\newtheorem{theorem}{Theorem}
\newtheorem{proposition}[theorem]{Proposition}
\newtheorem{lemma}[theorem]{Lemma}
\newtheorem{definition}[theorem]{Definition}
\newtheorem{conjecture}[theorem]{Conjecture}
\numberwithin{figure}{section}
\newtheorem{remark}[theorem]{Remark}
\newcommand{\CP}{\mathbb{CP}}
\newcommand{\CC}{\mathbb{C}}
\newcommand{\RR}{\mathbb{R}}
\newcommand{\U}{{\rm{U}}}
\newcommand{\z}{\mathbf{z}}
\numberwithin{equation}{section}
\numberwithin{theorem}{section}
\numberwithin{table}{section}
\numberwithin{table}{section}
\begin{document}
\bibliographystyle{amsalpha} 
\title{A sm\"org\r{a}sbord of scalar-flat K\"ahler ALE surfaces}
\author{Michael T. Lock}
\thanks{The first author was partially supported by NSF Grant DMS-1148490}
\address{Department of Mathematics, University of Texas, Austin, 
TX, 78712}
\email{mlock@math.utexas.edu}
\author{Jeff A. Viaclovsky}
\address{Department of Mathematics, University of Wisconsin, Madison, 
WI, 53706}
\email{jeffv@math.wisc.edu}
\thanks{The second author was partially supported by NSF Grant DMS-1405725}

\dedicatory{This article is dedicated to the memory of Egbert Brieskorn.}
\begin{abstract} There are many known examples of scalar-flat
K\"ahler ALE surfaces, all of which have 
group at infinity either cyclic or contained in ${\rm{SU}}(2)$. 
The main result in this paper shows that for {\textit{any}} non-cyclic finite subgroup
$\Gamma \subset \U(2)$ containing no complex reflections, there exist
scalar-flat K\"ahler ALE metrics on the minimal 
resolution of  $\CC^2 / \Gamma$, for which $\Gamma$ occurs as the 
group at infinity. Furthermore, we show that these metrics admit a holomorphic isometric circle action.
It is also shown that there exist scalar-flat K\"ahler ALE
metrics with respect to some small deformations of complex structure
of the minimal resolution. Lastly, we show the existence of  
extremal K\"ahler metrics admitting holomorphic isometric circle actions 
in certain K\"ahler classes on the complex analytic 
compactifications of the minimal resolutions. 
\end{abstract}
\maketitle
\section{Introduction}
\label{intro}
In order to state the main theorem, we begin with some 
preliminary definitions. Let $\Gamma \subset \U(2)$ be a finite subgroup. 
Then the following notions are equivalent. 
\begin{itemize}
\item $\Gamma$ contains no complex reflections.

\item
$\Gamma$ acts freely on $S^3$

\item
$X = \CC^2 / \Gamma$ has an isolated singularity at the origin.
\end{itemize}
While an in depth discussion of these groups is given in Section \ref{U(2)_subgroups}, for the sake of clarity here, we provide a 
list of all possible such groups below in Table~\ref{groups}.  However, before doing so, it is necessary to introduce some notation.  

\begin{itemize}
\item 
For $q$ and $p$ relatively prime integers, $L(q,p)$ denotes the cyclic subgroup of ${\rm{U}}(2)$ generated by
\begin{align*}
\left(
\begin{matrix}
\exp(2\pi i/p)&0\\
0&\exp(2\pi i q /p)
\end{matrix}
\right).
\end{align*}

\item
The binary polyhedral groups (dihedral, tetrahedral, octahedral, icosahedral) are respectively denoted by $D^*_{4n}$, $T^*$, $O^*$, $I^*$. 

\item 
The map $\phi:{\rm{SU}}(2) \times  {\rm{SU}}(2) \rightarrow {\rm SO}(4)$ denotes the usual double cover (see
\eqref{phi} below).
\end{itemize}

\begin{table}[ht]
\caption{Finite subgroups of \U(2) containing no complex reflections}
\label{groups}
\begin{tabular}{ll l}
\hline
$\Gamma\subset{\rm U}(2)$ & Conditions & Order\\
\hline\hline
$L(-m,n)$   & $(m,n) = 1$ & $n$\\
$ \phi(L(1,2m) \times D^*_{4n}) $ &  $(m,2n) = 1$ & $4mn$\\
$ \phi(L(1,2m) \times T^*) $  & $(m,6) = 1$ & $24m$\\
$  \phi(L(1,2m) \times O^*) $ & $(m,6) = 1$ & $48m$\\
$  \phi(L(1,2m)\times I^*) $ & $(m,30) = 1$ & $120m$ \\
 Index--$2$ diagonal $\subset   \phi(L(1,4m)\times D^*_{4n})$&  $(m,2) = 2,(m,n)=1$& $4mn$\\
 Index--$3$ diagonal $\subset  \phi(L(1,6m) \times T^*)$  & $(m,6)=3$ & $24m$.\\
\hline
\end{tabular}
\end{table}

\begin{definition}
\label{minresdef}
{\em Let $\Gamma \subset \U(2)$ be a finite subgroup containing no complex reflections.  Then, a smooth complex surface $\tilde{X}$ is called a {\textit{minimal resolution}} 
of $\CC^2/\Gamma$ if there is a mapping $\pi : \tilde{X}\rightarrow \CC^2/\Gamma$ such 
that 
\begin{enumerate}
\item The restriction $\pi:\tilde{X} \setminus \pi^{-1}(0)\rightarrow \CC^2/\Gamma\setminus\{0\}$ is a biholomorphism;
\item $\pi^{-1}(0)$ is a divisor in $\tilde{X}$ containing no $-1$ curves.
\end{enumerate}}
\end{definition}
These resolutions are {\textit{minimal}} in the sense that, given any other resolution $\pi_Y :Y \rightarrow \CC^2/\Gamma$, there is a proper analytic map $p:Y \rightarrow \tilde{X}$ such that $\pi_Y= \pi\circ p$.  For each such $\Gamma$, up to isomorphism there exists a unique such minimal resolution, and in 1968, Brieskorn completely described these resolutions 
complex-analytically \cite{Brieskorn}.  The exceptional divisor consists of
a tree of rational curves with normal crossing singularities. 
For each non-cyclic subgroup in Table \ref{groups}, it turns out that the exceptional divisor contains a distinguished rational curve 
which intersects exactly three other rational curves. This curve will be referred to as 
the {\textit{central rational curve}}. The self-intersection number of this 
curve will be denoted $-b_{\Gamma}$. The signature $\tau(\tilde{X})$ is 
given by $-k_{\Gamma}$, where $k_{\Gamma}$ is the total number of rational 
curves in the exceptional divisor. The exceptional divisor is described by a
tree with three branches attached to the central rational curve, and each branch
corresponds to a Hirzebruch-Jung string. We will denote the corresponding graph 
by
\begin{align}
\label{bnot}
\langle -b_{\Gamma}; (\alpha_1, \beta_1) ;(\alpha_2, \beta_2); (\alpha_3, \beta_3) \rangle,
\end{align}
where $\alpha_i$ and $\beta_i$, for $i = 1, 2, 3$,
are integers determined by the group $\Gamma$,  see Section~\ref{quotients_g_LB}. 

For historical context it is important to note the work of Seifert \cite{Seifert}, on three-dimensional fibered spaces, and Seifert-Threlfall \cite{Threlfall-Seifert_1,Threlfall-Seifert_2}, on three-dimensional spherical space forms, which was essential to that of Brieskorn.  Also, for a further exposition on quotient singularities of a complex surface, we refer the reader to \cite{Behnke-Riemenschneider}.

We are interested in certain metric 
structures on these spaces. Let $(M,g,J)$ be a K\"ahler manifold with K\"ahler form 
$\omega = g(J \cdot, \cdot)$.  Critical points of 
the functional 
\begin{align}
\alpha \mapsto \int_M R_{\alpha}^2 dV_{\alpha},
\end{align}
where $\alpha$ is a K\"ahler form in the cohomology class $[\omega]$ 
and $R_{\alpha}$ is the scalar curvature of the metric $g_{\alpha}$ determined by $\alpha$,
are called {\textit{extremal K\"ahler metrics}}.  These were introduced 
by Calabi in \cite{Calabi1, Calabi2}. The Euler-Lagrange 
equation for a critical point is given by 
\begin{align}
\label{E-L_extremal}
\overline{\partial} \partial_g^{\#}R_{\alpha} = 0,
\end{align}
where the operator $\partial_g^{\#}$ denotes the $(1,0)$ part of the
gradient of a function with respect to the metric $g_{\alpha}$. In other words, the 
$(1,0)$ part of the gradient of the scalar curvature is a 
holomorphic vector field. 

In particular, note that scalar-flat 
K\"ahler metrics are extremal. A very important class of scalar-flat
K\"ahler metrics are those of the following type.
\begin{definition}
\label{ALEdef}
{\em
 A complete Riemannian manifold $(X^4,g)$ 
is called {\textit{asymptotically locally 
Euclidean}} ({\textit{ALE}}) of order $\tau$ if 
there exists a finite subgroup 
$\Gamma \subset {\rm{SO}}(4)$ 
which acts freely on $S^3$, a compact subset $K\subset X$, and a diffeomorphism
\begin{align} 
\Psi : X \setminus K \rightarrow ( \mathbb{R}^4 \setminus B(0,R)) / \Gamma,
\end{align}
such that under this identification 
\begin{align}
(\Psi_* g)_{ij} &= \delta_{ij} + O( \rho^{-\tau}),\\
\ \partial^{|k|} (\Psi_*g)_{ij} &= O(\rho^{-\tau - k }),
\end{align}
for any partial derivative of order $k$, as
$\rho \rightarrow \infty$, where $\rho$ is the distance to some fixed basepoint.  We call $\Gamma$ the {\textit{group at infinity}}.  }
\end{definition}

We next review briefly the known examples of scalar-flat K\"ahler ALE surfaces:
\begin{itemize}
\item
In \cite{LeBrunnegative}, for all positive integers $n$, LeBrun constructed a ${\rm U}(2)$-invariant scalar-flat K\"ahler ALE metric on the total space of the bundle $\mathcal{O}(-n)$ over $\CP^1$, the minimal resolution of $\CC^2/L(1,n)$.  The $n=1$ and $n=2$ cases are the well-known Burns and Eguchi-Hanson metrics respectively \cite{Burns, EguchiHanson}. 

\item
The cases in Table \ref{groups} where $m= 1$ are the ADE-type singularities, and 
hyperk\"ahler metrics on these minimal resolutions were produced and classified by Kronheimer using the hyperk\"ahler quotient construction \cite{Kronheimer, Kronheimer2}.  These
generalize earlier examples of Eguchi-Hanson and Gibbons-Hawking, see \cite{EguchiHanson,GibbonsHawking} respectively.

\item Using a construction of Joyce, examples of scalar-flat K\"ahler ALE metrics 
with a torus action for the general cyclic case $L(m,n)$ were given by 
Calderbank-Singer \cite{CalderbankSinger}. 
\end{itemize}

Our main result shows that {\textit{any}} allowed group may 
occur as the group at infinity of a scalar-flat K\"ahler ALE surface:
\begin{theorem}
\label{t1}
Let $\Gamma \subset \U(2)$ be a finite subgroup containing no complex
reflections. 
Then the minimal resolution of $\CC^2 / \Gamma$ admits 
scalar-flat K\"ahler ALE metrics. Furthermore, these metrics 
admit a holomorphic isometric circle action. 
\end{theorem}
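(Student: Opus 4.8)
The plan is to construct the desired metrics by a Poincaré-type reduction using the circle action, combined with a gluing/matching argument at infinity. First I would observe that each $\Gamma$ in Table~\ref{groups} sits in the standard maximal torus times a binary polyhedral (or dihedral) factor, so the minimal resolution $\tilde X$ carries a natural holomorphic $\mathbb{C}^*$-action whose generic orbits are the orbits of the torus factor; this is the action we wish to realize isometrically. Passing to the quotient of $\tilde X\setminus(\text{fixed locus})$ by the circle action produces a (real) three-dimensional base equipped with a natural conformal structure, and the scalar-flat K\"ahler equation for a metric with such a symmetry reduces—this is the Joyce/LeBrun/Calderbank--Singer dimensional-reduction mechanism—to a linear axially-symmetric equation on an open region of $\mathbb{R}^3$ (equivalently, to finding a suitable harmonic-type potential on a half-space with prescribed boundary behavior along the axis). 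The combinatorial data $\langle -b_\Gamma;(\alpha_1,\beta_1);(\alpha_2,\beta_2);(\alpha_3,\beta_3)\rangle$ of the exceptional divisor, established complex-analytically by Brieskorn, dictates exactly where point charges (monopoles) must be placed along the axis so that the resulting quotient metric closes up smoothly across each rational curve of the exceptional divisor and across the central curve with its triple intersection.

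Concretely, the steps would be: (1) In Section~\ref{U(2)_subgroups}/\ref{quotients_g_LB}, record for each $\Gamma$ the precise $\mathbb{C}^*$-action on $\tilde X$ and verify that its quotient, away from the fixed points, is the complement of a polygonal configuration in $\mathbb{R}^3$ (or a quotient thereof by a residual finite group coming from the non-abelian factor). (2) Write the scalar-flat K\"ahler ansatz in LeBrun's form—a K\"ahler metric of the shape $\omega = \tfrac{1}{2}d d^c$ of a function depending on the moment-map coordinate and a harmonic variable—so that the equation becomes the SU($\infty$)-Toda / continuity equation, which in the circle-invariant case linearizes to the axisymmetric Laplace equation $u_{rr}+r^{-1}u_r+u_{zz}=0$ with a finite sum of fundamental solutions. (3) Choose the positions and weights of these fundamental solutions to match the intersection matrix $\langle -b_\Gamma;(\alpha_i,\beta_i)\rangle$; this is the heart of the argument and amounts to solving a system of arithmetic constraints whose solvability follows from the structure of Hirzebruch--Jung continued fractions attached to each branch. (4) Check the ALE asymptotics: as one moves to infinity along the base, the potential approaches the flat-model potential with the correct decay $O(\rho^{-\tau})$, so the metric is ALE with group at infinity exactly $\Gamma$. (5) Verify smoothness across the fixed-point set and the exceptional divisor using the standard local models for such Gibbons--Hawking-type ends, and finally observe that the circle action is holomorphic and isometric by construction.

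The main obstacle I anticipate is step (3): matching the freely-chosen monopole data to the rigid complex-analytic resolution graph with its central triple point. Unlike the cyclic case treated by Calderbank--Singer—where the base configuration is a single broken line and the continued-fraction bookkeeping is classical—here the presence of the central rational curve meeting three branches forces a genuinely two-dimensional (trivalent) configuration in the three-dimensional base, and one must arrange that the three Hirzebruch--Jung strings emanating from the central curve are simultaneously realized while keeping the global potential positive (so that the metric is nondegenerate) and while respecting the residual finite symmetry descending from the binary polyhedral factor. I expect this to require (a) a careful normalization of the coordinates adapted to the central curve, (b) an explicit choice of the angular arrangement of the three branches dictated by $b_\Gamma$, and (c) a maximum-principle or explicit-barrier argument guaranteeing positivity of the potential throughout. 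Once these are in place, the ALE decay and the smooth compactification across the divisor follow from the by-now-standard analysis of these $\mathbb{C}^*$-reduced scalar-flat K\"ahler metrics.
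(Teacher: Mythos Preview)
Your approach is fundamentally different from the paper's, and it has a genuine gap at exactly the point you flag as the ``main obstacle.''

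The paper does \emph{not} attempt a direct ansatz for an $S^1$-invariant scalar-flat K\"ahler metric on $\tilde X$. Instead it proceeds by gluing: it first takes the LeBrun negative mass metric on $\mathcal{O}(-2m)$ and quotients by the non-abelian factor $\Gamma'$ (Theorem~\ref{quotient_theorem}), producing a scalar-flat K\"ahler ALE \emph{orbifold} with the correct group $\Gamma$ at infinity and exactly three isolated cyclic singularities $L(\alpha_i,\beta_i)$ on the central rational curve. It then resolves each of these by gluing in a Calderbank--Singer space via an ALE adaptation of the Rollin--Singer Hermitian anti-self-dual gluing theorem (Theorem~\ref{SFK_gluing_theorem}), with Boyer's theorem promoting Hermitian ASD back to scalar-flat K\"ahler. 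The circle action is obtained simply by performing the gluing equivariantly. Thus the trivalent vertex is never handled by a global potential; it is handled by the base orbifold, and the three branches are supplied as separate toric bubbles.

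Your step (2)/(3) breaks down for two reasons. First, with only an $S^1$ symmetry the LeBrun reduction of the scalar-flat K\"ahler equation is the $SU(\infty)$ Toda equation, which is \emph{not} linear; the linear axially-symmetric Laplace equation arises in the Joyce/Calderbank--Singer picture only under a full $T^2$ action, and for non-cyclic $\Gamma$ the minimal resolution is not toric. Second, even within the Joyce framework, the combinatorics of the moment polytope force the exceptional divisor to be a \emph{chain} of rational curves (a Hirzebruch--Jung string); there is no placement of monopoles along a single axis that produces a trivalent tree, and your proposed ``angular arrangement of the three branches'' has no known realization in that ansatz. Indeed, Wright's classification (cited in the paper) says that the toric scalar-flat K\"ahler ALE minimal resolutions are exactly the Calderbank--Singer spaces, hence cyclic. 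So step (3) is not merely delicate---it cannot be carried out as stated, and the paper's gluing strategy is precisely what replaces it.
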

The proof of Theorem \ref{t1} is given in Section \ref{SFKsec}, 
and the basic idea is as follows.  First, we take quotients of the LeBrun negative mass metrics on $\mathcal{O}(-\ell)$ (for a certain value of $\ell$ 
determined by $\Gamma$) by certain finite 
subgroups of ${\rm{U}}(2)$.  This results in scalar-flat K\"ahler ALE orbifolds which have only
isolated singularities of cyclic type.  Then, we resolve these singularities by adapting a gluing theorem for Hermitian anti-self-dual metrics, due to Rollin-Singer, to the situation where the base metric is an ALE orbifold (instead of a compact orbifold), and use this to attach appropriate Calderbank-Singer spaces.  
Finally, a theorem of Boyer shows that these Hermitian anti-self-dual metrics are in fact scalar-flat K\"ahler.
To find the holomorphic isometric circle action, we show that our approximate metrics 
can be chosen to have such an action, and then use an equivariant 
version of the gluing theorem. 
We note that this gluing procedure can be thought of as a metric version of 
Brieskorn's method of constructing the minimal resolution complex analytically. 

 We remark that, using a recent result of Apostolov-Rollin \cite{Apostolov-Rollin}, 
we can produce examples of scalar-flat K\"ahler ALE metrics with any cyclic group 
at infinity starting with only the canonical Bochner-K\"ahler metrics on certain weighted projective spaces. This is discussed and proved in Section \ref{BK_WPS} and 
Section \ref{cyclic_construction}. Consequently, the main result in 
Theorem \ref{t1} can be obtained using only these Bochner-K\"ahler metrics, 
without using any knowledge of the Calderbank-Singer construction. 
These Bochner-K\"ahler metrics can therefore be viewed as the fundamental 
building blocks of this entire construction.

\subsection{Deformations of complex structure}
In the cases where $\Gamma \subset {\rm{SU}}(2)$, 
the hyperk\"ahler quotient construction
produces hyperk\"ahler metrics for the minimal
resolution complex structure as well as for all small deformations of the minimal resolution complex structure.
In the case of the LeBrun negative mass metrics, Honda has shown that all small 
deformations of the complex structure on $\mathcal{O}(-n)$ also admit 
scalar-flat K\"ahler metrics \cite{HondaOn, Honda_2014}. 
For our next result, we 
obtain scalar-flat K\"ahler metrics for {\textit{some}} small deformations of the minimal resolution complex structure for the non-cyclic subgroups.
\begin{theorem}
\label{t2}
If $\Gamma \subset {\rm{U}}(2)$ is a non-cyclic finite subgroup containing no complex
reflections, then
some small deformations of the minimal resolution of $\CC^2 / \Gamma$ admit
scalar-flat K\"ahler ALE metrics. The space of these deformations 
has real dimension $2 ( b_{\Gamma} - 1)$, where $-b_{\Gamma}$ is the self-intersection
of the central rational curve. For each of these deformations of complex structure, 
there exist scalar-flat K\"ahler ALE metrics for an open set in the K\"ahler cone
of K\"ahler classes. Consequently, there is a family of scalar-flat K\"ahler ALE metrics of real dimension at least 
\begin{align}
\label{mdimform}
2(b_{\Gamma} -1) + k_{\Gamma},
\end{align}
where $k_{\Gamma}$ is
the total number of rational curves in the exceptional divisor. 
\end{theorem}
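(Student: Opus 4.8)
The strategy is to run the proof of Theorem~\ref{t1} in families, replacing the fixed LeBrun negative mass metric by Honda's deformation family as the basic building block. Recall from Section~\ref{SFKsec} that, for an appropriate integer $\ell$ and finite group $G\subset\U(2)$, one realizes the minimal resolution $\tilde{X}$ of $\CC^2/\Gamma$ by first descending the $\U(2)$-invariant LeBrun metric on $\mathcal{O}(-\ell)$ to a scalar-flat K\"ahler ALE orbifold $\mathcal{O}(-\ell)/G$ whose singular set is a finite collection of isolated cyclic quotient singularities lying on the image of the zero section, and then resolving these by gluing in Calderbank--Singer ALE spaces via the ALE-orbifold version of the Rollin--Singer theorem; the central rational curve of $\tilde{X}$, with self-intersection $-b_{\Gamma}$, is the proper transform of that image of the zero section, and the three Hirzebruch--Jung branches arise from resolving the singular points lying over the three exceptional orbits of $\bar{G}\subset{\rm{SO}}(3)$, the image of $G$ acting on the zero section. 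The first step is a deformation count: the space of infinitesimal deformations of the complex surface $\mathcal{O}(-\ell)$ has complex dimension $\ell-1$ and is carried entirely by $H^{1}(\CP^{1},\mathcal{O}(-\ell))$ (the higher direct images under the projection vanish because the fibers are affine), and one must show that its $G$-invariant part has complex dimension $b_{\Gamma}-1$. On the quotient side this amounts to identifying the versal deformation space of $\mathcal{O}(-\ell)/G$, which should match $H^{1}$ of the normal bundle of the central curve, $H^{1}(\CP^{1},\mathcal{O}(-b_{\Gamma}))\cong\CC^{b_{\Gamma}-1}$; concretely it is a Molien-series or orbifold Riemann--Roch computation for the $\bar{G}$-action on $H^{0}(\CP^{1},\mathcal{O}(\ell-2))$.

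The second step is to promote Honda's existence theorem to a $G$-equivariant statement: for each $G$-invariant small deformation of $\mathcal{O}(-\ell)$ there is a $G$-invariant scalar-flat K\"ahler ALE metric, necessarily with the same group at infinity since the deformation is compactly supported. Because the seed LeBrun metric is $\U(2)$-invariant, hence $G$-invariant, this should follow either by carrying out Honda's twistor and linearization argument in the $G$-equivariant category or by an averaging argument together with an equivariant implicit function theorem. One also needs that the isolated cyclic singular points of $\mathcal{O}(-\ell)/G$, together with their local analytic models, are unchanged under these $G$-invariant deformations -- the deformation classes being localized near the central curve and away from the singular locus -- so that passing to the quotient produces a $(b_{\Gamma}-1)$-complex-parameter family of scalar-flat K\"ahler ALE orbifolds, all with the same singularities as $\mathcal{O}(-\ell)/G$. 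The third step is to apply the ALE-orbifold gluing theorem uniformly over this family: the singular points do not move and the Calderbank--Singer pieces glued in are literally fixed, so only the uniformity in the deformation parameter of the relevant Fredholm and obstruction-vanishing conditions has to be checked, which follows by continuity, and the equivariance that yields the holomorphic isometric circle action in Theorem~\ref{t1} persists; the glued metrics are Hermitian anti-self-dual, hence scalar-flat K\"ahler by Boyer's theorem. The gluing carries $k_{\Gamma}$ real scale parameters -- one per exceptional rational curve -- which realize an open subset of the $k_{\Gamma}$-dimensional K\"ahler cone of each deformed minimal resolution, so for each of the $2(b_{\Gamma}-1)$ real deformation directions one obtains a $k_{\Gamma}$-dimensional family of scalar-flat K\"ahler ALE metrics, and hence a family of total real dimension at least $2(b_{\Gamma}-1)+k_{\Gamma}$. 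That the surfaces produced are genuine small deformations of $\tilde{X}$, with the $2(b_{\Gamma}-1)$ directions distinct, is seen by comparing the Kuranishi family of $\tilde{X}$ with those of its open pieces: the central-curve deformations inject into the deformation space of $\tilde{X}$, while the complementary directions -- carried by the Hirzebruch--Jung branches -- are precisely the ones this construction leaves untouched, which is why only \emph{some} small deformations are obtained.

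The step I expect to be the main obstacle is the equivariant deformation theory underlying the first two steps: pinning down the $G$-invariant subspace of Honda's deformation space and verifying that it has the asserted dimension $b_{\Gamma}-1$, and then producing $G$-invariant scalar-flat K\"ahler ALE metrics on the deformed surfaces so that the quotients are well defined as orbifolds -- this calls for an honest equivariant refinement of Honda's analysis rather than a black-box appeal to it. A secondary technical point is to confirm, against Brieskorn's explicit description of the resolution, that the isolated cyclic singularities of $\mathcal{O}(-\ell)/G$ persist with unchanged local models under the $G$-invariant deformations, so that the same fixed Calderbank--Singer resolutions can be glued in uniformly over the whole family.
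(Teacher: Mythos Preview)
Your overall architecture matches the paper's proof exactly: deform the LeBrun base $\Gamma'$-equivariantly via Honda's theory, pass to the quotient orbifold (whose singular types are unchanged since they are determined by the isotropy of the fixed $\Gamma'$-action), and rerun the gluing of Theorem~\ref{SFK_gluing_theorem}. You also correctly flag the equivariant dimension count as the crux.

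What the paper actually does at that crux is more specific than ``a Molien-series or orbifold Riemann--Roch computation.'' Honda identifies the complexified space of infinitesimal scalar-flat K\"ahler deformations of $(\mathcal{O}(-2m),g_{LB})$ as the $\U(2)$-representation $\rho\oplus\overline{\rho}$ with $\rho=S^{2m-2}(\CC^2)\otimes\det$, and the paper then computes the $\Gamma'$-invariant dimension by an explicit character sum. This is Proposition~\ref{sfk_deformation_proposition}: for each binary polyhedral $\Gamma'$ one decomposes by conjugacy classes, applies Eisenstein's identity $\sum_{j=1}^{n-1}\sin(2\pi kj/n)\cot(\pi j/n)=-2n((k/n))$ to evaluate the sums, and reduces to $2b_{\Gamma}-2$ via the closed form \eqref{b_Gamma}; the diagonal groups $\mathfrak{I}^2_{m,n}$ and $\mathfrak{I}^3_m$ require a separate element-by-element enumeration since their conjugacy classes are not tabulated. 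So your heuristic identification with $H^1(\CP^1,\mathcal{O}(-b_{\Gamma}))$ gives the right answer, but the paper's route goes through the representation $S^{2m-2}(\CC^2)\otimes\det$ rather than through the normal bundle of the central curve, and the verification is a case-by-case character calculation rather than a one-line Riemann--Roch. For the K\"ahler-cone statement, the paper does not argue via gluing parameters alone but instead uses invertibility of the linearized operator (established in the proof of Theorem~\ref{t1}) together with a LeBrun--Simanca type argument to get openness of the set of K\"ahler classes admitting scalar-flat K\"ahler metrics; transversality of the complex-structure deformations to the K\"ahler cone, giving \eqref{mdimform}, is deduced from Honda's observation that his deformations are traceless and do not include metric scalings.
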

In the cyclic case, however, it is not known 
whether there are scalar-flat K\"ahler ALE metrics for deformations 
of the minimal resolution complex structure, 
except of course when these overlap with the aforementioned 
cases (toric multi-Eguchi-Hanson metrics and LeBrun negative mass metrics).

In general, for the minimal resolution $\tilde{X}$ of $\mathbb{C}^2/ \Gamma$,  the sheaf cohomology group $H^1(\tilde{X},\Theta_{\tilde{X}})$ has dimension
\begin{align}
\label{dimform}
\dim_{\mathbb{C}}( H^1(\tilde{X}, \Theta_{\tilde{X}})) = \sum_{i =1}^{k_{\Gamma}} ( e_i -1),
\end{align}
where $-e_i$ is the self-intersection number of the $i$th rational curve. 
So the examples in Theorem \ref{t2} are only for deformations corresponding to a proper subspace of $H^1(\tilde{X}, \Theta_{\tilde{X}})$. However, there seems to be no obstruction for other
small deformations of complex structure to admit scalar-flat K\"ahler ALE metrics, so we make the following conjecture. 
\begin{conjecture} For any finite subgroup $\Gamma \subset {\rm{U}}(2)$ containing no complex
reflections, all small deformations of complex structure of the minimal resolution of $\CC^2/\Gamma$ admit scalar-flat K\"ahler ALE metrics.
\end{conjecture}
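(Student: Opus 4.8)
The plan is to upgrade the gluing construction behind Theorems~\ref{t1} and~\ref{t2} so that it realizes \emph{all} of $H^1(\tilde X, \Theta_{\tilde X})$ rather than only the $(b_\Gamma-1)$ complex-dimensional subspace it currently reaches. Recall from \eqref{dimform} that the versal deformation space of $\tilde X$ decomposes, roughly, into a contribution $e_i-1$ from each rational curve in the exceptional tree; the present constructions resolve $\CC^2/\Gamma$ by gluing a LeBrun (or Bochner--K\"ahler weighted-projective) orbifold piece along the central curve to Calderbank--Singer cyclic ALE pieces along the three Hirzebruch--Jung arms, and the deformations covered in Theorem~\ref{t2} appear to be exactly those absorbable into the central LeBrun piece via Honda's theorem \cite{HondaOn, Honda_2014} (whose count $b_\Gamma-1 = e_{\mathrm{central}}-1$ matches). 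The model to emulate is the case $\Gamma\subset{\rm SU}(2)$, where the hyperk\"ahler quotient \cite{Kronheimer} already produces metrics over the entire deformation space. So the first and decisive step is to show that each \emph{cyclic} building block, i.e. each Calderbank--Singer scalar-flat K\"ahler ALE surface $Z\to\CC^2/L(q,p)$, admits scalar-flat K\"ahler ALE metrics for all small deformations of its complex structure.

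For this I would run an anti-self-dual deformation argument in the spirit of Honda's analysis of $\mathcal{O}(-n)$. Scalar-flat K\"ahler is equivalent, in real dimension four with the complex orientation, to anti-self-dual Hermitian; by Boyer's theorem (already used for Theorem~\ref{t1}) it suffices to deform the ASD conformal structure and recover the K\"ahler condition afterwards. Via the twistor correspondence, the ASD deformation complex on a scalar-flat K\"ahler surface can be identified with a Dolbeault-type complex whose first cohomology surjects onto $H^1(Z,\Theta_Z)$ and whose second cohomology is the obstruction space. The key analytic input is a vanishing theorem for this obstruction group on the $Z$: one expects it because the $Z$ are rational (the exceptional set is a chain of $\CP^1$'s) and the obstruction group is built from $H^2$ of a sufficiently negative bundle, or dually from sections of $K_Z$ twisted down, which should vanish on an ALE surface with the standard asymptotics. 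Making this precise in weighted H\"older (or $b$-) spaces adapted to the ALE end, so that the implicit function theorem closes uniformly, is where the real work sits; granting it, the Kuranishi map for the ASD structure is unobstructed, its image is a smooth family surjecting onto the complex-structure deformations of $Z$, and each member carries a scalar-flat K\"ahler ALE metric.

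With every building block deformed, I would then re-run the equivariant Rollin--Singer-type gluing over ALE orbifolds exactly as in the proof of Theorem~\ref{t1}, but now as a \emph{multi-parameter} family: the conformal structures of the central piece and of the three cyclic arms each vary over their respective deformation spaces, and the neck parameters vary as well. This produces an approximately-ASD metric on the resolved space; the linearized gluing estimates, together with surjectivity of each piece's deformation map and a Mayer--Vietoris bookkeeping across the normal-crossing nodes, should show that the resulting family of genuine scalar-flat K\"ahler ALE metrics is parametrized by an open set in the full $H^1(\tilde X, \Theta_{\tilde X})$. Boyer's theorem again promotes ASD Hermitian to scalar-flat K\"ahler, and the holomorphic isometric circle action survives exactly as in Theorem~\ref{t1} for any $S^1$-invariant deformation.

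The principal obstacle is the first step: at present there is no description of the general (non-toric, non-hyperk\"ahler) cyclic scalar-flat K\"ahler ALE surfaces that makes their deformability manifest — the authors note this is unknown — so the ASD obstruction-vanishing on Calderbank--Singer spaces is the crux, and it is genuinely delicate because those metrics are only available through Joyce's torus-symmetric ansatz rather than in closed form, and the ALE weights must be tuned so no obstructions leak in from infinity. A secondary obstacle is the matching step: even with all pieces deformed, the glued complex structures could a priori miss the deformations ``concentrated at the nodes'' of the exceptional tree, so one needs the gluing-of-deformation-functors argument — essentially a long exact sequence for $\tilde X = (\text{central piece})\cup(\text{arms})$ — to certify that no such directions are lost.
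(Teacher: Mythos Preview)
The statement you are attempting is a \emph{conjecture}; the paper does not prove it. After stating it, the authors remark only that ``an approach similar to that of \cite{BiquardRollin}, in which one deforms the complex structure, is likely needed, although in this case one needs to moreover completely understand the scalar-flat K\"ahler deformations of the Calderbank--Singer metrics.'' That is the entirety of what the paper offers.

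Your proposal is therefore not competing with a proof but with a one-sentence research suggestion, and on that score you and the authors agree: you correctly isolate the crux as showing that each cyclic Calderbank--Singer building block admits scalar-flat K\"ahler ALE metrics for all small deformations of its complex structure, and you correctly flag this as the ``principal obstacle'' that is ``genuinely delicate.'' Your multi-parameter family gluing is a reasonable elaboration of how one would proceed once that obstacle is cleared, and it is in the spirit of what the paper hints at.

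But what you have written is a program, not a proof. The decisive step --- the ASD obstruction-vanishing on the Calderbank--Singer spaces in suitably weighted spaces, and the resulting unobstructed Kuranishi family surjecting onto $H^1(Z,\Theta_Z)$ --- is asserted (``one expects it because\ldots'') rather than established, and the paper explicitly records this as unknown outside the toric multi-Eguchi--Hanson and LeBrun negative-mass cases. Your secondary obstacle, that the glued family might miss deformation directions concentrated at the nodes of the tree, is also real and left open. So the proposal does not close the conjecture; it articulates, with some added detail on the gluing bookkeeping, the same program the authors already point toward.
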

To approach the conjecture, an approach similar to that of \cite{BiquardRollin}, 
in which one deforms the complex structure, is likely needed, although in this 
case one needs to moreover completely understand the 
scalar-flat K\"ahler deformations of the Calderbank-Singer metrics. 

\subsection{The hyperk\"ahler case}
We point out that for any non-cyclic $\Gamma \subset {\rm{SU}}(2)$, our construction does actually 
yield hyperk\"ahler metrics with group $\Gamma$ at infinity, see Proposition \ref{Kronheimer_sfk_gluing}. Thus, a corollary is that for any such subgroup, {\textit{some}} 
hyperk\"ahler metrics can be obtained by gluing techniques. 
Furthermore, the only ingredients needed in this case are the Gibbons-Hawking multi-Eguchi-Hanson
metrics. 

As we will see below in Section~\ref{hyperkahler}, using a gluing theorem for anti-self-dual metrics 
(instead of the Hermitian anti-self-dual gluing theorem), we can actually show that {\textit{all}} 
small deformations also admit hyperk\"ahler metrics, and hence 
obtain an open set in the moduli space. Once again, these metrics
are known by Kronheimer's construction, but we find it interesting that
they can be obtained by gluing techniques, using only the 
multi-Eguchi-Hanson metrics as building blocks. 

\subsection{Extremal K\"ahler metrics on rational surfaces} 
 In  \cite[Lemma 4.1]{LeBrunMaskit}, 
LeBrun-Maskit showed that any scalar-flat K\"ahler ALE space
admits a complex analytic compactification to a rational surface 
by adding a tree of rational curves at infinity.
In Section \ref{ca_compactifications}, we identify the surfaces obtained from 
compactifying the spaces obtained in Theorem \ref{t1}. More precisely, we
will show that a compactification can be obtained by
adding the tree of rational curves
\begin{align}
\label{bnot2}
\langle b_{\Gamma}'; (\beta_i - \alpha_1, \beta_1) ;(\beta_2 - \alpha_2, \beta_2); 
(\beta_3 - \alpha_3, \beta_3) \rangle,
\end{align}
where $b_{\Gamma}' > 0$. While the existence of such compactifiations is known, for the sake of completeness, we provide a short proof using weighted projective spaces.
Here, we will let $k_i$ denote the length of the Hirzebruch-Jung algorithm for $L(\alpha_i,\beta_i)$
and $\ell_i$ denote the length of the Hirzebruch-Jung algorithm for $L(\beta_i - \alpha_i,\beta_i)$
for $i = 1, 2, 3$  
(see Section \ref{building_blocks} for a description of this algorithm). 
\begin{proposition} 
\label{cac1intro}
Let $\Gamma \subset \U(2)$ be a non-cyclic finite subgroup containing no complex
reflections. 
Then the minimal resolution of $\CC^2 / \Gamma$ has a complex 
analytic compactification diffeomorphic to $\CP^2 \# \mathfrak{k} \overline{\CP}^2$,
where $\mathfrak{k} = k_1 + k_2 + k_3 + \ell_1 + \ell_2 + \ell_3 +1$.
\end{proposition}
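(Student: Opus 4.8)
The plan is to construct the compactification explicitly using weighted projective spaces and then compute the topology by keeping careful track of blow-ups. First I would recall from Brieskorn's description that the minimal resolution $\tilde{X}$ of $\CC^2/\Gamma$ has exceptional divisor given by the graph $\langle -b_\Gamma; (\alpha_1,\beta_1);(\alpha_2,\beta_2);(\alpha_3,\beta_3)\rangle$. The quotient $\CC^2/\Gamma$ itself sits inside a weighted projective space $\CP^2_{w}$ for a suitable weight vector $w$: indeed a non-cyclic $\Gamma \subset \U(2)$ can be realized so that $\CC^2/\Gamma$ is a coordinate chart of $\CP^2_{w}$, and the points at infinity of that chart are two cyclic quotient singularities of types $L(\beta_i - \alpha_i, \beta_i)$, $i = 2,3$ say, together with the behavior along the line at infinity which contributes the $(\beta_1 - \alpha_1, \beta_1)$ branch and the central curve. (This is exactly the form of the tree \eqref{bnot2}.) The minimal resolution of this compactified weighted projective space is then a smooth rational surface $Z$ containing $\tilde{X}$ as a Zariski-open subset, and $Z \setminus \tilde{X}$ is the tree \eqref{bnot2}. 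This is the compactification asserted before the statement; since $b_\Gamma' > 0$ and the branch strings have positive self-intersections, by the argument of \cite[Lemma 4.1]{LeBrunMaskit} this is a complex-analytic compactification of the scalar-flat K\"ahler ALE space.

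Next I would identify $Z$ up to diffeomorphism. The key fact is that every smooth rational surface is obtained from $\CP^2$ or a Hirzebruch surface $\mathbb{F}_n$ by a sequence of blow-ups, and that blowing up a point changes the diffeomorphism type by connect-summing with one copy of $\overline{\CP}^2$; moreover $\mathbb{F}_n$ is diffeomorphic to $\CP^1 \times \CP^1$ for $n$ even and to $\CP^2 \# \overline{\CP}^2$ for $n$ odd, and $\CP^1\times\CP^1$ is not diffeomorphic to $\CP^2 \# \overline{\CP}^2$ but both have $b_2 = 2$; the relevant invariant is the parity of the intersection form, equivalently whether $Z$ is spin. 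Since $Z$ contains a $(-1)$-curve or a curve of odd self-intersection (for instance the central curve has self-intersection $-b_\Gamma$ with $b_\Gamma \geq 2$, but more to the point the HJ strings always produce $(-2)$ or lower curves adjacent to curves forcing non-spin structure), $Z$ is non-spin, hence $Z \cong \CP^2 \# \mathfrak{k}\,\overline{\CP}^2$ for the appropriate $\mathfrak{k} = b_2(Z) - 1$.

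It then remains to count $b_2(Z)$. Here I would start from a minimal model: the compactified weighted projective space $\CP^2_w$ has a minimal resolution whose second Betti number I can read off from the fan/toric data, and then count how $\tilde{X}$ sits inside it. More concretely, $b_2(Z) = b_2(\tilde{X}) + \#\{\text{curves in the tree at infinity}\}$, because $Z$ is the union of $\tilde{X}$ and the tree \eqref{bnot2} glued along the ALE end. The exceptional divisor of $\tilde{X}$ has $k_\Gamma = 1 + k_1 + k_2 + k_3$ curves (the central curve plus the three HJ strings of lengths $k_i$), so $b_2(\tilde{X}) = k_\Gamma$. The tree at infinity \eqref{bnot2} has one central curve of self-intersection $b_\Gamma'$ plus three HJ strings for $L(\beta_i - \alpha_i, \beta_i)$ of lengths $\ell_i$, contributing $1 + \ell_1 + \ell_2 + \ell_3$ curves. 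Hence
\begin{align}
b_2(Z) = k_\Gamma + 1 + \ell_1 + \ell_2 + \ell_3 = (1 + k_1 + k_2 + k_3) + 1 + \ell_1 + \ell_2 + \ell_3 = \mathfrak{k} + 1,
\end{align}
with $\mathfrak{k} = k_1 + k_2 + k_3 + \ell_1 + \ell_2 + \ell_3 + 1$ as claimed, giving $Z \cong \CP^2 \# \mathfrak{k}\,\overline{\CP}^2$.

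The main obstacle I anticipate is verifying cleanly that the weighted-projective-space compactification really does produce a tree of the form \eqref{bnot2} with $b_\Gamma' > 0$ and the stated branch types — that is, matching the Hirzebruch--Jung data of the singularities at infinity of $\CP^2_w$ with the combinatorics $L(\beta_i - \alpha_i, \beta_i)$ — and, relatedly, confirming the simple connectivity of $Z$ (so that "diffeomorphic to a blow-up of $\CP^2$" follows from Freedman/the classification of rational surfaces rather than merely "homeomorphic"). Simple connectivity follows because $Z$ is a smooth rational surface, hence simply connected, but one should check the ALE end does not introduce fundamental group — it does not, since $\tilde{X}$ is simply connected (it retracts onto the simply connected exceptional divisor tree) and the tree at infinity is also simply connected, so van Kampen gives $\pi_1(Z) = 1$. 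Once the divisor combinatorics are pinned down, the Betti number count and the non-spin identification are routine.
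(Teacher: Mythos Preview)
Your overall strategy---construct an explicit compactification, show it is rational, then count $b_2$---is the same as the paper's, and your Betti-number count and the deduction $Z\cong\CP^2\#\mathfrak{k}\,\overline{\CP}^2$ are fine (indeed more explicit than the paper's terse conclusion). However, your construction of the compactification contains a genuine error.

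You assert that for non-cyclic $\Gamma$ one can choose a weight $w$ so that $\CC^2/\Gamma$ is a coordinate chart of the weighted projective space $\CP^2_w$. This is false: the coordinate charts of any $\CP^2_{(r,q,p)}$ are of the form $\CC^2/(\text{cyclic group})$, so every orbifold singularity of a weighted projective surface is cyclic. For a non-cyclic $\Gamma$ there is no weighted projective space whose chart is $\CC^2/\Gamma$, and so your sentence ``the points at infinity of that chart are two cyclic quotient singularities\dots'' cannot be made to work as stated. You correctly flagged the divisor-matching step as the main obstacle, but the obstruction is stronger than a combinatorial check: the compactification you describe does not exist.

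The paper fixes this by working one level up. One takes the specific weighted projective space $\CP^2_{(1,1,2m)}$ (on which $\U(2)$ acts holomorphically) and forms the \emph{quotient} $\CP^2_{(1,1,2m)}/\Gamma'$. This is a compact orbifold with four singular points: one at $[0,0,1]$ of type $\Gamma$, and three on the sphere $\Sigma=\{[z_0,z_1,0]\}$ of cyclic types $L(\beta_i-\alpha_i,\beta_i)$, these last being the orientation-reversed conjugates of the types found in Theorem~\ref{quotient_theorem}. Resolving the three cyclic points on $\Sigma$ produces exactly the tree \eqref{bnot2} with $b_{\Gamma}'>0$, and the complement of $\Sigma$ in $\CP^2_{(1,1,2m)}/\Gamma'$ is biholomorphic to $\CC^2/\Gamma$; resolving the $\Gamma$-point gives the minimal resolution. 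Rationality then follows because the compactification is birational to a finite quotient of the rational surface $\CP^2_{(1,1,2m)}$. From there your $b_2$ count applies verbatim; note that since $\mathfrak{k}\geq 7$ in every case, the possibility $S^2\times S^2$ is excluded on Betti-number grounds alone, so you need not invoke a non-spin argument (your sketch of which was in any case inconclusive, since $(-2)$-curves have even self-intersection).
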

The complete list of rational curves in the compactification is given by
\begin{align}
&[b_{\Gamma}'], [-b_{\Gamma}],\\
&[-b^i_\ell],  i = 1, 2, 3, \ \ell = 1 \dots \ell_i\\
&[-e^i_k],  i = 1,2,3, \ k = 1 \dots k_i,
\end{align}
where notation $[q]$ means that $q$ is the self-intersection 
number of the corresponding rational curve. 

Next, we have an existence result for extremal K\"ahler metrics 
in certain K\"ahler classes on the complex analytic compactifications
found in Proposition \ref{cac1intro}.
\begin{theorem}
\label{cacintro}
There exists an $\varepsilon_0 > 0$ such that for 
$\varepsilon < \varepsilon_0$, and for constants
\begin{align*}
&a_1 > 0,  a_2 > 0,\\
&a^i_{\ell} > 0, i = 1, 2, 3, \ \ell = 1 \dots \ell_i,\\  
&c^i_{k} > 0 , i = 1,2,3, \ k = 1 \dots k_i,
\end{align*}
the compactification in Proposition \ref{cac1intro} admits extremal K\"ahler metrics
with non-constant scalar curvature in the 
K\"ahler class
\begin{align*}
a_1 [b'] + a_2 [-b] 
+ \sum_{i=1}^3 \sum_{\ell = 1}^{\ell_i} f(i, \varepsilon) a^i_{\ell}[b^i_{\ell}]
+ \sum_{i=1}^3 \sum_{k = 1}^{k_i} f'(i, \varepsilon)c^i_{k} [e^i_{k}],
\end{align*}
where
\begin{align*}
f(i, \varepsilon)=
\begin{cases}
\varepsilon^2 & \alpha_i \neq 1 \\
\varepsilon^4 &  \alpha_i = 1,
\end{cases}
\phantom{=} \mbox{and} \phantom{=}f'(i, \varepsilon)=
\begin{cases}
\varepsilon^2 & \alpha_i \neq \beta_i- 1 \\
\varepsilon^4 & \alpha_i = \beta_i - 1.
\end{cases}
\end{align*}
\end{theorem}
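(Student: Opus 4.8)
The plan is to construct the extremal K\"ahler metrics on the compactification by a gluing argument, using as input the scalar-flat K\"ahler ALE metrics produced in Theorem~\ref{t1} together with explicit extremal (indeed, Bochner-K\"ahler) metrics on weighted projective spaces. First I would fix the complex analytic compactification $\hat{X}$ of the minimal resolution $\tilde{X}$ of $\CC^2/\Gamma$ given by Proposition~\ref{cac1intro}, realized concretely inside an appropriate weighted projective space, so that the curve $[b_\Gamma']$ at infinity and the Hirzebruch-Jung strings $[-b^i_\ell]$ appear as (proper transforms of) coordinate loci. On such a weighted projective space, or more precisely on an orbifold compactification obtained by contracting the resolution strings, one has the canonical Bochner-K\"ahler metrics of \cite{Apostolov-Rollin} (discussed in Section~\ref{BK_WPS}); these are extremal with non-constant scalar curvature whenever they are not actually constant-scalar-curvature, and they carry a holomorphic isometric circle (indeed torus) action. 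The idea is then to resolve the cyclic orbifold points of this compact object by gluing in the Calderbank-Singer ALE spaces scaled by a small parameter, exactly as in the ALE construction of Theorem~\ref{t1}, but now in the compact setting where the relevant gluing theorem is the original Rollin-Singer theorem \cite[as adapted in Section~\ref{SFKsec}]{} rather than its ALE modification.

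The key steps, in order, are: (1) identify $\hat{X}$ with a resolution of a compact orbifold $\hat{X}_0$ (a weighted projective space or a blowup thereof) carrying an extremal K\"ahler orbifold metric $g_0$ with a holomorphic isometric circle action and with non-constant scalar curvature; (2) verify that the K\"ahler class $[g_0]$ is, up to scaling the exceptional classes, the $\varepsilon \to 0$ limit of the class in the statement, with the distinct powers $f(i,\varepsilon), f'(i,\varepsilon)$ accounting for the difference between an $A_{n}$-type ($\alpha_i = 1$ or $\alpha_i = \beta_i - 1$) string, which contributes at order $\varepsilon^4$, and a genuinely cyclic-quotient string, which contributes at order $\varepsilon^2$ --- this bookkeeping mirrors the weight computations in the Hirzebruch-Jung algorithm of Section~\ref{building_blocks} and the scaling behavior of the Calderbank-Singer building blocks; (3) apply the equivariant version of the Rollin-Singer gluing theorem to attach, at each orbifold point, the appropriate scaled Calderbank-Singer (or Eguchi-Hanson, in the ADE case) space, producing for small $\varepsilon$ a genuine K\"ahler metric on $\hat{X}$ in the advertised class whose scalar curvature is a small perturbation of that of $g_0$; (4) invoke the deformation-to-extremal argument --- the relative Futaki invariant / Calabi functional machinery in the presence of the circle action --- to conclude that the glued metric can be perturbed to a genuine extremal K\"ahler metric in the same class, with non-constant scalar curvature inherited from $g_0$ since the perturbation is $C^\infty$-small.

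The main obstacle, as in all such gluing constructions, is step~(3)--(4): one must check that the linearized operator governing the extremal (or scalar-flat, Hermitian anti-self-dual) equation on the approximate solution has a right inverse with norm bounded independently of $\varepsilon$, which requires ruling out small eigenvalues coming both from the neck regions and from holomorphic vector fields / obstructions on the pieces. Here the circle action is essential: it lets us work equivariantly and thereby kill the cokernel contributions that would otherwise obstruct the gluing, and it is also what guarantees the limiting metric's nontrivial extremal vector field survives. A secondary subtlety is that the target is noncompact only in disguise --- the compactification is genuinely compact, so unlike Theorem~\ref{t1} we do not need the ALE modification of the analysis, but we do need the orbifold Rollin-Singer theorem to apply to $\hat{X}_0$, whose orbifold locus consists precisely of the isolated cyclic points dictated by $\Gamma$ and the three Hirzebruch-Jung strings; verifying that these points are of the type handled by \cite{Apostolov-Rollin} and by the gluing theorem is the technical heart of the argument. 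I expect the sign conditions $a_1, a_2, a^i_\ell, c^i_k > 0$ and $b_\Gamma' > 0$ to be exactly what is needed to keep the class K\"ahler throughout the deformation and to place it in the region where \cite{Apostolov-Rollin} supplies a non-CSC extremal representative on the orbifold.
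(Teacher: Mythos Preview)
Your overall architecture is close in spirit, but two concrete choices diverge from the paper's proof in ways that matter, and one of them is a genuine gap.

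First, the base orbifold: the paper does \emph{not} start from a Bochner--K\"ahler metric on a weighted projective space. It starts from Calabi's extremal K\"ahler metrics on the Hirzebruch surface $\mathbb{F}_{2m}$, then takes the quotient by $\Gamma$. This yields an extremal K\"ahler orbifold with six cyclic singularities, three of type $L(\alpha_i,\beta_i)$ on the quotient of the $[-2m]$ curve and three of type $L(\beta_i-\alpha_i,\beta_i)$ on the quotient of the $[+2m]$ curve. The point of using the Calabi family is that the areas of the $[+2m]$ and $[-2m]$ curves can be chosen \emph{independently}, which is exactly what gives you arbitrary $a_1>0$ and $a_2>0$ in the K\"ahler class. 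If you start instead from the Bochner--K\"ahler metric on $\CP^2_{(1,1,2m)}$ and then resolve the singularity at $[0,0,1]$ with the ALE space from Theorem~\ref{t1}, the $[-b_\Gamma]$ curve is forced to have small area, so you only get the theorem with $a_2$ small. The paper explicitly remarks on this (the Calabi metrics degenerate to the Bochner--K\"ahler metric as the $[-2m]$ area shrinks), so your approach would prove a strictly weaker statement.

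Second, and more seriously, the gluing theorem you invoke is the wrong one. Rollin--Singer (and the adaptation in Section~\ref{SFK_gluing}) glues \emph{Hermitian anti-self-dual} metrics; the output is scalar-flat K\"ahler, not extremal with non-constant scalar curvature. To resolve the singularities of an extremal K\"ahler orbifold and obtain an extremal K\"ahler metric on the resolution, the paper applies the extremal gluing theorems of Arezzo--Lena--Mazzieri and Arezzo--Pacard--Singer. The obstruction analysis is then immediate: the identity component of the isometry group of $(\mathbb{F}_{2m},g_C)/\Gamma$ is exactly $S^1$, so the Lie algebra of Hamiltonian holomorphic vector fields is one-dimensional and the balancing conditions are automatically met. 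Your step (4), a separate ``deformation-to-extremal'' argument after producing a K\"ahler metric by Rollin--Singer, is not how this works and would not yield the result.

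Finally, your explanation of the $\varepsilon^2$ versus $\varepsilon^4$ scaling is slightly off. It is not about the combinatorics of the Hirzebruch--Jung string; it is analytic. A general scalar-flat K\"ahler ALE surface is ALE of order $2$, but a Ricci-flat one is ALE of order $4$. The Calderbank--Singer bubble attached at an $L(\alpha_i,\beta_i)$ point is Ricci-flat precisely when $\alpha_i=\beta_i-1$ (multi-Eguchi--Hanson), and the one attached at an $L(\beta_i-\alpha_i,\beta_i)$ point is Ricci-flat precisely when $\alpha_i=1$; in those cases the gluing requires the $\varepsilon^4$ scaling. Non-constancy of the scalar curvature of the final extremal metric follows simply because the Calabi metric on $\mathbb{F}_{2m}$ already has non-constant scalar curvature and the glued metric is a small perturbation of it away from the singularities.
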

The idea of the proof of Theorem \ref{cacintro} is similar to 
that of Theorem \ref{t1}, but 
instead relies on gluing theorems for extremal K\"ahler metrics 
due to Arezzo-Lena-Mazzieri \cite{ALM_resolution} and Arezzo-Pacard-Singer \cite{APSinger}, see also \cite{API, APII, GaborI, GaborII,ALM_kummer}.
The introduction of the functions $f$ and $f'$ is necessary 
because if one of the bubbles is Ricci-flat, then it must 
be glued on with a different scaling. This happens because 
in general scalar-flat K\"ahler ALE surfaces are ALE of
order $2$ \cite{AV12, Streets}, but if such a space is moreover Ricci-flat, then 
it is ALE of order $4$ \cite{BKN, ct}.

\begin{remark}{\em
 The case with minimal $\mathfrak{k}$ is the case of $\Gamma = D^*_8$ (the quaternion group), for which Theorem 
\ref{cacintro} produces extremal K\"ahler metrics on $\CP^2 \# 7 \overline{\CP}^2$.
The next smallest cases are $\Gamma = D^*_{12}$ or $\Gamma = \mathcal{I}^2_{2,3}$, 
which yield examples on $\CP^2 \# 8 \overline{\CP}^2$.
}
\end{remark}
We refer the reader to \cite{ChiLi, ConlonHeinIII, Suvaina} for other results and examples 
regarding complex analytic compactifications.

We conclude this section with a corollary of our work here combined with that of Arezzo-Lena-Mazzieri \cite{ALM_resolution}.  In a sense, the following is a more generalized and less specific version of Theorem \ref{cacintro}.
\begin{theorem}
Let $(M,g)$ be a compact extremal K\"ahler orbifold surface.  Then the minimal resolution $\widehat{M}$ of $M$ admits extremal K\"ahler metrics in certain K\"ahler classes.  
\end{theorem}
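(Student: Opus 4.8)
The plan is to derive the theorem from the resolution gluing theorem of Arezzo--Lena--Mazzieri \cite{ALM_resolution}, using Theorem \ref{t1} to supply the ALE pieces. Let $p_1, \dots, p_n$ be the orbifold points of $M$; we work in the setting relevant to the notion of minimal resolution used here, namely that a punctured neighborhood of each $p_j$ is biholomorphic to a punctured neighborhood of the origin in $\CC^2/\Gamma_j$ for a finite subgroup $\Gamma_j \subset \U(2)$ acting freely on $S^3$, so that $M$ has isolated orbifold singularities and $\widehat M$ is obtained by excising these neighborhoods and gluing in neighborhoods of the exceptional divisors of the minimal resolutions $\widetilde{\CC^2/\Gamma_j}$ of Definition \ref{minresdef}.

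First we assemble the ALE building blocks. By the classification recalled in Table \ref{groups}, each $\Gamma_j$ is either cyclic or one of the six non-cyclic families, and in every case Theorem \ref{t1} furnishes $\widetilde{\CC^2/\Gamma_j}$ with a scalar-flat K\"ahler ALE metric admitting a holomorphic isometric circle action. Being scalar-flat K\"ahler, such a metric is extremal with identically vanishing scalar curvature --- exactly the input demanded by \cite{ALM_resolution}. As noted in the discussion preceding Theorem \ref{cacintro}, such a metric is ALE of order $2$ \cite{AV12, Streets}, and of order $4$ precisely when it is Ricci-flat, i.e.\ when $\Gamma_j \subset {\rm{SU}}(2)$ \cite{BKN, ct}.

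Next we invoke \cite{ALM_resolution}. From the compact extremal K\"ahler orbifold $(M,g,J)$ and the ALE models above, one builds, for all small $\varepsilon > 0$, an approximately extremal K\"ahler metric on $\widehat M$ by removing a small ball around each $p_j$ and gluing in the corresponding ALE model rescaled by $\varepsilon$, raising the scaling exponent for any Ricci-flat bubble exactly as the functions $f, f'$ of Theorem \ref{cacintro} dictate. The resulting K\"ahler class is a perturbation of $\pi^*[\omega_g]$ by a small class supported near the exceptional divisors, of size $\varepsilon^2$ (respectively $\varepsilon^4$ next to a Ricci-flat bubble). One then corrects this approximate metric to a genuine extremal K\"ahler metric by a Lyapunov--Schmidt reduction, which succeeds once a finite-dimensional balancing condition --- involving the holomorphic vector fields on $M$ together with the configuration of the points $p_j$ and their gluing weights --- is met. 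It is here that one uses the freedom to vary the gluing parameters and the K\"ahler class (and, if needed, to move the $p_j$ by an automorphism of $M$, and to glue equivariantly with respect to the circle actions from Theorem \ref{t1}); this flexibility, rather than the full K\"ahler cone, accounts for the qualifier ``in certain K\"ahler classes.''

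The step I expect to be the main obstacle is verifying that the nondegeneracy and balancing hypotheses of \cite{ALM_resolution} can indeed be met for an arbitrary compact extremal K\"ahler orbifold surface, in particular one with a large group of holomorphic automorphisms --- equivalently, that the reduced finite-dimensional equation always admits a solution after the adjustments just listed. In the extremal (rather than constant-scalar-curvature) category, and with scalar-flat K\"ahler ALE bubbles as the gluing data, this is exactly the situation the Arezzo--Lena--Mazzieri machinery is designed to handle, so beyond checking that all local models of $M$ fall under Theorem \ref{t1} and quoting \cite{ALM_resolution} with due care for the Ricci-flat scaling, no genuinely new analysis should be required.
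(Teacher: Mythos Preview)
Your approach is correct and is precisely the paper's: apply \cite[Theorem~1.1]{ALM_resolution} with the scalar-flat K\"ahler ALE metrics of Theorem~\ref{t1} as the local models at each singular point. Your worry about balancing is unnecessary here --- the Arezzo--Lena--Mazzieri resolution theorem is formulated in the extremal (not CSCK) category and applies to an arbitrary compact extremal K\"ahler orbifold without further genericity hypotheses, which is why the paper simply cites it and stops.
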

The proof follows directly from using \cite[Theorem 1.1]{ALM_resolution}, together with the scalar-flat K\"ahler ALE metrics from Theorem \ref{t1} above.   See \cite{ALM_resolution} for a precise description of these K\"ahler classes, analogous to that of Theorem \ref{cacintro}.

\subsection{Acknowledgements}
The authors would like to thank Nobuhiro Honda and Claude LeBrun for many 
enlightening discussions on scalar-flat K\"ahler metrics. 
We would also like to thank Claudio Arezzo, David Calderbank, and Michael Singer for 
many helpful comments.  Also, discussions with Akira Fujiki, 
Ryushi Goto, and Hans-Joachim Hein  
on deformations of complex structure were very helpful.  Finally, thanks are given to the anonymous referees for numerous suggestions to improve the exposition of the paper.

\section{Building blocks}
\label{building_blocks}

Scalar-flat K\"ahler ALE metrics with cyclic groups at infinity are essential to the work of this paper because, in a sense, they are the ``building blocks'' of our construction.  We introduce several important families here.  
In Section \ref{LB_metrics} and Section~\ref{CS_metrics}, we provide brief descriptions of the LeBrun negative mass metrics and the Calderbank-Singer metrics respectively.  The former family of metrics is, in fact, contained in the latter, however we introduce and discuss these negative mass metrics separately, as well as outline their original construction, because it will be necessary to understand the action of the isometry group on these spaces.  Also, in Subsection~\ref{CS_metrics}, we discuss the topology of the minimal resolutions of cyclic quotients of $\CC^2$ on which these metrics lie.  In Section \ref{M-E-H_metrics}, we introduce the Gibbons-Hawking multi-Eguchi-Hanson metrics.  The subclass of these given by those with a toric symmetry is actually contained in the family of Calderbank-Singer metrics, so their introduction is not strictly necessary.  However, we discuss them separately in light of the results concerning the Kronheimer hyperk\"ahler metrics described in Section~\ref{hyperkahler} below.  Finally, in Section \ref{BK_WPS}, we introduce the canonical Bochner-K\"ahler metrics on weighted projective spaces, as well as a conformal relationship between these and scalar-flat K\"ahler ALE orbifolds.  These metrics on weighted projective spaces will be seen to be the ``fundamental'' building blocks of our construction; in Section \ref{cyclic_construction} below, we will show how using only these, it is possible to obtain examples of the other building blocks discussed in Sections~\ref{LB_metrics}--\ref{M-E-H_metrics}.

\subsection{LeBrun negative mass metrics}
\label{LB_metrics}
In \cite{LeBrunnegative}, for all positive integers $n$, LeBrun constructed a ${\rm U}(2)$-invariant scalar-flat K\"ahler ALE metric on the total space of the bundle $\mathcal{O}(-n)$ over $\CP^1$, the minimal resolution of $\CC^2/L(1,n)$.  The $n=1$ and $n=2$ cases are the well-known Burns and Eguchi-Hanson metrics respectively \cite{Burns, EguchiHanson}.  When $n>2$, LeBrun showed that these metrics have negative mass thus providing an infinite family of counterexamples to the generalized positive action conjecture \cite{HawkingPope}.  We next
briefly describe the method of construction from \cite{LeBrunnegative}.

Since any 
K\"ahler metric satisfies $R\omega\wedge\omega=\rho\wedge\omega$, where $R$ is the scalar curvature and $\rho$
is the Ricci form, he examined the equation
\begin{align}
\label{R=0}
0=\rho\wedge\omega.
\end{align}
Restricting to those metrics on $\CC^2\setminus\{0\}$ which result from a ${\rm U}(2)$-invariant K\"ahler potential,
i.e. those having K\"ahler forms $\omega=\sqrt{-1}\partial\bar{\partial}\phi$ with potential functions $\phi(\z)$, where $\z=|z_1|^2+|z_2|^2$, on
$\CC^2\setminus\{0\}$, LeBrun extracted a nonlinear ODE from this which he then solved to obtain a family of ${\rm U}(2)$-invariant potential functions $\{\phi_{n}(\z)\}_{n\in\mathbb{Z}^+}$ for which \eqref{R=0} is satisfied.
For each $\phi_{n}(\z)$,  after defining a new radial coordinate 
 \begin{align}
\label{r-coordinate}
 r=\sqrt{\z\frac{\partial\phi_{n}}{\partial \z}},
 \end{align}
 LeBrun showed that the corresponding metric is
\begin{align}
\label{negativemassmetric}
g_{LB}=\frac{dr^2}{1+\frac{n-2}{r^2}+\frac{1-n}{r^4}}+r^2\Big[\sigma_1^2+\sigma_2^2+\Big(1+\frac{n-2}{r^2}+\frac{1-n}{r^4}\Big)\sigma_3^2\Big],
\end{align}
where $r$ is the radial distance from the origin and $\sigma_1,\sigma_2, \sigma_3$ are the usual left-invariant coframe on ${\rm SU}(2)=S^3$.
This metric is scalar-flat K\"ahler, hence anti-self-dual, but has an apparently singularity at $r=1$.  
However, redefine the radial coordinate as $\tilde{r}^2~=~(r^2-1)$ and attaching a $\CP^1$ at $\tilde{r}=0$, and then take the $\mathbb{Z}_n$ quotient of $\CC^2\setminus \{0\}$ generated by
\begin{align}
\label{Znaction}
(z_0,z_1)\mapsto(e^{2\pi i/n}z_0,e^{2\pi i/n}z_1).
\end{align}
The metric now extends smoothly over the $\CP^1$ at $\tilde{r}=0$, and therefore $g_{LB}$ defines a ${\rm{U}}(2)$-invariant scalar-flat K\"ahler ALE  metric on the total space of the bundle $\mathcal{O}(-n)$ over $\CP^1$.  Clearly, the group at infinity is $L(1,n)$.
 
\subsection{Calderbank-Singer metrics}
\label{CS_metrics}
For every pair of relatively prime integers $1\leq q<p$, Calderbank-Singer constructed a family of scalar-flat K\"ahler, hence anti-self-dual, ALE metrics on the minimal resolution of $\CC^2/L(q,p)$, the topology of which is discussed below, \cite{CalderbankSinger}.  For $1<q<p$ these metrics are toric and come in families of dimension $k-1$.  For $q=1$ and $q=p-1$, these metrics are the LeBrun negative mass metrics
and the toric multi-Eguchi-Hanson metrics respectively, see Remark \ref{toric_GH}.

They used the 
Joyce ansatz \cite{Joyce1995} to find these metrics explicitly, the idea of which is as follows.  Let $(B,h)$ be a spin $2$-manifold with constant curvature $-1$, $W\rightarrow B$ be the spin-bundle with the induced metric (denoted by $h$ as well), and $V$ be a two-dimensional vector space with symplectic structure $\varepsilon(\cdot, \cdot)$.  Joyce defined a $V$-invariant metric on the product bundle $B\times V$ associated to each bundle isomorphism $\Phi:W\rightarrow B\times V$.  First, he defines a family of metrics on $V$ by
\begin{align}
(v,\tilde{v})_{\Phi}=h(\Phi^{-1}(v),\Phi^{-1}(\tilde{v})).
\end{align}
Notice that this family is parameterized by $B$.  Then, he defines a metric on the total space by
\begin{align}
g_{\Phi}=\Omega^2(h+(\cdot, \cdot)_{\Phi}),
\end{align}
for a nonvanishing conformal factor $\Omega\in C^{\infty}(B)$.  From the construction $g_{\Phi}$ is clearly invariant under the action of $V$ on $B\times V$, so it descends to the quotient $B\times (V/\Lambda)$ where $\Lambda\subset V$ is any lattice.  Joyce then showed that this metric is (anti-)self-dual if $\Phi$ satisfies a certain linear differential equation.  Calderbank-Singer considered $B\subset \mathcal{H}^2$, for which Joyce showed that there are
scalar-flat K\"ahler representative in the conformal class of each
 $g_{\Phi}$, and studied the solutions of the Joyce equations.  
Combining this with work on toric $4$-manifolds, see \cite{OrlikRaymond}, Calderbank-Singer explicitly completed their construction.

\begin{remark}
{\em
In \cite{Wright}, Wright showed that if $(X,g)$ is a toric scalar-flat K\"ahler ALE space with group at infinity $\Gamma$ satisfying $\chi_{orb}(X)>-2/|\Gamma|$, where $\chi_{orb}$ is the orbifold Euler characteristic (see \eqref{chi_orb}), then it is isometric to a Calderbank-Singer metric on the minimal resolution of $\CC^2/\Gamma$.  Clearly this inequality is satisfied whenever the underlying space is such a minimal resolution, so the Calderbank-Singer spaces are a complete list of the {\em{toric}} scalar-flat K\"ahler ALE metrics on minimal resolutions.
}
\end{remark}

The minimal resolutions of cyclic quotients of $\CC^2$ are classical \cite{Hirzebruch1953}.  In order to describe these, we introduce the following modified Euclidean algorithm.  For relatively prime integers $1 \leq q < p$, write
\begin{align}
\begin{split}
\label{modified_EA}
p&=e_1q-a_1\\
q&=e_2a_1-a_2\\
&\hspace{2mm} \vdots \\
a_{k-3}&=e_{k-1}a_{k-2}-1\\
a_{k-2} &= e_k a_{k-1} = e_k,
\end{split}
\end{align}
where $e_i \geq 2$, and $0 \leq a_i < a_{i-1} < \dots < a_1 < q,$ for $i = 1, \cdots ,k$.  We refer to the integer $k$ as the {\textit{length}} of the modified Euclidean algorithm. This can 
also be written as the continued fraction expansion
\begin{align}
\frac{q}{p} = \cfrac{1}{e_1 - \cfrac{1}{e_2 - \cdots \cfrac{1}{e_k}}}.
\end{align}
where the $e_i$ and $k$ are as in \eqref{modified_EA}.  

Now,  let $\tilde{X}$ be the minimal resolution 
of $\CC^2/ L(q,p)$, where $1\leq q<p$ are relatively prime integers.
The intersection matrix of the exceptional divisor $\tilde{X}$ is shown in 
Figure \ref{cycfig}, where the $e_i$ and $k$ are defined above with $e_i \geq 2$.

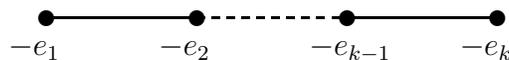
\begin{figure}[h]
\setlength{\unitlength}{2cm}
\begin{picture}(7,0.1)(-1,0)
\linethickness{.3mm}
\put(1,0){\line(1,0){1}}
\put(1,0){\circle*{.1}}
\put(.75,-.25){$-e_1$}
\put(2,0){\circle*{.1}}
\put(1.75, -.25){$-e_2$}
\multiput(2,0)(0.1,0){10}
{\line(1,0){0.05}}
\put(3,0){\circle*{.1}}
\put(2.75, -.25){$-e_{k-1}$}
\put(3,0){\line(1,0){1}}
\put(4,0){\circle*{.1}}
\put(3.75, -.25){$-e_k$}
\end{picture}
\vspace{5mm}
\label{cycfig}
\caption{Exceptional divisor in the Hirzebruch-Jung resolution.}
\end{figure}
\begin{remark}{\em
For a group $L(q,p)$, where $q$ and $p$ are relatively prime integers with $1 \leq q \leq p-1$, it will frequently be necessary to consider the length of this modified Euclidean algorithm, which we will denote by $k_{L(q,p)}$.
}
\end{remark}

\subsection{Multi-Eguchi-Hanson metrics}
\label{M-E-H_metrics}
In \cite{GibbonsHawking}, for all positive integers $n$, Gibbons and Hawking constructed hyperk\"ahler ALE metrics with cyclic group action $L(-1,n)$ at infinity.  
These are known as multi-Eguchi-Hanson metrics, and a brief description of the construction is as follows.  

Choose $n$ distinct (monopole) points $P=\{p_1,\cdots, p_n\}$ in $(\RR^3,g_{Euc}),$ and let $G_{p_i}$ be the fundamental solution of the Laplacian based at $p_i$ with normalization so that $\Delta G_{p_i}=2\pi \delta_{p_i}$.  Then, consider the function $V=\frac{1}{2}\sum_{i=1}^nG_{p_i}$.  Since this 
is harmonic on $\RR^3\setminus P$, the $2$-form $*dV$ is closed here and $\frac{1}{2\pi}[*dV]\in H^2(\RR^2\setminus P,\mathbb{Z})$.  Let $X_0\rightarrow \RR^3\setminus P$ be the unique  principal ${\rm U}(1)$-bundle corresponding to this cohomology class.  By Chern-Weil theory there is a connection form $\omega$ on $X_0$ with curvature $d\omega=*dV$.  The Gibbons-Hawking metric is defined on $X_0$ by
\begin{align}
g_{GH}=Vg_{Euc}+V^{-1}\omega^2.
\end{align}
Now, define a larger manifold $X$ by attaching points $\tilde{p_i}$ over each $p_i$, over which $g_{GH}$ extends to a smooth Riemannian metric.  The space $(X,g_{GH})$ is hyperk\"ahler ALE with group at infinity $L(-1,n)$, and is clearly $S^1$-equivariant. 

\begin{remark}
\label{toric_GH}
{\em When all of the monopole points lie on a common line, these metrics are toric and, in fact, are exactly the Calderbank-Singer metrics for $q=p-1$.  This construction can also have a multiplicity at each point which results in orbifold metrics, see \cite{ViaclovskyFourier}.
}
\end{remark}

\subsection{Weighted projective spaces}
\label{BK_WPS}
Lastly, we introduce the canonical Bochner-K\"ahler metrics on weighted projective spaces.  In a sense, these will be our ``fundamental building blocks'' as we will eventually use them to obtain the necessary ``building blocks,'' discussed above, in a uniform way.
\begin{definition} {\em{ For relatively prime integers $1 \leq r \leq q \leq p$, 
the {\em{weighted projective space}} $\mathbb{CP}^2_{(r,q,p)}$ is $S^{5}/S^1$, 
where $S^1$ acts by
\begin{align*}
(z_0,z_1,z_2)\mapsto (e^{ir\theta}z_0,e^{iq\theta}z_1 ,e^{ip\theta}z_2),
\end{align*}
for $0\leq \theta <2\pi$.
}}
\end{definition}
Weighted projective spaces are complex orbifold surfaces.  For relatively prime positive integers $a$ and $b$, let $a^{-1;b}$ denote the inverse of $a$ modulo $b$, i.e., $1\leq a^{-1;b}<b$ and $a^{-1;b}a\equiv 1\mod b$.  With the standard orientation, $\mathbb{CP}^2_{(r,q,p)}$ has the following singular points and corresponding orbifold groups.
\begin{align}
\label{WPS_table}
\begin{array}{lll}
\hline
\text{Singularity} & &\text{Orbifold group} \\\hline\hline
\text{$[1,0,0]$}&\phantom{=}&L(q^{-1;r}p,r)\\
\text{$[0,1,0]$} &\phantom{=}&  L(p^{-1;q}r,q)\\
\text{$[0,0,1]$}&\phantom{=}& L(r^{-1;p}q,p)\\
\hline
\end{array}
\end{align}
Note that there can be one, two or three singular points depending on how many of the integers $r,q,p$ are strictly greater than $1$.

Bryant showed that every weighted projective space admits a Bochner-K\"ahler metric \cite{Bryant}.  Later, in \cite{DavidGauduchon}, David-Gauduchon gave a construction of such a metric on each weighted projective space and used an argument, due to Apostolov, to prove that it was in fact the unique Bochner-K\"ahler metrics on that space \cite[Appendix~D]{DavidGauduchon}.  Hence, we will refer to these metrics as the {\em{canonical Bochner-K\"ahler metrics}}, and denote them by $(\CP^2_{(r,q,p)},g_{BK})$.  On complex surfaces the Bochner tensor is exactly the anti-self-dual part of the Weyl curvature tensor, so these metrics are self-dual K\"ahler metrics.

In  Section \ref{cyclic_construction} below, we will use the canonical Bochner-K\"ahler metrics on certain weighted projective spaces, along with conformal results of \cite{Joyce1991,DabkowskiLock,Apostolov-Rollin} and Theorem \ref{SFK_gluing_theorem}, the scalar-flat K\"ahler ALE gluing theorem, to obtain examples of scalar-flat K\"ahler ALE metrics on minimal resolutions of cyclic quotient singularities in a uniform inductive manner.  Subsequently, we will not only prove the existence of scalar-flat K\"ahler ALE metrics on every possible minimal resolution, but see that the only building blocks needed are the canonical Bochner-K\"ahler metrics on certain weighted projective spaces.

We conclude this section with a discussion of the aforementioned conformal results
of \cite{Joyce1991,DabkowskiLock,Apostolov-Rollin}.  However, before detailing these, it is first necessary to discuss a conformal relationship between orbifolds an ALE spaces.  Given an ALE space $(X,g)$, choose a conformal factor $u : X \rightarrow \RR_+$ 
such that $u = O(\rho^{-2})$ as $\rho \rightarrow \infty$. 
The space $(X, u^2 g)$ then compactifies to a
$C^{1,\alpha}$-orbifold.  This is known as a {\textit{conformal compactification}}. In the anti-self-dual case, there moreover exists a $C^{\infty}$-orbifold conformal compactification with positive Yamabe invariant, which we denote by $(\widehat{X}, \widehat{g})$, see
\cite{TV2, ChenLeBrunWeber}. 
(Note that this compactification is quite different than the 
complex analytic compactifications considered below in Section \ref{ca_compactifications}.) 
On the other hand, given a compact Riemannian orbifold $(\widehat{X},\widehat{g})$ with nonnegative Yamabe invariant, and letting $G_p$ denote the Green's function of the conformal Laplacian associated to a point $p$, the space $(\widehat{X}\setminus\{p\},G_p^2\widehat{g})$ is a complete noncompact scalar-flat anti-self-dual ALE orbifold.  A coordinate system at infinity arises from using inverted normal coordinates in the metric $\widehat{g}$ around $p$.  We refer to this as a {\textit{conformal blow-up}}.

\begin{remark}
\label{compactification_groups}
{\em 
If $(X,g)$ is an oriented ALE space with group at infinity $\Gamma \subset {\rm{SO}}(4)$, then the 
conformal compactification $(\widehat{X},\widehat{g})$ has the 
same group action at the orbifold point, provided we {\textit{reverse}} the
orientation,
}
\end{remark}

In \cite{Joyce1991}, Joyce showed that the LeBrun negative mass metric $(\mathcal{O}(-n),g_{LB})$ is the conformal blow-up of $(\CP^2_{(1,1,n)},g_{BK})$.  
Later, the first author and Dabkowski found the explicit conformal equivalence between these in \cite{DabkowskiLock}.  While these metrics are K\"ahler, it is important to note that they are K\"ahler with respect to reverse oriented complex structures. 

The key result we will need is the following, recently proved by Apostolov-Rollin 
in \cite{Apostolov-Rollin}.  On $(\CP^2_{(r,q,p)},g_{BK})$, with singular points and orbifold groups as given in \eqref{WPS_table}, let $G_{z}$ denote the Green's function of the conformal Laplacian associated to any one of these singular points $z$.  Then, the space $(\CP^2_{(r,q,p)}\setminus\{z\},G^2_{z} \cdot g_{BK})$, with reversed orientation, is a scalar-flat K\"ahler ALE orbifold with group at infinity the same as the group of the blown-up orbifold point, and two singular points with orbifold groups that are orientation-reversed conjugate to their compactified counterparts. 

\section{Group actions}
\label{group_actions}

\subsection{Quaternions and $S^3$}
\label{quaternions_S3}
It is our convention to identify $\CC^2$ with the space of quaternions, $\mathbb{H}=\{x_0+x_1\hat{i}+x_2\hat{j}+x_3\hat{k}\}$,
by having $(z_1,z_2)\in\CC^2$ correspond to $z_1+z_2\hat{j}\in\mathbb{H}$.  Identifying $S^3$ with the space of unit quaternions in the obvious way, it is easy to see that it acts on itself isometrically, in an orientation preserving way, by both left and right quaternionic multiplication.  
It is well known that the map $\phi:S^3\times S^3\rightarrow {\rm SO}(4)$ defined by
\begin{align}
\label{phi}
\phi(q_1,q_2)(h)=q_1*h*\bar{q}_2,
\end{align}
for $h\in S^3$, is a double cover of ${\rm SO}(4)$.  Clearly, the kernel of $\phi$ is $\{(1,1),(-1,-1)\}$, hence $S^3\times S^3/(-1,-1)={\rm SO}(4)$.  
The restriction of $\phi$ to the diagonal subgroup acts isometrically on the space of purely imaginary unit quaternions, which is identified with $S^2$, and therefore induces the double cover $\psi:S^3\rightarrow {\rm SO}(3)$ defined by
\begin{align}
\label{Psi_SO(3)}
\psi(q)(h)=q*h*\bar{q},
\end{align}
for $h\in S^2$.
For the remainder of this work, in a slight abuse of notation, we will write the elements $\phi(\alpha,\beta)\in{\rm SO}(4)$ as the pair $[\alpha,\beta]$ where $\alpha,\beta\in S^3$, with the action given by $[\alpha,\beta](h)=\alpha*h*\bar{\beta}$ and composition by $[\alpha_2,\beta_2]\circ[\alpha_1,\beta_1]=[\alpha_2*\alpha_1,\beta_1*\beta_2]$.  

Now, we briefly introduce the Hopf fibration and examine its behavior under left and right quaternionic multiplication.  Writing $S^3=\{(z_1,z_2)\in \CC^2: |z_1|^2+|z_2|^2=1\}$ and $S^2=\CC \cup \{\infty\}$, the Hopf map $\mathcal{H}:S^3\rightarrow S^2$ is given by
\begin{align}
\label{Hopf_map}
\mathcal{H}(z_1,z_2)=z_1/z_2.
\end{align}
Observe, from this, that the typical fiber over $w\in S^2=\CC\cup \{\infty\}$ is the circle
\begin{align}
\frac{e^{i\theta}}{\sqrt{|w|^2+1}}(w,1)=\frac{e^{i\theta}}{\sqrt{|w|^2+1}}(w+\hat{j})\text{$\in S^3$, where $0\leq \theta<2\pi$.}
\end{align} 
Let $h=h_1+h_2\hat{j}\in S^3$.  First, by left multiplication we have
\begin{align}
\label{Hopf_rotation}
(h_1+h_2\hat{j})*\frac{e^{i\theta}}{\sqrt{|w|^2+1}}(w+\hat{j})=\frac{e^{i\theta}h_1}{\sqrt{|w|^2+1}}(w+\hat{j})+\frac{e^{-i\theta}h_2}{\sqrt{|w|^2+1}}(-1+\bar{w}\hat{j}),
\end{align}
so we see that the Hopf fibration is preserved if and only if $h_1$ or $h_2$ is zero.  In the case that $h=h_1$, not only is the Hopf fibration preserved, but clearly the action is just rotation in the Hopf fiber.  When $h=h_2\hat{j}$, observe that the entire circle is mapped to the Hopf fiber over the point $-1/\bar{w}\in S^2$.
Next, by right multiplication we have
\begin{align}
\label{Hopf_SU(2)}
\frac{e^{i\theta}}{\sqrt{|w|^2+1}}(w+\hat{j})*(h_1+h_2\hat{j})=\frac{e^{i\theta}}{\sqrt{|w|^2+1}}(h_1w-\bar{h}_2,h_2w+\bar{h}_1).
\end{align}
This is the Hopf fiber over the point $(h_1w-\bar{h}_2)/(h_2w+\bar{h}_1)\in S^2=\CC\cup \{\infty\}$.  Therefore, we see that right quaternionic multiplication always preserves the Hopf fibration.

\subsection{Finite subgroups of ${\rm U}(2)$ containing no complex reflections}
\label{U(2)_subgroups}
Recall the following short exact sequence of Lie groups
\begin{align}
1\longrightarrow {\rm SU}(2)\overset{i}{\longrightarrow}{\rm U}(2)\overset{det}{\longrightarrow} {\rm U}(1)\longrightarrow 1.
\end{align}
Since this splits, we see that ${\rm U}(2)={\rm U}(1)\ltimes {\rm SU}(2)$; the semidirect product can be seen more explicitly in the double cover
\begin{align}
\label{U(2)_cover}
\phi:S^1\times S^3\rightarrow {\rm U}(2)
\end{align}
obtained by restricting the map $\phi$, from \eqref{phi}, to $S^1\times S^3$ for the particular $S^1={\rm U}(1)$ given by the unit quaternions $\exp(i\theta)\in S^3$ for $0\leq \theta<2\pi$, see \cite{Coxeter, Crowe, DuVal}. 

First, we consider the finite subgroups of ${\rm SU}(2)$.
There is an isomorphism between $S^3$ and ${\rm SU}(2)$ via right quaternionic multiplication as follows.  Let $h_1+h_2\hat{j}\in S^3$, then 
\begin{align}
\label{SU(2)_identify}
(z_1+z_2\hat{j})*(h_1+h_2\hat{j})\longleftrightarrow 
\left(
\begin{matrix}
h_1& -\bar{h}_2\\
h_2 & \bar{h}_1
\end{matrix}
\right)
\left(
\begin{matrix}
z_1\\
z_2
\end{matrix}
\right).
\end{align}
Notice, from \eqref{Hopf_SU(2)}, that ${\rm SU}(2)$ preserves the Hopf fibration.
The finite subgroups of ${\rm SU}(2)$ were determined in \cite{Coxeter_1940}.  They can be classified by the simply laced affine Dynkin of ADE-type by the McKay correspondence \cite{McKay}.  The quotient of $\CC^2$ by a finite subgroup of ${\rm SU}(2)$ yields a {\textit{Du Val singularity}} at the origin whose minimal resolution is a tree of smooth rational curves with intersection matrix equivalent to the Cartan matrix of a Dynkin diagram of ADE-type.  There has been much exposition on the  finite subgroups of ${\rm SU}(2)$, so here we provide a brief summary and refer the reader to \cite{Stekolshchik} for more details. 

The finite subgroups of ${\rm SU}(2)$ are the cyclic and binary polyhedral groups.  For each integer $n\geq 2$, right quaternionic multiplication by $\exp(2\pi i/n)$ generates a cyclic group of order $n$, which we denote by $L(-1,n)$.  This is of ADE-type A$_{n-1}$.
By \eqref{SU(2)_identify}, this generator corresponds to the matrix
\begin{align}
\label{L(-1,n)}
\left(
\begin{matrix}
\exp(2\pi i/n)&0\\
0&\exp(-2\pi i/n)
\end{matrix}
\right)
\left(
\begin{matrix}
z_1\\
z_2
\end{matrix}
\right)
\end{align}
acting on $\CC^2$.
A quaternionic representation of the binary polyhedral groups first arose in Coxeter's classification of the finite subgroups of the multiplicative group of quaternions \cite{Coxeter_1940}.
In considering a natural generalization of Hamilton's formula's for the quaternions group, for all triples of integers $a,b,c$, where $2\leq a\leq b\leq c$, he introduced a group defined by
\begin{align}
\label{binary_presentation}
\langle a,b,c\rangle:=\langle R, S, T |R^a=S^b=T^c=RST\rangle,
\end{align}
and proved that it is finite in exactly the following cases
\begin{align}
\label{a,b,c}
a,b,c=\begin{cases}
2,2,n\phantom{=}\text{for all $n\in\mathbb{Z}^+$}\\
2,3,3\\
2,3,4\\
2,3,5.
\end{cases}
\end{align}
Coxeter then showed that these are in fact the binary polyhedral groups,
and proceeded to find generators for each as a set of unit quaternions acting by right quaternionic multiplication.  However, there was a mistake in his work for the binary icosahedral group which Lam later fixed \cite{Lam}.  In Table \ref{SU(2)_table} we provide a list of these groups along with their orders and ADE-types.  The reader will find a list of generators by setting $m=1$ in the first four cases of Table \ref{U(2)_table}.

\begin{table}[ht]
\label{SU(2)_table}
\caption{}
\begin{center}
\renewcommand\arraystretch{1.4}
\begin{tabular}{lllll}
\hline
$\langle a,b,c\rangle$ & Binary polyhedral group & & Order &  ADE-type\\\hline\hline
$\langle2,2,n\rangle$& Binary dihedral group  &$D^*_{4n}\phantom{=}$& $4n$   & $D_{n+2}$ \\
$\langle2,3,3\rangle$&  Binary tetrahedral group &   $T^*\phantom{=}$&$24$    & $E_6$ \\
$\langle2,3,4\rangle$&  Binary octahedral group  &   $O^*\phantom{=}$& $48$    &$E_7$\\
$\langle2,3,5\rangle$& Binary icosahedral group  &  $I^*\phantom{=}$& $120$    & $E_8$ \\\hline
\end{tabular}
\end{center}
\end{table}

Since $RST$ is in the center of $\langle a,b,c\rangle$, Coxeter considered the group quotient by the cyclic subgroup generated by $RST$, which clearly has the presentation
\begin{align}
\label{regular_presentation}
(a,b,c):=\langle r, s, t |r^a=s^b=t^c=rst=1\rangle.
\end{align}
This presentation, for $a,b,c$ corresponding to the binary polyhedral groups, is that of the polyhedral groups in ${\rm SO}(3)$; the symmetry groups of the regular $n$-gon, tetrahedron, octahedron and icosahedron.  
Coxeter showed that $RST=-Id$ in these cases, hence $(a,b,c)$ here is a $\mathbb{Z}_2$ quotient.  Equivalently that each binary dihedral groups is a double cover of the corresponding polyhedral group.
The polyhedral groups act as orientation preserving isometries on $S^2$, and it is well known the quotient of $S^2$ by one of these groups results in an orbifold with three singular points of orders $a,b,c$ for the corresponding polyhedral group.  This is expressed in Table \ref{polyhedral_group}

\begin{table}[ht]
\label{polyhedral_group}
\caption{}
\begin{center}
\renewcommand\arraystretch{1.4}
\begin{tabular}{ llll}
\hline
$(a,b,c)$ & Polyhedral group && Order\\\hline\hline
$(2,2,n)$ & Dihedral group &$D_{2n}\phantom{=}$ & $2n$\\
$(2,3,3)$ & Tetrahedral group &$T\phantom{=}$ & $12$\\
$(2,3,4)$ & Octahedral group &$O\phantom{=}$ &  $24$\\
$(2,3,5)$ &Icosahedral group &$I\phantom{=}$ &$60$\\\hline
\end{tabular}
\end{center}
\end{table}

One can see more clearly how the binary polyhedral groups cover the polyhedral groups by examining
${\rm SU}(2)$-actions under the Hopf map \eqref{Hopf_map}.  Consider an arbitrary element $\gamma=[\exp(2\pi i/m),h_1+h_2\hat{j}]\in{\rm U}(2)$.  Then
\begin{align}
\begin{split}
\label{Hopf_S2_action}
\mathcal{H}(\gamma(z_1,z_2))&=\mathcal{H}\big(e^{2\pi i/m}(h_1\cdot z_1-\bar{h}_2\cdot z_2),e^{2\pi i/m}(h_2\cdot z_1+\bar{h}_1\cdot z_2)\big)\\
&=\frac{h_1\cdot z_1-\bar{h}_2\cdot z_2}{h_2\cdot z_1+\bar{h}_1\cdot z_2}\in S^2=\CC\cup \{\infty\},
\end{split}
\end{align}
so we see that $\gamma$ descends to a Mobius transformation.  Notice that left multiplication by the quaternion $\exp(2\pi i/m)$ cancels out under the Hopf map, which is intuitively sensible since that corresponds to rotation of the Hopf fiber.  It will later be important to understand this for a general ${\rm U}(2)$-action.

\begin{remark}
{\em
Since $RST=-Id$ for each binary polyhedral group, they can be presented in terms of just the generators $S$ and $T$ as
\begin{align}
\label{two_generators}
\langle S, T |(ST)^2=S^b=T^c\rangle,
\end{align}
where $b,c$ are as in $\langle a,b,c\rangle$.
\em}
\end{remark}

Heuristically, from \eqref{U(2)_cover}, one sees that any finite subgroup of ${\rm U}(2)$ will be generated by some combination of a quaternion of the form $\exp(2\pi i/m)$ acting on the left, a finite set of unit quaternions acting on the right, and products thereof.

A classification of the finite subgroups of ${\rm U}(2)$, up to ${\rm U}(2)$-conjugacy, was given in \cite{DuVal, Coxeter}.  
However, here we are interested in those subgroups contain no complex reflections (those which act freely on $S^3$).  These were classified in \cite{Scott}.  Up to certain conditions, they are the image under $\phi$ of products of the cyclic group $L(1,2m)$ with the binary polyhedral groups, the index--$2$ diagonal subgroup of $\phi(L(1,4m)\times D^*_{4n})$ which we denote by $\mathfrak{I}^2_{m,n}$, the index--$3$ diagonal subgroup of $\phi(L(1,6m)\times T^*)$ which we denote by $\mathfrak{I}^3_m$, and the cyclic groups $L(q,p)$.
In Table~\ref{U(2)_table} we list these groups along with their respective conditions and generators written in the form of $[\alpha,\beta]$ for some unit quaternions $\alpha$, $\beta\in S^3$ to be consistent with the notation in Section \ref{quaternions_S3}.  
From \cite{Coxeter_1940, Lam}, a choice of generators in the case of the image under $\phi$ of the product of $L(1,2m)$ with a binary polyhedral group is clear.  However, a set of generators in terms of elements of $S^1\times S^3$ is not obvious for the subgroups $\mathfrak{I}^2_{m,n}$, $\mathfrak{I}^3_m$ and $L(q,p)$, and we give a proof for these cases below.
\begin{table}[ht]
\caption{Finite $\Gamma\subset{\rm U}(2)$ containing no complex reflections}
\label{U(2)_table}
\renewcommand\arraystretch{1.4}
\begin{center}
\begin{tabular}{llll}
\hline
$\Gamma\subset {\rm U}(2)$ & Order & Generators\\\hline\hline
$\bullet\phantom{i}\phi(L(1,2m)\times D^*_{4n})$  & $4mn$ & $[e^{\pi i/m},1]$, $[1,e^{\pi i/n}]$, $[1,\hat{j}]$\\
\tiny{with $(m,2n)=1$}&&\\
$\bullet\phantom{i}\phi(L(1,2m)\times T^*)$ &  $24m$ &$[e^{\pi i/m},1]$, $[1, (1+\hat{i}+\hat{j}-\hat{k})/2]$, $[1, (1+\hat{i}+\hat{j}+\hat{k})/2]$\\
\tiny{with $(m,6)=1$}&&\\

$\bullet\phantom{i}\phi(L(1,2m)\times O^*)$ &  $48m$ & $[e^{\pi i/m}, 1]$, $[1, e^{\pi i /4}]$, $[1, (1+\hat{i}+\hat{j}+\hat{k})/2]$\\
\tiny{with $(m,6)=1$}& &  \\

$\bullet\phantom{i}\phi(L(1,2m)\times I^*)$ &  $120m$ & $[e^{\pi i/m}, 1]$, $[1, (1+\tau\hat{i}-\tau^{-1}\hat{k})/2]$, $[1, (\tau+\hat{i}+\tau^{-1}\hat{j})/2]$\\
\tiny{with $(m,30)=1$}& &  \tiny{where $\tau=(1+\sqrt{5})/2$}\\

$\bullet\phantom{i} \mathfrak{I}^2_{m,n}$ & $4mn$ &$[e^{\pi i/m},1]$, $[1,e^{\pi i/n}]$, $[e^{\pi i/(2m)},\hat{j}]$\\
\tiny{with $(m,2)=2$, $(m,n)=1$}&&\\

$\bullet\phantom{i} \mathfrak{I}^3_m$ & $24m$ & $[e^{\pi i/m}, 1]$, $[1,\hat{i}]$, $[1,\hat{j}]$, $[e^{\pi i /(3m)},(-1-\hat{i}-\hat{j}+\hat{k})/2 ]$\\
\tiny{with $(m,6)=3$} &&\\

$\bullet\phantom{i} L(q,p)$ & $p$ & $[e^{2\pi i k/p}, e^{2\pi i (1-k)/p}]$\\
\tiny{with $(q,p)=1$} & & \tiny{where $2k\equiv (q+1)$mod $p$}\\\hline

\end{tabular}
\end{center}
\end{table}

\begin{remark}
{\em
Notice that the groups $\phi(L(1,2m)\times D^*_4)$ and $\mathfrak{I}^2_{m,1}$ (the $n=1$ cases) are in fact cyclic, and are therefore excluded when {\textit{non-cyclic}} subgroups are considered.
}
\end{remark}

Generators of $\mathfrak{I}^2_{m,n}$ were found in \cite{FalbelPaupert}, however we find it prudent to discuss them here as well.  Consider the index--$2$ normal subgroups, $L(1,2m)\lhd L(1,4m)$ and $L(-1,2n)\lhd D^*_{4n}$, generated by $[e^{\pi i/m}, 1]$ and $[1, e^{\pi i /n}]$ respectively.  Then $\phi(L(1,2m)\times L(-1,2n))\subset \phi(L(1,4m)\times D^*_{4n})$ is an index--$4$ subgroup, so to obtain the index--$2$ diagonal subgroup one only needs to add the generator $[e^{\pi i /(2m)}, \hat{j}]$.  In fact, $\mathfrak{I}^2_{m,n}$ is generated by $[1, e^{\pi i /n}]$ and $[e^{\pi i /(2m)}, \hat{j}]$ alone since $[e^{\pi i /(2m)}, \hat{j}]^2=[-e^{\pi i /m},1]$ and $-Id\in L(-1,2n)$, however it will become apparent later why it is useful to consider the set of three generators.

Generators of $\mathfrak{I}^3_m$ can be found in a similar way to the index--$2$ diagonal case discussed above.  However, they are not found in  \cite{FalbelPaupert}.
Consider the index--$3$ normal subgroups, $L(1,2m)\lhd L(1,6m)$ and $D^*_{8}\lhd T^*$,
generated by $[e^{\pi i/m}, 1]$ and $\{[1,\hat{i}],[1,\hat{j}]\}$ respectively.  Then $\phi(L(1,2m)\times D^*_8)\subset \phi(L(1,6m)\times T^*)$ is an index--$9$ subgroup.
Now, from Table \ref{U(2)_table}, recall that $T^*$ is generated by
\begin{align}
S=[1, (1+\hat{i}+\hat{j}-\hat{k})/2]\phantom{=}\text{and}\phantom{=}T=[1, (1+\hat{i}+\hat{j}+\hat{k})/2],
\end{align}
which are both of order $6$, and notice that $ST=[1,\hat{j}]$ and $TS=[1,\hat{i}]$.  Therefore, to obtain the index--$3$ diagonal subgroup one only needs to add the diagonal element $[e^{\pi i /(3m)},(-1-\hat{i}-\hat{j}+\hat{k})/2 ]$ as a generator, which is  effectively of order $3$ in the extension of  $\phi(L(1,2m)\times D^*_8)\subset \phi(L(1,6m)\times T^*)$ since $(-1-\hat{i}-\hat{j}+\hat{k})/2=(S^2)^{-1}$.  The choice of $(S^2)^{-1}$ is because, via $\phi$, this generator will multiply by the quaternion $S^2$ on the right.  Note that there are several choices one could make here for the diagonal element to add as a generator, however given the presentations \eqref{binary_presentation} and \eqref{two_generators} it is easy to see that all such choices would yield the same group.  Since $[e^{\pi i /(3m)},(-1-\hat{i}-\hat{j}+\hat{k})/2 ]^3=[\exp(\pi i /m),1]$, the entire group $\mathfrak{I}^3_m$ can be generated without $[e^{\pi i/m},1]$, but once again it will become apparent later why it is useful to include this element in the set of generators.

The group $L(q,p)$, where $1\leq q<p$ are relatively prime, is generated by the matrix
\begin{align}
\label{L(q,p)_generator}
\left(
\begin{matrix}
\exp(2\pi i /p)&0\\
0&\exp(2\pi iq/p)
\end{matrix}
\right)
\left(
\begin{matrix}
z_1\\
z_2
\end{matrix}
\right)
\end{align}
acting on $\CC^2$.
We know that this generator must have an equivalent representation as
\begin{align}
[e^{2\pi i k/p},h_1+h_2\hat{j}]
\end{align}
for some $k\in \mathbb{Z}$ and some $h_1+h_2\hat{j}\in S^3$.  Therefore
\begin{align}
\label{L(q,p)_matrix}
\exp(2\pi i k/p)\left(
\begin{matrix}
h_1& -\bar{h}_2\\
h_2 & \bar{h}_1
\end{matrix}
\right)=
\left(
\begin{matrix}
\exp(2\pi i /p)&0\\
0&\exp(2\pi iq/p)
\end{matrix}
\right),
\end{align}
 so $h_2=0$, $e^{2\pi i k/p}\cdot h_1=e^{2\pi i /p}$, and $e^{2\pi i k/p}\cdot \bar{h}_1=e^{2\pi i q/p}$.  From this, observe that $h_1=e^{2\pi i (1-k)/p}$.  Finally, taking determinants of both sides of \eqref{L(q,p)_matrix} we find that $e^{4\pi i k/p}=e^{2\pi i (q+1)/p}$, so $2k\equiv (q+1)$mod $p$.

\section{ALE orbifold metrics}
\label{quotients_g_LB}
In this section, for each non-cyclic finite subgroup $\Gamma\subset {\rm U}(2)$ containing no complex reflections,
we take the essential step of obtaining a scalar-flat K\"ahler ALE orbifold with group at infinity $\Gamma$ and with all singularities isolated and of cyclic type.  
These metrics will be obtained as certain quotients of certain LeBrun negative mass metrics, and all singularities will lie on an orbifold quotient of the $\CP^1$ at the origin.
\begin{theorem}
\label{quotient_theorem}
For each non-cyclic finite subgroup $\Gamma\subset{\rm U}(2)$ containing no complex reflections, there exists a scalar-flat K\"ahler ALE orbifold with group at infinity $\Gamma$ and with all singularities isolated and specified precisely as follows.
{\em
\begin{align*}
\begin{array}{lllll}
\hline
\Gamma\subset {\rm U}(2)& \text{Conditions}  &  &\hspace{-3mm}\text{Orbifold}& \hspace{-7mm}\text{groups} \\\hline\hline
\phi(L(1,2m)\times D^*_{4n}) & (m,2n)=1 & L(1,2)& L(1,2)& L(-m,n)\\  
\phi(L(1,2m)\times T^*)  & (m,6)=1& L(1,2) &  L(-m,3) & L(-m,3)\\ 
\phi(L(1,2m)\times O^*) & (m,6)=1 & L(1,2)& L(-m,3) & L(-m,4)\\
\phi(L(1,2m)\times I^*)  & (m,30)=1& L(1,2)& L(-m,3) & L(-m,5) \\
\text{$\mathfrak{I}^2_{m,n}$} &(m,2)=2,(m,n)=1& L(1,2) &L(1,2)&L(-m,n) \\
 \text{$\mathfrak{I}^3_m$}& (m,6)=3 & L(1,2)& L(1,3) &L(2,3)\\\hline
\end{array}
\end{align*}
\em}
\end{theorem}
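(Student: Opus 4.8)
The plan is to realize each of the scalar-flat Kähler ALE orbifolds in Theorem~\ref{quotient_theorem} as a finite quotient of a LeBrun negative mass metric $(\mathcal{O}(-\ell), g_{LB})$ by a suitable subgroup of $\U(2)$, and to locate and identify all the fixed points. The key structural fact, recalled in Section~\ref{LB_metrics}, is that $g_{LB}$ on $\mathcal{O}(-\ell)$ is $\U(2)$-invariant, so any finite $G \subset \U(2)$ acting freely on $S^3$ and acting on $\CC^2/L(1,\ell)$ descends to act isometrically on $(\mathcal{O}(-\ell), g_{LB})$, preserving the scalar-flat Kähler structure. For each non-cyclic $\Gamma$ in Table~\ref{U(2)_table}, I would take $\ell$ to be $2m$ (so that $L(1,2m) \lhd \Gamma$) in the binary-polyhedral cases, and similarly $4m$ or $6m$ in the $\mathfrak{I}^2_{m,n}$ and $\mathfrak{I}^3_m$ cases; then $\Gamma / L(1,\ell)$ is a finite group acting on $\mathcal{O}(-\ell)$, and the quotient $(\mathcal{O}(-\ell)/(\Gamma/L(1,\ell)), g_{LB})$ is a scalar-flat Kähler ALE orbifold whose group at infinity is exactly $\Gamma$ (since the action at infinity is the original $\Gamma$-action on $S^3/L(1,\ell)$).

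**Locating the singularities.** The first concrete step is to observe that the action of $\Gamma/L(1,\ell)$ on $\mathcal{O}(-\ell)$ can have fixed points only on the central $\CP^1$ at $\tilde r = 0$ (the exceptional divisor), because the complement is biholomorphic and isometric to an open subset of $(\CC^2\setminus\{0\})/L(1,\ell)$ on which $\Gamma$ acts freely (it acts freely on $S^3$). On the $\CP^1 = \mathcal{H}^{-1}$-fiber-quotient, the residual action is by the Möbius transformations coming from the right-quaternionic (i.e. $\mathrm{SU}(2)$) factor of $\Gamma$, as computed in \eqref{Hopf_S2_action}: left multiplication by $e^{\pi i/m}$ is invisible on the base $S^2$. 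So the induced action on the $\CP^1$ is precisely the action of the image of $\Gamma$ in $\mathrm{SO}(3)$, which is the corresponding polyhedral group (dihedral, tetrahedral, octahedral, icosahedral) by the discussion surrounding \eqref{Psi_SO(3)} and \eqref{regular_presentation}. The fixed points on $\CP^1$ are therefore the vertices/edge-midpoints/face-centers of the associated spherical tiling, i.e. the three special orbit-types with isotropy of orders $a, b, c$ from \eqref{a,b,c}, which accounts for why there are exactly three singular points in each non-cyclic case.

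**Identifying the orbifold groups.** The main computational step — and the one I expect to be the real obstacle — is to identify the local orbifold group at each of the three fixed points. At a fixed point $p$ on $\CP^1$, the local model is $\CC^2$ quotiented by the full stabilizer of the corresponding point in $S^3$ under the $\Gamma$-action on $\mathcal{O}(-\ell)$; this stabilizer is generated by the $L(1,\ell)$-generator (acting as a scalar $e^{2\pi i/\ell}$ on both coordinates, hence fixing $p$) together with the stabilizer of $p$ inside the polyhedral group, lifted back through $\phi$ to $\U(2)$ and combined with the appropriate left-$S^1$ factor. One must diagonalize this abelian stabilizer acting on the tangent space at $p$ — one eigendirection is along $\CP^1$, the other transverse — and read off the pair of weights, which gives a cyclic group of type $L(q',p')$. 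I would carry this out case by case using the explicit generators in Table~\ref{U(2)_table}: for instance, in the $D^*_{4n}$ case the point fixed by the order-$2n$ cyclic subgroup $\langle [1,e^{\pi i/n}]\rangle$ together with $L(1,2m)$ yields (after diagonalizing and using $(m,2n)=1$) the group $L(-m,n)$, while the two points in the order-$4$ orbit of $[1,\hat j]$ give $L(1,2)$ each; the tetrahedral/octahedral/icosahedral rows are analogous with the orders $2,3,4,5$ coming from \eqref{a,b,c}. The diagonal subgroups $\mathfrak{I}^2_{m,n}$ and $\mathfrak{I}^3_m$ require extra care because the extra generator $[e^{\pi i/(2m)},\hat j]$ (resp. $[e^{\pi i/(3m)},(-1-\hat i-\hat j+\hat k)/2]$) mixes the left $S^1$ with the right $\mathrm{SU}(2)$ factor, so its square (resp. cube) lands in $L(1,\ell)$ and one must track congruences mod $\ell$ carefully; nevertheless the same diagonalization yields $L(1,2), L(1,2), L(-m,n)$ and $L(1,2), L(1,3), L(2,3)$ respectively. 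Finally, since each local model is $\CC^2/(\text{finite abelian})$ and the LeBrun metric is smooth away from $\CP^1$ and extends smoothly over $\CP^1$ before the quotient, the quotient metric is a genuine orbifold metric with exactly these isolated cyclic singularities, and it remains scalar-flat Kähler ALE as the quotient of a scalar-flat Kähler ALE metric by a holomorphic isometry group. The numbers $q^{-1;r}$-type inverses appearing in \eqref{WPS_table} do not intervene directly here, but an analogous modular-inverse bookkeeping is what converts the "raw" weights at each fixed point into the normalized $L(q',p')$ notation, and getting these congruences right across all six rows is the delicate part.
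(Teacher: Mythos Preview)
Your overall strategy matches the paper's: quotient a LeBrun negative mass metric $(\mathcal{O}(-2m), g_{LB})$ by the residual action of $\Gamma$, use the Hopf map to see that the induced action on the exceptional $\CP^1$ is the corresponding polyhedral group in $\mathrm{SO}(3)$ (hence exactly three special orbits with isotropy orders $a,b,c$), and then diagonalize the cyclic stabilizer at a representative fixed point to read off $L(\alpha_i,\beta_i)$.

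There is, however, a genuine error in your setup for the two diagonal families. You propose $\ell = 4m$ for $\mathfrak{I}^2_{m,n}$ and $\ell = 6m$ for $\mathfrak{I}^3_m$, so that one would quotient $\mathcal{O}(-\ell)$ by $\Gamma/L(1,\ell)$. But $L(1,4m)$ is \emph{not} a subgroup of $\mathfrak{I}^2_{m,n}$: if $[e^{\pi i/(2m)},1]$ belonged to $\mathfrak{I}^2_{m,n}$, then combining with the generator $[e^{\pi i/(2m)},\hat j]$ would produce $[1,\hat j]$ and hence all of $D^*_{4n}$, forcing $\mathfrak{I}^2_{m,n}$ to be the full $\phi(L(1,4m)\times D^*_{4n})$ of order $8mn$ rather than its index-$2$ subgroup of order $4mn$. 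The same argument shows $L(1,6m)\not\subset\mathfrak{I}^3_m$. The paper instead observes (from Table~\ref{U(2)_table}) that \emph{every} non-cyclic $\Gamma$ contains $L(1,2m)=\langle[e^{\pi i/m},1]\rangle$, so one uses $\ell=2m$ uniformly. For the diagonal cases the subgroup $\Gamma'$ obtained by dropping the generator $[e^{\pi i/m},1]$ still contains mixed left--right elements (and in fact equals $\Gamma$ itself), so one must use the projection $\Pi:\U(2)\to\mathrm{SU}(2)$ of \eqref{SU(2)_projection} to see that the action on $\CP^1$ is still the polyhedral one, and then treat the two order-$3$ fixed points of $\mathfrak{I}^3_m$ separately since they are not exchanged by the tetrahedral action.

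A smaller imprecision: your description of the local orbifold group as ``$\CC^2$ modulo the full stabilizer, including the $L(1,\ell)$-generator'' blurs what is happening. The $L(1,2m)$-generator acts \emph{trivially} on $\mathcal{O}(-2m)$ and does not enter the stabilizer directly; its effect is encoded in the fiber coordinate. Concretely, at $p=[0:1]$ the normal fiber is parametrized by the $L(1,2m)$-invariant $z_2^{2m}$, so the diagonalized stabilizer $[1,e^{\pi i/\mathfrak{p}}]$, which sends $z_2\mapsto e^{-\pi i/\mathfrak{p}}z_2$, acts on the fiber by $e^{-2\pi i m/\mathfrak{p}}$ while rotating the base direction by $e^{2\pi i/\mathfrak{p}}$. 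This twist by $2m$ is precisely what produces $L(-m,\mathfrak{p})$ rather than $L(-1,\mathfrak{p})$, and it is made explicit in the paper around \eqref{Hopf_diagonal}.
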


\begin{proof}
Observe, from Table \ref{U(2)_table}, that all $\Gamma$ of interest contain $[e^{\pi i/m},1]$ in the set of generators.  This acts on $S^3$ as rotation by $\pi/m$ in the Hopf fiber, and generates $L(1,2m)$.  Therefore, we desire to take a quotient of $(\mathcal{O}(-2m),g_{LB})$ by
$\Gamma/L(1,2m)$ as to obtain the action of $\Gamma$ at ALE infinity.  This is equivalent to taking a quotient by $\Gamma'\subset\Gamma$, where $\Gamma'$ is the subgroup generated by the same generators listed in Table \ref{U(2)_table}, but without $[e^{\pi i/m},1]$.  However, notice that this action is not effective as $L(1,2m)\cap \Gamma'=-Id$.

From the potential function construction of the LeBrun negative mass metrics discussed in Section \ref{LB_metrics}, it is clear that the ${\rm U}(2)$-action on $(\mathcal{O}(-2m),g_{LB})$ descends from the usual action on $\CC^2\setminus\{0\}$ away from the $\CP^1$ at the origin.  Intuitively the orbifold points of the quotient $(\mathcal{O}(-2m),g_{LB})/\Gamma'$ can be seen to lie on the $\CP^1$ at the origin, and to arise from subgroups of the action on $S^3/L(1,2m)$ that have some particular Hopf fibers on which they act by rotation.  This is so, because as the fibers shrink, singularities occur precisely where some subgroup preserves the fiber as opposed to mapping between different fibers.  The map $\psi:S^3\rightarrow {\rm SO}(3)$, from \eqref{Psi_SO(3)}, can be used to find the fixed points of the induced action on the $\CP^1$ at the origin.  However, to find the actual orbifold groups, one needs to understand in what way the $L(1,2m)$-action in the normal directions determines the actual type singularity, and this is not immediately clear.  We do so by first, for each $\Gamma'$, finding the number of orbifold points and their respective orders, and then by using a diagonalization argument along with the Hopf map to find the precise orbifold groups.

For $\Gamma=\phi(L(1,2m)\times\langle a,b,c\rangle)$, where here the integers $a,b,c$ are as in Table \ref{SU(2)_table} such that they correspond to a binary polyhedral group, observe that $\Gamma'=\langle a,b,c\rangle\subset{\rm SU}(2)$.
Then, since $L(1,2m)$ acts on $S^3$ by rotation of the Hopf fibers, $\Gamma'$ descends to the action of the corresponding 
polyhedral group $(a,b,c)$ on the $\CP^1$ at the origin.  Recall, from Section \ref{U(2)_subgroups}, that $S^2/(a,b,c)$ has three singularities of orders $a,b,c$.
Since $L(1,2m)\cap\Gamma'=-Id$, from the presentations \eqref{binary_presentation} and \eqref{regular_presentation}, we see that the 
effective action of $\Gamma'$, which is $\Gamma/L(1,2m)\subset {\rm U}(2)/L(1,2m)$, is that of the corresponding polyhedral group.
Therefore, $(\mathcal{O}(-2m),g_{LB})/\langle a,b,c\rangle$ has three cyclic singularities of orders $a,b,c$ on the $\CP^1$ at the origin, and the subgroups of $\Gamma'$ fixing these points are cyclic of order twice that of their respective singularities.  Each such subgroup actually fixes two points on the $\CP^1$, but there are identifications of this larger set of fixed points in the overall action of $\Gamma'$ yielding the three singularities.  Thus,
it is only necessary to understand the action around a representative fixed point of each class to determine the  precise orbifold groups.  
These subgroups, being cyclic in ${\rm SU}(2)$, can be respectively diagonalized to $L(-1,2\mathfrak{p})$, generated by $\gamma(\mathfrak{p})=[1,e^{\pi i/\mathfrak{p}}]$, where $\mathfrak{p}\in\{a,b,c\}$ corresponds to the order of the particular singularity.  Due to the ${\rm U}(2)$-invariance of the LeBrun negative mass metrics, we can restrict our attention to finding the orbifold group in the quotient by a diagonalized subgroup.

To do this, examine the induced action of $\gamma(\mathfrak{p})$ under the Hopf map as in \eqref{Hopf_S2_action}:
\begin{align}
\label{Hopf_diagonal}
\mathcal{H}\big(\gamma(\mathfrak{p})(z_1,z_2)\big)=\frac{e^{2\pi i/p}\cdot z_1}{z_2}\in S^2=\CC \cup \{\infty\}.
\end{align}
This fixes the points $\{0\},\{\infty\}\in S^2$.  Without loss of generality, we focus on $\{0\}$.  From \eqref{Hopf_diagonal} observe that $\gamma(\mathfrak{p})$ induces an action of rotation by $2\pi /\mathfrak{p}$ on the tangent plane to $\{0\}\in S^2$.  Since the fixed point corresponds to $z_1=0$, the normal fiber here is the image of the complex line $(0,z_2)$ in the quotient $\CC^2/L(1,2m)$.  
Then, since $\gamma(\mathfrak{p})(0,z_2)=(0,e^{-\pi i/p}\cdot z_2)$, the induced action in the normal directions to the fixed point in $(\mathcal{O}(-2m),g_{LB})$ is rotation by $-2\pi i m/\mathfrak{p}$.
Therefore, the orbifold groups of the singularities in $(\mathcal{O}(-2m),g_{LB})/\langle a,b,c\rangle$ are $L(-m,a)$, $L(-m,b)$ and $L(-m,c)$.

The proofs of the $\mathfrak{I}^2_{m,n}$ and $\mathfrak{I}^3_m$ diagonal subgroup cases will follow similarly, however a delicate issue arises due to the fact that not all generators of the respective $\Gamma'$ here are in ${\rm SU}(2)$.  Therefore, consider the projection map $\Pi:{\rm U}(2)\rightarrow {\rm SU}(2)$ defined by
\begin{align}
\label{SU(2)_projection}
\Pi\big([e^{i\theta},h_1+h_2\hat{j}]\big)=[1,h_1+h_2\hat{j}]\in {\rm SU}(2).
\end{align}
For all $\gamma\in {\rm U}(2)$, from  \eqref{Hopf_S2_action}, we see 
 that $\mathcal{H}(\gamma(z_1,z_2))=\mathcal{H}(\Pi(\gamma)(z_1,z_2))$,
and therefore, the singularities arising  in $(\mathcal{O}(-2m),g_{LB})/\Gamma'$ here, correspond point-wise, and in order, to those arising in the quotient by $\Pi(\Gamma')\subset{\rm SU}(2)$.  Also, certain elements of each $\Gamma'$ contain left quaternionic multiplication, it is necessary to examine their action carefully to determine the orbifold groups.

The subgroup $\Gamma'\subset \mathfrak{I}^2_{m,n}$  descends to the action of the dihedral group $(2,2,n)$ on $S^2$, and therefore $(\mathcal{O}(-2m),g_{LB})/\Gamma'$ has three singularities of orders $2,2,n$ as above.  Clearly, the order $2$ singularities have orbifold groups $L(1,2)$.
The singularity of order $n$ arises as a fixed point of the subgroup generated by $[1, e^{\pi i/n}]$, so the orbifold group will be $L(-m,n)$ as we saw earlier.

The subgroup $\Gamma'\subset \mathfrak{I}^3_m$  descends to the action of the tetrahedral group $(2,3,3)$ on $S^2$, and therefore $(\mathcal{O}(-2m),g_{LB})/\Gamma'$ has three singularities of order $2,3,3$ as above.  Clearly, the order $2$ singularity has orbifold group $L(1,2)$.
The order $3$ singularities arise as the fixed points of the subgroup generated by
$[e^{\pi i/(3m)},(-1-\hat{i}-\hat{j}+\hat{k})/2]$.  This diagonalizes to $[e^{\pi i/(3m)},e^{\pi i/3}]$, the induced action of which under the Hopf map fixes the points $\{0\},\{\infty\}\in S^2=\CC^2\cup\{\infty\}$.  However, here these points are not identified in the overall quotient by $\Gamma'$.  This can be seen by examining the action of the tetrahedral group on $S^2$.  From \eqref{Hopf_diagonal} and \eqref{SU(2)_projection}, observe that the actions in the tangent directions at $\{0\}$ and $\{\infty\}$ are rotations by $2\pi/3$ and $-2\pi/3$ respectively.  The normal fibers to $\{0\}$ and $\{\infty\}$ are the images of the complex lines $(0,z_2)$ and $(z_1,0)$ in $\CC^2/L(1,2m)$ respectively, and therefore the corresponding actions in the normal directs are rotations by $2\pi i(1-m)/3$ and $2\pi i(1+m)/3$.
Finally, since $(m,6)=3$, the orbifold groups are $L(1-m,3)=L(1,3)$ and $L(-1-m,3)=L(2,3)$ respectively.
\end{proof}

\begin{remark}\label{alphabeta}
{\em
In Theorem \ref{quotient_theorem}, there are always precisely $3$ 
singularities, and we will write these as type 
$L(\alpha_i, \beta_i)$ for $i =1,2,3$, where 
$\alpha_i$ is chosen modulo $\beta_i$ to satisfy 
$1 \leq \alpha_i \leq \beta_i -1$. 
}
\end{remark}


\section{Scalar-flat K\"ahler ALE metrics on minimal resolutions}
\label{SFKsec}
In this section we construct scalar-flat K\"ahler ALE metrics on the minimal resolution of $\CC^2/\Gamma$ for each non-cyclic finite subgroup $\Gamma\subset{\rm U}(2)$ containing no complex reflections.  This will prove Theorem \ref{t1} as the existence of such metrics for cyclic groups is already known.  
We begin in Section \ref{SFK_gluing} by giving an adaptation of a gluing theorem of Rollin-Singer.  Then, we complete the construction in Section \ref{t1_proof} by using this gluing to resolve the singularities of the ALE orbifolds obtained in Theorem \ref{quotient_theorem} by attaching appropriate Calderbank-Singer spaces, and thus obtain smooth scalar-flat K\"ahler ALE metrics. Finally, we make some remarks about the topology of the minimal resolutions.

\subsection{Scalar-flat K\"ahler gluing}
\label{SFK_gluing}
An essential component of the proof of Theorem~\ref{t1} is the following adaptation of a gluing theorem due to Rollin-Singer:

\begin{theorem}
\label{SFK_gluing_theorem} Let $(M, \omega)$ be a scalar-flat K\"ahler ALE orbifold of
complex dimension~$2$ with finitely many cyclic singularities
satisfying $H^1(M ; \RR) = 0$. 
Then the minimal resolution $\widehat{M}$ admits scalar-flat K\"ahler metrics. 
\end{theorem}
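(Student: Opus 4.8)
\emph{Overall strategy.} In real dimension four a K\"ahler metric is scalar-flat precisely when it is anti-self-dual, and conversely, by a theorem of Boyer, a Hermitian anti-self-dual metric on a complex surface lies in the conformal class of a scalar-flat K\"ahler metric. The plan is therefore to produce a Hermitian anti-self-dual metric on the minimal resolution $\widehat{M}$ by gluing, and to invoke Boyer's theorem only at the very end. The building blocks are the Calderbank-Singer scalar-flat K\"ahler ALE metrics on the minimal resolutions of $\CC^2/L(q_i,p_i)$, where $L(q_i,p_i)$ runs through the cyclic orbifold groups occurring at the singular points of $M$; each of these is Hermitian anti-self-dual, ALE of order two (order four if it happens to be Ricci-flat), and asymptotic to the flat cone $\CC^2/L(q_i,p_i)$. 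First I would excise a small (orbifold) ball about each singular point of $M$, rescale the $i$-th Calderbank-Singer space by a small parameter $\varepsilon$, and graft it across the resulting annular neck, interpolating the two metrics by a cutoff function supported in the neck; near infinity the Calderbank-Singer piece is biholomorphic to the punctured orbifold ball, so the complex structures match and the glued space is exactly the minimal resolution $\widehat{M}$. This yields a one-parameter family $\omega_\varepsilon$ of Hermitian metrics on $\widehat{M}$ whose self-dual Weyl curvature is supported in the necks and tends to zero at a definite rate in $\varepsilon$, measured in the weighted norms described next.

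\emph{Linear analysis on a noncompact base.} The essential step — and the point at which the argument departs from the compact gluing theorem of Rollin-Singer — is to invert the linearization of the anti-self-duality operator at $\omega_\varepsilon$, with a bound independent of $\varepsilon$, in weighted H\"older spaces adapted simultaneously to the bubbling regions and to the ALE end of $M$. Over the ALE end one introduces a decay weight lying strictly between the relevant indicial roots of the linearized operator, so that the operator on $M$ alone is Fredholm and in fact surjective at that weight; over the necks one uses the parameter-dependent weights of the standard construction. The approximate cokernel of the glued operator is assembled from the obstruction spaces of the pieces: those of the Calderbank-Singer ALE spaces vanish, as established by Rollin-Singer, and the obstruction space of the weighted anti-self-duality complex on $M$ also vanishes, because (i) the intersection form of $\widehat{M}$ — and of $M$ — is negative definite, these being (orbifold) minimal resolutions of quotient singularities, so there are no $L^2$ self-dual harmonic two-forms, and (ii) the hypothesis $H^1(M;\RR)=0$ eliminates the remaining potential contribution and rigidifies the gauge. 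Patching the resulting parametrices with a partition of unity then produces a right inverse $P_\varepsilon$ for the linearized operator with $\|P_\varepsilon\|$ bounded uniformly in $\varepsilon$.

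\emph{Nonlinear solution and conclusion.} With a uniform right inverse available, the anti-self-duality equation for the correction term is solved by a contraction-mapping argument: the nonlinearity is quadratically small with constants uniform in $\varepsilon$ and the initial error is $O(\varepsilon^{\delta})$ for some $\delta>0$, so for $\varepsilon$ small there is an exact Hermitian anti-self-dual metric $g_\varepsilon$ on $\widehat{M}$. Since the correction decays at the rate prescribed by the weight at ALE infinity, $g_\varepsilon$ is again ALE with the same group at infinity as $M$. Finally, Boyer's theorem yields a scalar-flat K\"ahler metric in the conformal class of $g_\varepsilon$; as the conformal factor is $1+O(\rho^{-2})$ at infinity, this K\"ahler metric is still ALE, which completes the proof.

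\emph{Main obstacle.} I expect the principal difficulty to be precisely the passage from a compact to a noncompact base: one must redo the linear theory in weighted spaces on an ALE end, verify that a weight can be chosen to make the linearized anti-self-duality operator on $M$ Fredholm and surjective while remaining compatible with the neck weights, check that excision and parametrix patching go through with the extra noncompact end present, and confirm that the obstruction vanishing — coming from negative-definiteness of the intersection form and from $H^1(M;\RR)=0$ — persists in the ALE orbifold category. Once this weighted linear package is in place, the nonlinear step and the passage to scalar-flat K\"ahler metrics are routine adaptations of the known arguments.
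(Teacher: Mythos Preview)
Your overall architecture matches the paper's: graft Calderbank--Singer bubbles onto the ALE orbifold base in the Hermitian anti-self-dual framework of Rollin--Singer, carrying an extra weight at the ALE end of $M$, and finish with Boyer's theorem. You also correctly flag the noncompact base as the new ingredient. However, two points in your linear analysis do not hold up as written.

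\textbf{Cokernel vanishing.} Your justification --- negative-definiteness of the intersection form, hence no $L^2$ self-dual harmonic $2$-forms --- addresses the wrong operator. The Rollin--Singer linearization is
\[
S:\Lambda^2_-\to\Lambda^2_+,\qquad S\alpha=d^+\delta\alpha+\langle\rho,\alpha\rangle\,\omega,
\]
and a cokernel element $\phi=f\omega+\alpha\in\Lambda^2_+$ satisfies $d^-\delta\phi-f\rho=0$, not $d\phi=d^*\phi=0$; it is \emph{not} an $L^2$ harmonic self-dual form, and negative-definiteness does not kill it. The paper (following Rollin--Singer) instead differentiates this equation to obtain the fourth-order identity
\[
(\overline{\partial}\partial^{\#})^*(\overline{\partial}\partial^{\#})f=0,
\]
uses the chosen weight to force decay of $f$, integrates by parts to conclude $\overline{\partial}\partial^{\#}f=0$, so that $J\nabla f$ is a decaying Killing field on an ALE space and therefore vanishes; then $\alpha\equiv 0$ follows. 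This Killing-field argument is what actually makes the right inverse exist on \emph{both} $X_1$ and the doubly-weighted $X_2$, and it is the piece missing from your outline.

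\textbf{Role of $H^1(M;\RR)=0$.} You invoke this hypothesis in the linear analysis (``rigidifies the gauge''), but it plays no role there. Its only appearance is at the Boyer step: once the glued metric is Hermitian ASD, Boyer gives $d^+\beta=0$ for the Lee form $\beta$; the weight plus integration by parts gives $d\beta=0$; and \emph{then} $H^1=0$ is needed to write $\beta=df$, so that $e^{f}\omega$ is the desired K\"ahler form. Without this, Boyer's theorem alone does not produce a K\"ahler representative.
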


The proof is more or less identical to that of Rollin-Singer, with the 
addition of a weight at infinity, and we will only discuss the few places
in the proof that need modification. 
Conforming to the notation from \cite{RollinSinger}, we let 
$(X_1, g_1)$ be a Calderbank-Singer scalar-flat K\"ahler ALE space, and let 
$(X_2, g_2)$ be the scalar-flat K\"ahler ALE orbifold minus a singular point $p$. 
We have holomorphic coordinates $z$ near infinity on $X_1$, with
\begin{align}
\begin{split}
g_1 &= |dz|^2 + \eta_1 (z), \\
| \nabla^m \eta_1 |_{g_1} &= O ( |z|^{-m-2} ),
\end{split}
\end{align}
as $|z| \rightarrow \infty$,  and 
holomorphic coordinates $u$ around the singular point $p$.  On $X_2$
\begin{align}
\begin{split}
g_2 &= |du|^2 + \eta_2(u),\\ 
|\eta_2(u)| &= O ( |u|^2), \\
|\nabla \eta_2 (u)| &= O(|u|),\\ 
|\nabla^{m+2} \eta_2 (u) | & = O(1),
\end{split}
\end{align}
for $m \geq 0$, as $|u| \rightarrow 0$. 

Finally, choose ALE coordinates $w$ near infinity on $X_2$ with 
\begin{align}
\begin{split}
g_2 &= |dw|^2 + \eta_1 (w), \\
| \nabla^m \eta_1 |_{g_1} &= O ( |w|^{-m-2} ),
\end{split}
\end{align}
for $m \geq 0$, as $|w| \rightarrow \infty$. Note that 
these coordinates are not necessarily holomorphic.

Let $r_1 \geq 1$ be a smooth function satisfying
\begin{align}
r_1 =   
\begin{cases}
|z|  &  |z| \geq 2\\
1  &   |z| \leq 1/2,
\end{cases}
\end{align}
and $r_2 \geq 1/2$ for $|u| \geq 1/2$ a smooth function satisfying
\begin{align}
r_2 =   
\begin{cases}
|u|  &  |u| \leq 1/2\\
1  &   |u| \geq 2,
\end{cases}
\end{align}
and let $r_3 \geq 1$ be a smooth function satisfying
\begin{align}
r_3 =   
\begin{cases}
|w|  &  |w| \geq 2\\
1  &   |w| \leq 1/2.
\end{cases}
\end{align}

The weighted space on $X_1$ is defined exactly as before, 
\begin{align}
r_1^{\delta_1}B^{n,\alpha}(X_1)  = \{ f : r_1^{-\delta_1} f \in B^{n, \alpha} (X_1) \},
\end{align}
with norm
\begin{align}
\Vert f \Vert_{n,\alpha, \delta_1} = \Vert r_1^{-\delta_1} f \Vert_{n, \alpha}
\end{align}
where $C^{n, \alpha}$ are the H\"older spaces with norms defined as 
on \cite[page 257]{RollinSinger}

For $X_2$ we will define a doubly-weighted space by
\begin{align}
r_2^{\delta_2} r_3^{\delta_3} B^{n,\alpha}(X_2)= \{ f : r_2^{-\delta_2} r_3^{-\delta_3} f \in B^{n, \alpha} (X_2)\},
\end{align}
with norm
\begin{align}
\Vert f \Vert_{n,\alpha, \delta, \delta_3} = \Vert r_2^{-\delta_2} r_3^{-\delta_3} f \Vert_{n, \alpha}
\end{align}
with obvious modification to the definition of the H\"older norm. 

Fixing a scalar-flat K\"ahler metric, the nonlinear map is 
\begin{align}
\mathcal{F} : \Lambda^2_- (\cong \Lambda_0^{1,1}) \rightarrow \Lambda^2_+, 
\end{align}
defined by 
\begin{align}
\mathcal{F}(A) = \Pi_{\Lambda^2_+}\Lambda_A W_+ ( \omega + A),
\end{align}
where $W_+$ is the self-dual part of the Weyl tensor, 
and $\Lambda$ is the adjoint of wedging with $\omega + A$. 
The zeroes of $\mathcal{F}$ correspond to anti-self-dual Hermitian metrics.

The linearized operator of $\mathcal{F}$ at $A = 0$ is given by
\begin{align}
S : \alpha \mapsto d^+ \delta \alpha + \langle \rho, \alpha \rangle \omega, 
\end{align}
where $\rho$ is the Ricci form. 
The most important step is to show that there is 
no cokernel for the adjoint of the linearized operator on each of the pieces. 
\begin{proposition} Choose $0 < \delta < 2$ and $0 < \alpha < 1$. 
Then on $X_1$, the linearized operator mapping from 
\begin{align}
S_1 : r_1^{-\delta} B^{2, \alpha}(X_1, \Lambda^2_-) \rightarrow 
r_1^{-\delta -2} B^{0,\alpha} (X_1, \Lambda^2_+)
\end{align}
has a bounded right inverse $G_1$. 

On $X_2$, the linearized operator mapping from  
\begin{align}
S_2 : r_2^{-\delta} r_3^{-\delta} B^{2, \alpha}(X_2, \Lambda^2_-) \rightarrow 
r_2^{-\delta -2} r_3^{-\delta} B^{0,\alpha} (X_2, \Lambda^2_+),
\end{align}
has a bounded right inverse $G_2$. 
\end{proposition}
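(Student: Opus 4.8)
The plan is to establish the surjectivity-with-bounded-inverse statement on each piece by the standard weighted-analysis package: (i) identify the correct mapping properties of the linearized operator $S_i$ on the relevant weighted H\"older spaces, (ii) rule out the kernel of the formal adjoint $S_i^*$ in the dual weight range, and (iii) invoke Fredholm theory on ALE/ALE-orbifold ends (as in \cite{RollinSinger}, \cite{LeBrunMaskit}, \cite{TV2}) to conclude that $S_i$ has closed range and then full range, whence a bounded right inverse $G_i$ exists. For $X_1$ this is essentially verbatim from Rollin-Singer once one observes that the Calderbank-Singer spaces are ALE of order $2$ (so the geometry near infinity is asymptotically flat with the stated decay on $\eta_1$) and that the weight $\delta\in(0,2)$ lies in the range of admissible (non-indicial) weights for the operator $S_1=d^+\delta + \langle\rho,\cdot\rangle\omega$; the Ricci form $\rho$ decays like $|z|^{-4}$, so it is a lower-order perturbation of $d^+\delta$ with respect to the weighted spaces, and the model operator $d^+\delta$ on $\RR^4/\Gamma$ is well understood. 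For $X_2$ the new feature is the \emph{doubly}-weighted space: near the excised singular point $p$ we use the weight $r_2^{\delta}$ (so that sections are allowed to blow up like $|u|^{\delta}$, $\delta<2$, i.e.\ not too badly near the puncture), and near ALE infinity we use the weight $r_3^{\delta}$ with the same $\delta$ chosen so that $0<\delta<2$; the point is that $r_2$ and $r_3$ have disjoint supports away from $1$, so the two ends can be analyzed independently and the Fredholm/injectivity arguments localize to each end.

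Concretely, I would proceed as follows. First, recall that a right inverse exists iff $S_i$ is surjective with closed range, and that by the usual duality on weighted spaces $S_i$ is surjective onto $r^{-\delta-2}B^{0,\alpha}$ precisely when the formal $L^2$-adjoint $S_i^*$ has trivial kernel acting on the dual weighted space (sections decaying like $r^{\delta-2}$ near each end, with $\delta - 2 < 0$, so genuinely decaying solutions). Second, analyze $\ker S_i^*$: an element of the kernel is an anti-self-dual $2$-form $\psi$ (identified with a form valued section) satisfying the adjoint equation; the key structural fact, exactly as in Rollin-Singer, is that solutions of $S_i^*\psi = 0$ with appropriate decay are related to infinitesimal complex deformations / holomorphic data, and the scalar-flat K\"ahler hypothesis together with $H^1(M;\RR)=0$ (hence $H^1$ of each piece vanishes in the relevant degree, using that $X_1$ and the orbifold minus a point are simply connected or have the right cohomology) forces $\psi$ to be parallel, and then decay at infinity forces $\psi \equiv 0$. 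On $X_1$ the same vanishing is what Rollin-Singer prove, using that a Calderbank-Singer space carries no nonzero decaying anti-self-dual harmonic forms in this weight range beyond those already accounted for. Third, having killed the cokernel, closed range for $S_i$ on the doubly-weighted spaces follows from the standard parametrix patching: build approximate inverses on the model ends (the flat cone $\RR^4/L(\alpha_i,\beta_i)$ near $p$, using inverted-coordinate asymptotics, and the flat cone at ALE infinity) and on a compact core, glue with cutoffs, and absorb the error by choosing the gluing region far out — here the disjointness of $\mathrm{supp}(dr_2)$ and $\mathrm{supp}(dr_3)$ makes the bookkeeping clean.

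The main obstacle I expect is the injectivity of $S_2^*$ on the doubly-weighted space, i.e.\ genuinely verifying that there is no cokernel for $X_2$. Two things must be checked carefully. First, that the weight $\delta$ near the puncture $p$ is \emph{not} an indicial root of the model operator on $\RR^4/L(\alpha_i,\beta_i)$; since $\delta$ ranges over the open interval $(0,2)$ one must confirm the indicial roots of $d^+\delta$ on this orbifold cone avoid $(0,2)$ (the relevant ones are at $0$, at negative integers, and at values $\geq 2$), so that no exceptional solution creeps in. Second — and this is the genuinely substantive point — that the global solutions of $S_2^*\psi=0$ decaying at \emph{both} ends vanish: near $p$, $\psi$ decaying like $|u|^{\delta-2}$ with $\delta - 2 \in (-2,0)$ is mildly singular but $L^2$, so by removable-singularity/elliptic regularity it extends across $p$ (or defines a genuine $L^2$ class), and near infinity $\psi$ decays; one then runs a Weitzenb\"ock/Bochner argument using scalar-flatness (the relevant Weitzenb\"ock formula for self-dual or anti-self-dual $2$-forms on a scalar-flat K\"ahler $4$-manifold has no zeroth-order obstruction), integrates by parts — legitimate because of the decay at both ends — to conclude $\nabla\psi=0$, and then $\psi=0$ by decay. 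The orbifold nature of $X_2$ enters only in that all of this must be done on the smooth locus with the singular point treated as an (admissible, because $\delta<2$) end, which is exactly why the doubly-weighted formalism was set up this way; apart from that, the argument is a direct transcription of \cite{RollinSinger}, replacing the compact-orbifold end estimate by the ALE weighted estimate already used for $X_1$.
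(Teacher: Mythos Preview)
Your general framework is correct --- surjectivity via vanishing of the cokernel, Fredholm theory on weighted spaces, indicial analysis at each end --- and matches the paper's outline.  But the substantive step, the vanishing of $\ker S_i^*$, is not handled correctly.

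First a minor point: cokernel elements $\phi$ live in $\Lambda^2_+$, not $\Lambda^2_-$.  More importantly, your proposed Weitzenb\"ock argument does not apply directly.  The adjoint equation is $S^*\phi = d^-\delta\phi - f\rho = 0$ where $\phi = f\omega + \alpha$ with $\alpha \in \Lambda^{2,0}\oplus\Lambda^{0,2}$; the Ricci-form term means $S^*\phi = 0$ is \emph{not} the harmonic equation on $\Lambda^2_+$, so the Bochner formula $\Delta_H = \nabla^*\nabla$ (valid on $\Lambda^2_+$ for scalar-flat anti-self-dual metrics) does not immediately give $\nabla\phi = 0$.  Your invocation of $H^1(M;\RR)=0$ is also misplaced: that hypothesis enters only later, in the Boyer step converting the Hermitian ASD metric to a genuine K\"ahler one, not in the cokernel vanishing.

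The paper's (and Rollin--Singer's) actual argument is K\"ahler-specific.  One differentiates $S^*\phi = 0$ to obtain a fourth-order scalar equation for $f$, namely $(\Delta^2 + 2\,\mathrm{Ric}\cdot\nabla^2)f = 0$, which is identified as $(\overline{\partial}\partial_g^\#)^*(\overline{\partial}\partial_g^\#)f = 0$.  The choice of weight $0 < \delta < 2$ forces $f$ to decay at each end, so integration by parts is justified and yields $\overline{\partial}\partial_g^\# f = 0$.  Hence $\partial_g^\# f$ is holomorphic, equivalently $J\nabla f$ is Killing; but there are no nontrivial decaying Killing fields on an ALE space, so $f\equiv 0$, and then $\alpha\equiv 0$ follows.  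This Lichnerowicz/Killing-field mechanism is the key idea you are missing.
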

\begin{proof}
The proof is almost identical to that of \cite[Proposition 4.3.2]{RollinSinger}
with a minor modification. 
Writing $\phi \in \Lambda^2_+$ as $\phi  = f \omega + \alpha$, 
where $\alpha \in \Lambda^{2,0} \oplus \Lambda^{0,2}$, a kernel element
of the adjoint operator satisfies
\begin{align}
S^* \phi = d^- \delta ( f \omega + \alpha ) - f \rho = 0. 
\end{align}
This can be differentiated to yield 
\begin{align}
( \Delta^2  + 2 Ric \cdot \nabla^2) f = 0,
\end{align}
which can then be written as 
\begin{align}
(\overline{\partial} \partial^{\#})^* (\overline{\partial} \partial^{\#}) f = 0,
\end{align}
where 
$\partial^{\#}$ is the $(1,0)$ component of the gradient. 

The main point now is that by choice of weight forces 
$f$ to decay, and an integration-by-parts argument shows that 
\begin{align}
(\overline{\partial} \partial^{\#}) f = 0.
\end{align}
Consequently, $\partial^{\#} f$ is holomorphic, which says that 
$J \nabla f$ is Killing. However, there are no decaying Killing 
fields on an ALE space, so $f \equiv 0$, and this implies that $\alpha \equiv 0$. 
\end{proof}

The rest of the gluing argument (construction of the approximate metric and 
application of the implicit function theorem) is almost exactly the same as in \cite{RollinSinger},
and the details are omitted. The final ingredient is an adaptation of a result of Boyer to prove the spaces are in fact scalar-flat K\"ahler. 
To see this, since $g$ is Hermitian, then the K\"ahler form 
$\omega = g(J \cdot, \cdot)$ satisfies
\begin{align}
d \omega +\beta \wedge \omega = 0,
\end{align}
for a $1$-form $\beta$ called the {\textit{Lee form}}. 
In \cite{Boyer}, Boyer shows that if $(X,g)$ is anti-self-dual, then $d^+ \beta = 0$.
From the choice of weight, 
an integration-by-parts argument then shows that $d \beta =0$. 
Since $H^1(X, \RR) = 0$, we have $\beta = df$, and the 
conformal metric 
$e^{f} \omega$ is then K\"ahler (which is necessarily scalar-flat 
since it is anti-self-dual).

\subsection{Uniform construction of the building blocks}
\label{cyclic_construction}
In this section we construct scalar-flat K\"ahler ALE metrics on the minimal resolutions of cyclic quotient singularities.  While these are the previously known metrics discussed in Sections \ref{LB_metrics}--\ref{M-E-H_metrics}, here we will obtain them inductively using only the canonical Bochner K\"ahler metrics on weighted projective spaces of the form $\CP^2_{(1,q,p)}$, which were introduced in Section~\ref{BK_WPS}.  Then, from our use of these in the proof of Theorem \ref{t1} below in Section \ref{t1_proof}, we will see that scalar-flat K\"ahler ALE metrics on every possible minimal resolution can be obtained using this simpler set of building blocks.

From Section \ref{BK_WPS}, recall that the conformal blow-up of $(\CP^2_{(1,1,n)},g_{BK})$ is precisely $(\mathcal{O}(-n),g_{LB})$, see \cite{Joyce1991,DabkowskiLock}.  Note that as $(\mathcal{O}(-n),g_{LB})$ can be obtained from $(\CP^2_{(1,1,n)},g_{BK})$ via conformal blow-up, the scalar-flat K\"ahler ALE orbifolds of Theorem \ref{quotient_theorem} can be seen to arise from these as well.  Also, recall, more generally, that the conformal blow-up of $(\CP^2_{(r,q,p)},g_{BK})$ at any of the singular points yields a scalar-flat K\"ahler ALE orbifold with group at infinity the same as the orbifold group of the point at which the conformal blow-up was based and up to two singular points with orbifold groups orientation reversed conjugate to their compactified counterparts \cite{Apostolov-Rollin}.  

In particular, let us examine the conformal blow up of $(\CP^2_{(1,q,p)},g_{BK})$ at the point $[0,0,1]$.  This will be a scalar-flat K\"ahler ALE orbifold with group at infinity $L(q,p)$ and one singular point on a $\CP^1$ with orbifold group $L(q-p^{-1;q},q)$, recall \eqref{WPS_table} and that $p^{-1;q}$ denotes the inverse of $p$ modulo $q$.  
From these spaces, we will inductively construct construct scalar-flat K\"ahler ALE metrics on the minimal resolution of all cyclic quotient singularities.  By way of induction, assume that $q>1$, since the LeBrun negative mass metrics have already been obtained as conformal blow-ups, and that a scalar-flat K\"ahler ALE metric has already been obtained on the minimal resolution of $\CC^2/L(a,b)$ for all relatively prime integers $1\leq a<b<p$.  Then, to obtain a scalar-flat K\"ahler ALE metric on the minimal resolution of $\CC^2/L(q,p)$ take the conformal-blow up of $(\CP^2_{(1,q,p)},g_{BK})$ at the point $[0,0,1]$, and resolve the singularity by attaching a scalar-flat K\"ahler ALE metric on the minimal resolution of $\CC^2/L(q-p^{-1;q},q)$, which we are assumed to have since $q<p$, using the the scalar-flat K\"ahler gluing of Theorem \ref{SFK_gluing_theorem}.

While this completes the proof, for the sake of clarity we write out the first few steps of this iterative process.  For $p=2$, we have $(\mathcal{O}(-2),g_{LB})$ as the conformal blow-up of $(\CP^2_{(1,1,2)},g_{BK})$.  For $p=3$, we have $(\mathcal{O}(-3),g_{LB})$ as the conformal blow-up of $(\CP^2_{(1,1,3)},g_{BK})$.  To obtain a scalar-flat K\"ahler ALE metric on the minimal resolution of $\CC^2/L(2,3)$, take the conformal blow up of $(\CP^2_{(1,2,3)},g_{BK})$ at the point $[0,0,1]$ to obtain a scalar-flat ALE orbifold with group at infinity $L(2,3)$ and one singular point with orbifold group $L(1,2)$.  Then, since we already have a scalar-flat K\"ahler ALE metric
on the  minimal resolution of $\CC^2/L(1,2)$ by an earlier step in this process, we resolve this singularity by attaching it using Theorem \ref{SFK_gluing_theorem}, and obtain the desired space.

\begin{remark}
{\em
Notice that only weighted projective spaces with one or two singularities, i.e. those of the form $\CP^2_{(1,q,p)}$ where $1\leq q<p$ are relatively prime integers, are made use of in this construction.  This is so because taking the Green's function metrics
of a weighted projective space with three singular points, i.e. one of the form $\CP^2_{(r,q,p)}$ where $1<r<q<p$ are relatively prime integers, and resolving by attaching the appropriate scalar-flat K\"ahler 
ALE spaces to the remaining singularities tends to yield a scalar-flat K\"ahler ALE metric on a blow-up (in the complex analytic sense) of the minimal resolution. 
}
\end{remark}

\subsection{Construction of the metrics for non-cyclic groups}
\label{t1_proof}
For each non-cyclic finite subgroup $\Gamma\subset {\rm U}(2)$ containing no complex reflections, 
consider the scalar-flat K\"ahler ALE orbifold with group at infinity $\Gamma$ 
obtained in
Theorem~\ref{quotient_theorem}.  All singularities of this space lie on the $\CP^1$ 
at the origin and are of cyclic type with the orbifold groups
$L(\alpha_i,\beta_i)$ for $i=1,2,3$, recall Remark \ref{alphabeta}.  
To resolve these singularities, we use 
Theorem \ref{SFK_gluing_theorem} to attach a Calderbank-Singer space with group 
at infinity $L(\alpha_i,\beta_i)$ to the singularity with the corresponding orbifold 
group.  Furthermore, since both the ALE orbifold, obtained as a quotient of a LeBrun 
negative mass metric, and all of the Calderbank-Singer spaces admit holomorphic 
isometric circle actions corresponding to rotations of the Hopf fiber, we can 
impose an $S^1$ symmetry in the gluing argument, that is, we may perform the 
gluing $S^1$-equivariantly, to obtain a scalar-flat K\"ahler ALE space 
with group at infinity $\Gamma$ which admits a holomorphic isometric circle action.

It is straightforward to see that the conditions in Definition \ref{minresdef}
are satisfied, so we have obtained scalar-flat K\"ahler ALE metrics on the minimal resolution of $\CC^2/\Gamma$. This completes the proof of Theorem \ref{t1}

\begin{remark}{\em It would perhaps be possible to use gluing results for constant 
scalar curvature K\"ahler metrics, instead of the Hermitian anti-self-dual 
gluing theorem. We used the latter here since it is easier to generalize the result of
Rollin-Singer to the case that the base orbifold is ALE. However, gluing results for extremal K\"ahler metrics will indeed 
be used in Section \ref{ca_compactifications}. 
}
\end{remark}
\subsection{Topology of the minimal resolutions}
The intersection matrix of the exceptional curve in the minimal 
resolution is described in Figure \ref{resfig}.
\begin{figure}[h]
\setlength{\unitlength}{2cm}
\begin{picture}(7,3.1)(.5,0)
\linethickness{.3mm}
\put(1,0){\line(1,0){1}}
\put(1,0){\circle*{.1}}
\put(.75,-.25){$-e_{k_1}^1$}
\put(2,0){\circle*{.1}}
\put(1.75, -.25){$-e_{k_1-1}^1$}
\multiput(2,0)(0.1,0){10}
{\line(1,0){0.05}}
\put(3,0){\circle*{.1}}
\put(2.75, -.25){$-e_{1}^1$}
\put(3,0){\line(1,0){1}}
\put(4,0){\circle*{.1}}
\put(3.75, -.25){$-b_{\Gamma}$}
\put(4.75, -.25){$-e_1^2$}
\put(4,0){\line(1,0){1}}
\put(5,0){\circle*{.1}}
\put(5,0){\line(1,0){1}}
\put(6,0){\circle*{.1}}
\put(5.75,-.25){$-e_2^2$}
\put(7,0){\circle*{.1}}
\put(6.75, -.25){$-e_{k_2}^2$}
\multiput(6,0)(0.1,0){10}
{\line(1,0){0.05}}
\put(4,0){\line(0,1){1}}
\put(4,1){\circle*{.1}}
\put(3.55, .75){$-e^3_1$}
\put(4,1){\line(0,1){1}}
\put(4,2){\circle*{.1}}
\put(3.55, 1.75){$-e^3_2$}
\multiput(4,2)(0,0.1){10}
{\line(0,1){0.05}}
\put(4,3){\circle*{.1}}
\put(3.55, 2.75){$-e^3_{k_3}$}
\end{picture}
\vspace{5mm}
\caption{The exceptional divisor of the minimal resolution.}
\label{resfig}
\end{figure}
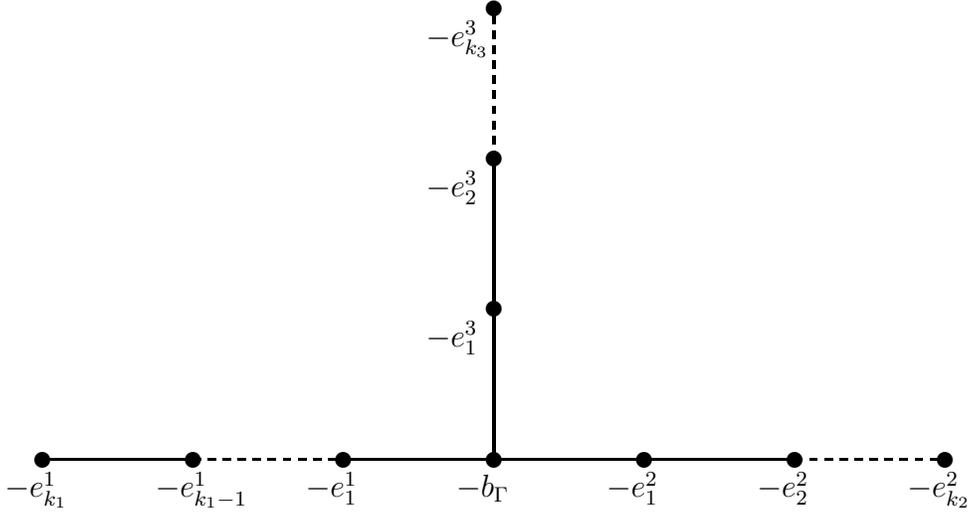

In Figure \ref{resfig}, each string of $e^j_i$ corresponds to the intersection matrix of the respective Calderbank-Singer space attached to the central rational curve at a particular singularity.  Clearly, the signature of the minimal resolution of $\CC^2/\Gamma$ is
\begin{align}
\label{minimal_resolution_signature}
\tau=-b_2^-=-1-\sum_{i=1}^3k_{L(\alpha_i,\beta_i)}.
\end{align}
The orbifold points which arise on the $\CP^1$ at the origin in our work correspond, in fact, to those in Brieskorn's resolution, and our procedure of resolving singularities by attaching the appropriate Calderbank-Singer spaces is a metric version of Brieskorn's complex analytic resolution of those singularities 
by a Hirzebruch-Jung string.

Finally, the self-intersection number of the central rational curve is given by 
\begin{align}
\label{b_self-intersection}
-b_{\Gamma}=-\Big( \frac{\alpha_1}{\beta_1}+\frac{\alpha_2}{\beta_2}+\frac{\alpha_3}{\beta_3}+\frac{2m}{h}\Big),
\end{align}
where $h$ is the order of the image of the group in ${\rm{PGL}}(2,\CC)$ and $m$ corresponds to that of the group, see \cite{Brieskorn}.  When $\Gamma$ is the image under $\phi$ of the product of $L(1,2m)$ with a binary polyhedral group, its image in ${\rm{PGL}}(2,\CC)$ is just the corresponding polyhedral group.  Similarly, the images of $\mathfrak{I}^2_{m,n}$ and $\mathfrak{I}^3_m$ in ${\rm PGL}(2,\CC)$ are the dihedral and tetrahedral groups respectively. Finally, we note that  
\begin{align}
\label{b_Gamma}
b_{\Gamma}=2+\frac{4m}{|\Gamma|}\Big[m-  \Big( m \text{ mod } \frac{|\Gamma|}{4m}\Big)\Big].
\end{align}
Observe, from this, that $-b_{\Gamma}<-1$.  

\section{Scalar-flat K\"ahler deformations}
\label{sfk_deformation}
We begin by finding the dimension of the space of infinitesimal scalar-flat K\"ahler deformations of the ALE orbifolds given in Theorem \ref{quotient_theorem}.  An essential ingredient here is Honda's work in understanding the deformation theory of compactifications of the LeBrun negative mass metrics \cite{HondaOn, Honda_2014}.  The space $(\mathcal{O}(-2),g_{LB})$ is the Eguchi-Hanson metric, for which it is well known that there are no such deformations.  This is the $m=1$ case, and the groups corresponding to this are exactly the binary polyhedral groups.  Therefore it is only necessary to consider $m>1$.  In the following proposition we not only find the dimension of the space of such deformations, but in fact show that it can be given very simply in terms of the respective $b_{\Gamma}$, which recall is negative the self-intersection number of the central rational curve.

\begin{proposition}
\label{sfk_deformation_proposition}
Let $\Gamma\subset {\rm U}(2)$ be a non-cyclic finite subgroup containing no complex reflections with $m>1$.  Then, the dimension of the space of infinitesimal scalar-flat K\"ahler deformations of the quotient $(\mathcal{O}(-2m),g_{LB})/\Gamma'$, which preserve all orbifold singularities, is given by
\begin{align*}
\dim(H^{sfk}_{\Gamma'})=
\displaystyle 2b_{\Gamma}-2=2+\frac{8m}{|\Gamma|}\Big[m-
  \Big( m \text{ \em mod } \frac{|\Gamma|}{4m}\Big)\Big].
\end{align*}
\end{proposition}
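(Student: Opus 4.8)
The second displayed equality is purely formal: substituting \eqref{b_Gamma} gives $2b_\Gamma-2 = 2 + \tfrac{8m}{|\Gamma|}\bigl[m-(m \bmod \tfrac{|\Gamma|}{4m})\bigr]$, so it suffices to prove $\dim(H^{sfk}_{\Gamma'}) = 2b_\Gamma-2$. The plan is to pass from the orbifold $(\mathcal O(-2m),g_{LB})/\Gamma'$ back up to the LeBrun space $\mathcal O(-2m)$, identify the relevant deformation space as a $\U(2)$-module, and then compute its $\Gamma'$-invariants by an orbifold Riemann--Roch argument on the exceptional curve.

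For the first reduction, one uses that since $m>1$ every small deformation of the complex structure of $\mathcal O(-2m)=\mathcal O(-n)$, $n=2m\ge 4$, preserves the exceptional $(-n)$-curve; this is the only point at which the hypothesis $m>1$ is really needed, since it fails precisely for the Eguchi--Hanson space $\mathcal O(-2)$, which has the $A_1$-smoothing among its deformations. Consequently a $\Gamma'$-invariant deformation of $\mathcal O(-2m)$ descends to a deformation of the quotient orbifold keeping its three singular points --- and of the same types, the local $\Gamma'$-representations at the fixed points being rigid. Combined with Honda's theorem \cite{HondaOn,Honda_2014} that every small deformation of $\mathcal O(-n)$ carries a scalar-flat K\"ahler ALE metric (applied $\Gamma'$-equivariantly, the metric being unique up to scale in its K\"ahler class hence $\Gamma'$-invariant), this identifies $H^{sfk}_{\Gamma'}\cong H^1(\mathcal O(-2m),\Theta_{\mathcal O(-2m)})^{\Gamma'}$ as complex vector spaces, so $\dim(H^{sfk}_{\Gamma'}) = 2\dim_{\CC} H^1(\mathcal O(-2m),\Theta)^{\Gamma'}$.

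Next, as a $\U(2)$-module $H^1(\mathcal O(-2m),\Theta)\cong \mathrm{Sym}^{2m-2}(\CC^2)\otimes\det(\CC^2)$, of complex dimension $2m-1$ (see \cite{HondaOn,Honda_2014}; the determinant twist is pinned down by requiring the central generator $[e^{\pi i/m},1]$, which acts trivially on $\mathcal O(-2m)$, to act trivially here). Restricting to $\Gamma'$, the exceptional curve $\CP^1=\CP(\CC^2)$ carries a $\Gamma'$-action factoring through its image $\overline\Gamma\subset\mathrm{PGL}(2,\CC)$, of order $h=|\Gamma|/(2m)$, and $\bigl(\mathrm{Sym}^{2m-2}(\CC^2)\otimes\det(\CC^2)\bigr)^{\Gamma'}\cong H^0(\Sigma,\mathcal L_m)$, where $\Sigma=\CP^1/\overline\Gamma$ is the spindle with three orbifold points $p_1,p_2,p_3$ of orders $\beta_1,\beta_2,\beta_3$ (the orders appearing in Theorem~\ref{quotient_theorem}) and $\mathcal L_m$ is the descended orbifold line bundle, of orbifold degree $(2m-2)/h$. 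Using the diagonalizations from the proof of Theorem~\ref{quotient_theorem} --- the stabilizer of the point over $p_i$ acts on the tangent line to the curve by a primitive $\beta_i$-th root of unity and on the fibre of $\mathcal O(2m-2)$ there by its $(m-1)$-st power --- one reads off that the local isotropy weight of $\mathcal L_m$ at $p_i$ is $(m-1)\bmod\beta_i$, which, since $\beta_i\nmid m$ in all six families, equals $\beta_i-1-\alpha_i$, with $\alpha_i\equiv -m$ as in Remark~\ref{alphabeta}.

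Finally, orbifold Riemann--Roch on $\Sigma$ (genus $0$) gives $h^0(\Sigma,\mathcal L_m)-h^1(\Sigma,\mathcal L_m)=\deg\lfloor\mathcal L_m\rfloor+1$, where $\lfloor\mathcal L_m\rfloor$ is the underlying line bundle on the coarse curve, and
\begin{align*}
\deg\lfloor\mathcal L_m\rfloor \;=\; \frac{2m-2}{h}-\sum_{i=1}^3\frac{\beta_i-1-\alpha_i}{\beta_i}\;=\;b_\Gamma-2,
\end{align*}
the last equality using Brieskorn's formula \eqref{b_self-intersection}, $b_\Gamma=\sum_i\tfrac{\alpha_i}{\beta_i}+\tfrac{2m}{h}$, together with the orbifold Gauss--Bonnet identity $\sum_i\tfrac1{\beta_i}=1+\tfrac2h$ for $S^2\to S^2/\overline\Gamma$. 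Since $b_\Gamma\ge 2$ we get $\deg\lfloor\mathcal L_m\rfloor\ge 0$, hence $h^1(\Sigma,\mathcal L_m)=0$ and $h^0(\Sigma,\mathcal L_m)=b_\Gamma-1$; therefore $\dim(H^{sfk}_{\Gamma'})=2(b_\Gamma-1)=2b_\Gamma-2$. (Alternatively, $\dim_{\CC}(\mathrm{Sym}^{2m-2}(\CC^2)\otimes\det)^{\Gamma'}$ can be evaluated directly, case by case, from the Molien series of each of the six families using the generators of Table~\ref{U(2)_table}.) I expect the delicate step to be the determination of the local isotropy data of $\mathcal L_m$ at $p_1,p_2,p_3$ --- equivalently, the exact bookkeeping of how the $\mathrm{Sym}^{2m-2}$-weights interact with the normal $L(1,2m)$-action analyzed in Theorem~\ref{quotient_theorem} --- since it is precisely this that conspires, via Gauss--Bonnet, to reproduce the fractional terms $\alpha_i/\beta_i$ of Brieskorn's formula; a secondary point requiring care is the equivariant form of Honda's theorem and the claim that for $m>1$ the singularity-preserving deformations see exactly the $\Gamma'$-invariant part of $H^1(\mathcal O(-2m),\Theta)$.
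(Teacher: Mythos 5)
Your route is genuinely different from the paper's. The paper also starts from Honda's identification of the complexified deformation space with $\rho\oplus\bar\rho$, $\rho=S^{2m-2}(\CC^2)\otimes\det$, but then computes $\dim(\rho^{\Gamma'})$ by pure character theory: averaging $\chi_\rho$ over $\Gamma'$, using the $\mathrm{SU}(2)$ character identity together with Eisenstein's cotangent-sum identity to produce sawtooth functions, and treating each of the six families (including explicit element-wise decompositions of $\mathfrak{I}^2_{m,n}$ and $\mathfrak{I}^3_m$) case by case before simplifying the resulting greatest-integer expressions to $2b_\Gamma-2$. Your idea of descending $S^{2m-2}(\CC^2)\otimes\det$ to an orbifold line bundle on the football $\CP^1/\overline{\Gamma}$ and invoking orbifold Riemann--Roch plus Gauss--Bonnet and Brieskorn's formula \eqref{b_self-intersection} is attractive precisely because it explains uniformly, without case analysis, why the answer is $b_\Gamma-1$ (complex) invariant directions; the vanishing step ($\deg\lfloor\mathcal{L}_m\rfloor=b_\Gamma-2\ge 0$ on a genus-$0$ coarse curve forces $h^1=0$) is fine.

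However, there is a genuine gap exactly at the step you flag, and two of your stated auxiliary claims are false. The assertions ``$\beta_i\nmid m$ in all six families'' and ``$\alpha_i\equiv-m$ as in Remark \ref{alphabeta}'' hold only for the four product families $\phi(L(1,2m)\times\{D^*_{4n},T^*,O^*,I^*\})$, where the stabilizers lie in ${\rm SU}(2)$ and your weight computation $w_i=(m-1)\bmod\beta_i=\beta_i-1-\alpha_i$ is correct. For $\mathfrak{I}^2_{m,n}$ one has $2\mid m$ and two singular points of type $L(1,2)$, and for $\mathfrak{I}^3_m$ one has $3\mid m$ with points of type $L(1,3)$ and $L(2,3)$; at these points $-m\equiv 0$ modulo $\beta_i$, the stabilizers are generated by elements such as $[e^{\pi i/(2m)},\hat{j}]$ and $[e^{\pi i/(3m)},(-1-\hat{i}-\hat{j}+\hat{k})/2]$ with nontrivial left (determinant) factors, and the isotropy character of the descended bundle must be computed including the determinant twist that makes the scalar subgroup $L(1,2m)\subset\Gamma'$ act trivially on fibers (the equivariant bundle is $\mathcal{O}(2m-2)\otimes\det^{2m-1}$ in the $H^0$-picture, not $\mathcal{O}(2m-2)\otimes\det$). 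A direct check shows the resulting weights at these points are not $(m-1)\bmod\beta_i$ (e.g.\ at the two $L(1,2)$ points of $\mathfrak{I}^2_{m,n}$ the weight is $0$, not $1$), and at the two order-$3$ points of $\mathfrak{I}^3_m$ the individual weights do not match $\beta_i-1-\alpha_i$ pointwise, though their sum does; so your final degree count can still be rescued, but only after redoing the local isotropy computation at the diagonal-family points with the left factors and the correct $\alpha_i$ from Theorem \ref{quotient_theorem} taken into account, and verifying the weight convention (tangent character versus its inverse) consistently. As written, the proof is complete only for the four product families; the paper's element-wise treatment of $\mathfrak{I}^2_{m,n}$ and $\mathfrak{I}^3_m$ is doing real work that your argument currently skips.
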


\begin{proof}
Finding the dimension of scalar-flat K\"ahler deformation of $(\mathcal{O}(-2m),g_{LB})/\Gamma'$ is equivalent to finding the dimension of the space of scalar-flat K\"ahler deformations of $(\mathcal{O}(-2m),g_{LB})$ that are invariant under the action of $\Gamma'$.
In general, for a representation $\rho$ of a finite group $\Gamma'$, the dimension of the space of invariant elements is given by
\begin{align}
\label{invariant}
\frac{1}{|\Gamma'|}\sum_{\gamma\in \Gamma'}\chi_{\rho}(\gamma),
\end{align}
where $\chi_{\rho}(\gamma)$ denotes the character of $\gamma\in \Gamma'$.  For $\gamma=[1,e^{i\theta}]\neq \pm 1\in {\rm SU}(2)$, there is the following well-known character identity on $S^{2k}(\CC^2)$ as a representation of ${\rm SU}(2)$
\begin{align}
\label{character_identity}
\chi_{S^{2k}(\CC^2)}(\gamma)=\sum_{p=0}^{2k}(e^{i\theta})^{2k-2p}=\frac{\sin((2k+1)\theta)}{\sin(\theta)}=\sin(2k\theta)\cot(\theta)+\cos(2k\theta).
\end{align}
Notice that $\chi_{S^{2k}}(\CC^2)(\gamma=[1,i])=\cos(\pi k)$, since $\sin(\pi k)\cot(\pi/2)=0$.
It will be useful here to introduce the sawtooth function which, for any $x\in \RR$, is defined as
\begin{align}
((x))=\begin{cases}
x-\lfloor x\rfloor -\frac{1}{2}\phantom{=}&\text{when $x\not\in\mathbb{Z}$}\\
0\phantom{=}&\text{when $x\in\mathbb{Z}$},
\end{cases}
\end{align}
where $\lfloor x \rfloor$ denotes the greatest integer less than $x$.
When $\theta=\pi/n$, there is the following useful identity due to Eisenstein, see \cite{Apostol}.
\begin{align}
\label{Eisenstein_identity}
\sum_{j=1}^{n-1}\sin\Big(\frac{2\pi k}{n}j\Big)\cot\Big(\frac{\pi}{n}j\Big)=-2n\Big(\Big(\frac{k}{n}\Big)\Big)
\end{align}

For $m>1$, Honda showed that the complexification of the space of infinitesimal scalar-flat K\"ahler deformations of $(\mathcal{O}(-2m),g_{LB})$ is equivalent to
\begin{align}
\label{representation_SFKd}
\rho\oplus \overline{\rho}\phantom{=}\text{ where }\phantom{=}\rho=S^{2m-2}(\CC^2)\otimes \det,
\end{align}
as a representation space of ${\rm U}(2)$, see \cite{HondaOn}.  Using this along with \eqref{invariant}, we will compute $\dim(H^{sfk}_{\Gamma'})$ for all appropriate $\Gamma'$.

Write the maximal torus in ${\rm U}(2)$ as diag$\{z_1=e^{i\theta_1},z_2=e^{i\theta_2}\}$, and observe that
\begin{align}
\label{character}
\chi_{\rho}(z_1,z_2)=(z_1z_2)\sum_{p=0}^{2m-2}z_1^{2m-2-p}z_2^p,
\end{align}
where $\chi_{\rho}(z_1,z_2)$ denotes the character of any $\gamma\in\Gamma'$ with eigenvalues $\{z_1,z_2\}$.
Note that $\dim(H^{sfk})=2\chi_{\rho}(1,1)=2\chi_{\rho}(-1,-1)=4m-2$.  

Consider when $\Gamma$ is the image under $\phi$ of the product of $L(1,2m)$ and a binary polyhedral group.
(Although the proofs for $\mathfrak{I}^2_{m,n}$ and $\mathfrak{I}^3_m$ will follow similarly, there is a certain delicate difference.)
Here, since the $\Gamma'\subset{\rm SU}(2)$ are the respective binary polyhedral groups so
any $\gamma\in \Gamma'$ has eigenvalues $\{z=e^{i\theta_{\gamma}},z^{-1}=e^{-i\theta_{\gamma}}\}$.  Thus, from \eqref{character_identity}, the character for $\gamma\neq \pm 1$ reduces to
\begin{align}
\begin{split}
\chi_{\rho}(\gamma)&=\sum_{p=0}^{2m-2}(e^{i\theta_{\gamma}})^{2m-2-2p}=\sin((2m-2)\theta_{\gamma})\cot(\theta_{\gamma})+\cos((2m-2)\theta_{\gamma}),
\end{split}
\end{align}
and therefore
\begin{equation}
\label{invariant_elements_SU(2)}
\dim(H^{sfk}_{\Gamma'})=\frac{2}{|\Gamma'|}\Big[2\cdot(2m-1)+\sum_{\gamma\neq \pm Id\in \Gamma'}\sin((2m-2)\theta_{\gamma})\cot(\theta_{\gamma})+\cos((2m-2)\theta_{\gamma})\Big].
\end{equation}
We will now find $\dim(H^{sfk}_{\Gamma'})$ in terms of certain greatest integer functions for each binary polyhedral group separately.  The conjugacy classes of the binary polyhedral groups are well known, see \cite{Stekolshchik}, and from the trace of a representative element one can determine the eigenvalues of the elements in its class.  We use this to decompose the binary polyhedral groups into sets of elements having the same eigenvalues.  Then, we further group the elements in such a way as to be able to repeatedly use \eqref{Eisenstein_identity} to compute $\dim(H^{sfk}_{\Gamma'})$ in terms of sawtooth functions, which we examine with respect to the possible congruencies of $m$ and in turn write in terms of greatest integer functions.

{$\boldsymbol D^{\ast}_{4n}:$} The binary dihedral group is composed of two disjoint sets of elements; the elements of the cyclic group $L(-1,2n)$, and the set of elements
\begin{align*}
S=\{[1,e^{\pi i k/n}*\hat{j}]\}_{1\leq k\leq 2n}.
\end{align*}
Since each element of $S$ has eigenvalues $\{i,-i\}$, and because $((x/2))=0$ for all $x\in \mathbb{Z}$, observe that it does not contribute to the sum in \eqref{invariant_elements_SU(2)}.  Decomposing the sum over these sets, we find that
\begin{equation*}
\begin{split}
\dim(H^{sfk}_{D^*_{4n}})&=\frac{1}{n}\Big[(2m-1)-2n\Big(\Big(\frac{m-1}{n}\Big)\Big)
\\
&\phantom{===} +n\cos(\pi(m-1))+\sum_{j=1}^{n-1}\cos\Big(\frac{2\pi}{n}(m-1)j\Big)\Big]\\
&=\begin{cases}
2\lfloor\frac{m-1}{n}\rfloor+2\phantom{=}& m\not\equiv 1\text{ mod }n\\
2\big(\frac{m-1}{n}\big)+2\phantom{=}&m\equiv 1\text{ mod }n.
\end{cases} 
\end{split}
\end{equation*}

{$\boldsymbol T^*:$} The elements of the binary tetrahedral group are separated into sets according to their eigenvalues as in Table \ref{T*_eigenvalues}.
\begin{table}[ht]
\caption{}
\label{T*_eigenvalues}
\begin{center}
\begin{tabular}{ l | c}
\hline
$\{z_1,z_2\}$ & $\#\{\gamma\in T^*: \text{diag}(\gamma)=\{z_1,z_2\}\}$ \\\hline
$\{1,1\}$ & $1$\\
$\{-1,-1\}$ & $1$\\
$\{i,-i\}$ & $6$\\
$\{e^{\pi i/3},e^{-\pi i/3}\}$ & $8$\\
$\{e^{2\pi i/3},e^{-2\pi i/3}\}$ & $8$
\end{tabular}
\end{center}
\end{table}
Group together the elements with eigenvalues $\{e^{\pi i\ell/3},e^{-\pi i\ell/3}\}$, for $\ell=1,2$, since these constitute the elements of $(L(-1,6)/\mathbb{Z}_2)\setminus{Id})$, and ignore the terms having eigenvalues $\{i,-i\}$ since they do not contribute to the sum in \eqref{invariant_elements_SU(2)}.  Decomposing the sum over these sets, we find that
\begin{equation*}
\begin{split}
\dim(H^{sfk}_{T^*})&=\frac{1}{6}\Big[(2m-1)-24\Big(\Big(\frac{m-1}{3}\Big)\Big)\\
& \phantom{===} +3\cos(\pi(m-1))+4\sum_{j=1}^{2}\cos\Big(\frac{2\pi}{3}(m-1)j\Big)\Big]\\
&=\begin{cases}
4\big\lfloor\frac{m-1}{3}\big\rfloor-m+3\phantom{=}&m\equiv 5\text{ mod }6\\
\frac{m-1}{3}+2\phantom{=}&m\equiv 1\text{ mod }6.
\end{cases}
\end{split}
\end{equation*}

{$\boldsymbol O^*:$} The elements of the binary octahedral group are separated into sets according to their eigenvalues as in Table \ref{O*_eigenvalues}.
\begin{table}[ht]
\caption{}
\label{O*_eigenvalues}
\begin{center}
\begin{tabular}{ l | c}
\hline
$\{z_1,z_2\}$ & $\#\{\gamma\in O^*: \text{diag}(\gamma)=\{z_1,z_2\}\}$\\\hline
$\{1,1\}$ & $1$\\
$\{-1,-1\}$ & $1$\\
$\{i,-i\}$ & $18$\\
$\{e^{\pi i/3},e^{-\pi i/3}\}$ & $8$\\
$\{e^{2\pi i/3},e^{-2\pi i/3}\}$ & $8$\\
$\{e^{\pi i/4},e^{-\pi i/4}\}$ & $6$\\
$\{e^{3\pi i/4},e^{-3\pi i/4}\}$ & $6$
\end{tabular}
\end{center}
\end{table}
Group together the elements with eigenvalues $\{e^{\pi i\ell/3},e^{-\pi i\ell/3}\}$, for $\ell=1,2$, as before.  Also, group together the elements with eigenvalues $\{e^{\pi i\ell/4},e^{-\pi i\ell/4}\}$, for $\ell=1,2,3$, since these constitute $(L(-1,8)/\mathbb{Z}_2)\setminus{Id})$.  Since, the $\{i,-i\}$ elements do not contribute to the sum
in \eqref{invariant_elements_SU(2)}, it does not matter that the order of that set differs from the orders of the other sets in this grouping.  Decomposing the sum over these sets, we find that
 \begin{equation*}
 \begin{split}
 \dim(H^{sfk}_{O^*})&=\frac{1}{12}\Big[(2m-1)-24\Big(\Big(\frac{m-1}{3}\Big)\Big)-24\Big(\Big(\frac{m-1}{4}\Big)\Big)+6\cos(\pi(m-1))\\
&\phantom{====}+4\sum_{j=1}^{2}\cos\Big(\frac{2\pi}{3}(m-1)j\Big)+3\sum_{j=1}^{3}\cos\Big(\frac{2\pi}{4}(m-1)j\Big)\Big]\\
&=\begin{cases}
2\lfloor\frac{m-1}{3}\rfloor+2\lfloor\frac{m-1}{4}\rfloor-m+3\phantom{=}&m\equiv 11\text{ mod }12\\
2\lfloor\frac{m-1}{4}\rfloor+\frac{1-m}{3}+2\phantom{=}&m\equiv 7\text{ mod }12\\
2\lfloor\frac{m-1}{3}\rfloor+\frac{1-m}{2}+2\phantom{=}&m\equiv 5\text{ mod }12\\
\frac{m-1}{6}+2\phantom{=}&m\equiv 1\text{ mod }12.
\end{cases}
\end{split}
\end{equation*}

{$\boldsymbol I^*:$} The elements of the binary icosahedral group are separated into sets corresponding to their eigenvalues as in Table \ref{I*_eigenvalues}.
\begin{table}[ht]
\caption{}
\label{I*_eigenvalues}
\begin{center}
\begin{tabular}{  l | c }
\hline
$\{z_1,z_2\}$ & $\#\{\gamma\in I^*: \text{diag}(\gamma)=\{z_1,z_2\}\}$\\\hline
$\{1,1\}$ & $1$\\
$\{-1,-1\}$ & $1$\\
$\{i,-i\}$ & $30$\\
$\{e^{\pi i/3},e^{-\pi i/3}\}$ & $20$\\
$\{e^{2\pi i/3},e^{-2\pi i/3}\}$ & $20$\\
$\{e^{\pi i/5},e^{-\pi i/5}\}$ & $12$\\
$\{e^{2\pi i/5},e^{-2\pi i/5}\}$ & $12$\\
$\{e^{3\pi i/5},e^{-3\pi i/5}\}$ & $12$\\
$\{e^{4\pi i/5},e^{-4\pi i/5}\}$ & $12$
\end{tabular}
\end{center}
\end{table}
Group together elements with eigenvalues $\{e^{\pi i\ell/3},e^{-\pi i\ell/3}\}$, for $\ell=1,2$, as before.  Also, group together elements with eigenvalues $\{e^{\pi i\ell/5},e^{-\pi i\ell/5}\}$, for $\ell=1,2,3,4$, since these constitute the elements of $(L(-1,10)/\mathbb{Z}_2)\setminus{Id})$.  Ignoring elements with eigenvalues $\{i,-i\}$ since they do not contribute to the sum in \eqref{invariant_elements_SU(2)}, and decomposing the sum over these sets, we find that
\begin{equation*}
\begin{split}
\dim(H^{sfk}_{I^*})&=\frac{1}{30}\Big[(2m-1)-60\Big(\Big(\frac{m-1}{3}\Big)\Big)-60\Big(\Big(\frac{m-1}{5}\Big)\Big)+15\cos(\pi(m-1))\\
&\phantom{====}+10\sum_{j=1}^{2}\cos\Big(\frac{2\pi}{3}(m-1)j\Big)+6\sum_{j=1}^{4}\cos\Big(\frac{2\pi}{5}(m-1)j\Big)\Big]\\
&=\begin{cases}
2\lfloor\frac{m-1}{3}\rfloor+2\lfloor\frac{m-1}{5}\rfloor-m+3\phantom{=}&m\equiv 17, 23, 29\text{ mod }30\\
2\lfloor\frac{m-1}{5}\rfloor+\frac{1-m}{3}+2\phantom{=}&m\equiv 7, 13, 19\text{ mod }30\\
2\lfloor\frac{m-1}{3}\rfloor+3\big(\frac{1-m}{5}\big)+2\phantom{=}&m\equiv 11\text{ mod }30\\
\frac{m-1}{15}+2\phantom{=}&m\equiv 1\text{ mod }30.
\end{cases}
\end{split}
\end{equation*}

The proof for these groups is completed as follows.
We have found each $\dim(H^{sfk}_{\Gamma'})$ in 
terms of certain greatest integer functions with respect to the particular congruences mod $|\Gamma|/4$.   For any positive integers $x,y,z$, satisfying $y<x\text{ mod }z$, notice that 
\begin{align}
\label{greatest_integer_simplify}
\Big\lfloor \frac{x-y}{z}\Big\rfloor=\frac{x- (x\text{ mod }z)}{z}\in\mathbb{Z}.
\end{align}
Using this, the expressions for $\dim(H^{sfk}_{\Gamma'})$ are easily 
simplified to obtain $2b_{\Gamma}-2$.

The idea for the groups $\mathfrak{I}^2_{m,n}$ and $\mathfrak{I}^3_m$ is similar, however as the authors are unaware of a decomposition of these groups into conjugacy classes, we perform an element-wise decomposition of these groups into sets, from which we are able to find the respective eigenvalue decompositions and proceed as above.  We will see that all eigenvalues here will be of the form $\{z_1,z_2\}=e^{i\theta_1}\{e^{i\theta_2},e^{-i\theta_2}\}$, for some $0\leq \theta_1,\theta_2<2\pi$, and therefore prove the following useful identity before we begin.
\begin{align}
\begin{split}
\label{index_identity}
\chi_{\rho}(e^{i(\theta_1+\theta_2)}&,e^{(\theta_1-i\theta_2)})=e^{i(2\theta_1)}\sum_{p=0}^{2m-2}\Big(e^{i(\theta_1+\theta_2)}\Big)^{2m-2-p}\Big(e^{i(\theta_1-\theta_2)}\Big)^p\\
&=e^{i(2m\theta_1)}\sum_{p=0}^{2m-2}\Big(e^{i\theta_2}\Big)^{2m-2-2p}\\
&=e^{i(2m\theta_1)}\big[\sin(2(m-1)\theta_2)\cot(\theta_2)+\cos(2(m-1)\theta_2)\big].
\end{split}
\end{align}

{$ \boldsymbol{\mathfrak{I}^2_{m,n}:}$} Recall that $\Gamma'\subset {\rm U}(2)$ is generated by $[1,e^{\pi i/n}]$, $[e^{\pi i/(2m)},\hat{j}]$.  In Section~\ref{U(2)_subgroups},  we saw that these actually generate the entire group $\mathfrak{I}^2_{m,n}$, but in order to obtain a smooth quotient it was necessary to take a quotient of $(\mathcal{O}(-2m),g_{LB})$, and included $[e^{\pi i/m},1]$ in the set of generators accordingly.  To find $\dim(H^1_{\mathfrak{I}^2_{m,n}})$, we first decompose $\Gamma'=\mathfrak{I}^2_{m,n}$ into the two disjoint sets of elements
\begin{itemize}
\item $S_1=\Big\{\gamma_1^{\ell,k}:=[e^{\pi i/(2m)},\hat{j}]^{2\ell}\cdot[1,e^{\pi ik/n}]\Big\}_{0\leq \ell\leq m-1 \text{ and }0\leq k\leq 2n-1}$\\
\item $S_2=\Big\{\gamma_2^{\ell,k}:=[e^{\pi i/(2m)},\hat{j}]^{2\ell+1}\cdot[1,e^{\pi ik/n}]\Big\}_{0\leq \ell\leq m-1 \text{ and }0\leq k\leq 2n-1}.$
\end{itemize}
Clearly $S_1,S_2\subset \mathfrak{I}^2_{m,n}$ and $S_1\cap S_2=\varnothing$, so since $|S_1|=|S_2|=2mn$ and $|\mathfrak{I}^2_{m,n}|=4mn$ we see that  $S_1\sqcup S_2$ constitutes all of $\mathfrak{I}^2_{m,n}$.  Next, for arbitrary $\gamma_1^{\ell,k}\in S_1$ and $\gamma_2^{\ell,k}\in S_2$, as given above, observe that
\begin{itemize}
\item 
The eigenvalues of $\gamma_1^{\ell,k}\in S_1$ are 
$(-1)^\ell\{e^{\pi i (\frac{\ell}{m}+\frac{k}{n})},e^{\pi i (\frac{\ell}{m}-\frac{k}{n})}\}$
\item
The eigenvalues of $\gamma_2^{\ell,k}\in S_2$ are
$(-1)^\ell\{e^{\pi i (\frac{2\ell+1}{2m}+\frac{1}{2})},e^{\pi i (\frac{2\ell+1}{2m}-\frac{1}{2})}\}$.
\end{itemize}
From \eqref{index_identity}, the characters of these elements are found to be as follows.
\begin{itemize}
\item $\chi_{\rho}(\gamma_1^{\ell,k})=\begin{cases}
\sin\big(\frac{2\pi}{n}k(m-1)\big)\cot\big(\frac{\pi}{n}k\big)+\cos\big(\frac{2\pi}{n}k(m-1)\big)\phantom{=}&\text{$k\neq 0$ or $n$}\\
2m-1\phantom{=}&\text{$k=0$ or $n$}.
\end{cases}$
\item $\chi_{\rho}(\gamma_2^{\ell,k})=1$ since $m$ is even.
\end{itemize}
Therefore, decomposing the sum over these sets, we find that
\begin{equation*}
\begin{split}
\dim(H^{sfk}_{\mathfrak{I}^2_{m,n}})&=\frac{1}{2mn}\Big[2m\cdot(2m-1)-4mn\Big(\Big(\frac{m-1}{n}\Big)\Big)
\\
&\phantom{=====}+2mn+2m\sum_{j=1}^{n-1}\cos\Big(\frac{2\pi}{n}(m-1)j\Big)\Big]\\
&=\begin{cases}
2\lfloor\frac{m-1}{n}\rfloor+2\phantom{=}& m\not\equiv 1\text{ mod }n\\
2\big(\frac{m-1}{n}\big)+2\phantom{=}&m\equiv 1\text{ mod }n.
\end{cases} 
\end{split}
\end{equation*}
Finally, using \eqref{greatest_integer_simplify}, observe that $\dim(H^{sfk}_{\mathfrak{I}^2_{m,n}})=2b_{\mathfrak{I}^2_{m,n}}-2.$

{$ \boldsymbol{\mathfrak{I}^3_m:}$} Recall that $\Gamma'\subset {\rm U}(2)$ is generated by $[1,\hat{i}]$, $[1,\hat{j}]$, $[e^{\pi i/(3m)}, (-1-\hat{i}-\hat{j}+\hat{k})/2]$.  In Section \ref{U(2)_subgroups},  we saw that these actually generate the entire group $\mathfrak{I}^3_m$, but in order to obtain a smooth quotient it was necessary to take a quotient of $(\mathcal{O}(-2m),g_{LB})$, and included $[e^{\pi i/m},1]$ in the set of generators accordingly.  Therefore, $\Gamma'=\mathfrak{I}^3_m$ is just $\phi(D^*_8\times \langle[e^{\pi i/(3m)},(-1-\hat{i}-\hat{j}+\hat{k})/2]\rangle)$.  To find $\dim(H^1_{\mathfrak{I}^2_{m,n}})$, we first decompose $\Gamma'=\mathfrak{I}^3_m$ into twenty-four disjoint sets of elements
which we write compactly below as $S_{\alpha,s}$ where the index $\alpha \in \{\pm 1,\pm \hat{i},\pm \hat{j}, \pm \hat{k}\}$  and the index $s\in\{0,1,2\}$.
\begin{itemize}
\item $S_{\pm1,s}=\Big\{ \gamma_{\pm1,s}^r:=\pm[e^{\pi i/(3m)},(-1-\hat{i}-\hat{j}+\hat{k})/2]^{3r+s}\Big\}_{0\leq r<m}$\\

\item $S_{\pm\hat{i},s}=\Big\{ \gamma_{\pm\hat{i},s}^r:=[1,\pm \hat{i}]*[e^{\pi i/(3m)},(-1-\hat{i}-\hat{j}+\hat{k})/2]^{3r+s}\Big\}_{0\leq r<m}$\\

\item $S_{\pm\hat{j},s}=\Big\{ \gamma_{\pm\hat{j},s}^r:=[1,\pm \hat{j}]*[e^{\pi i/(3m)},(-1-\hat{i}-\hat{j}+\hat{k})/2]^{3r+s}\Big\}_{0\leq r<m}$\\

\item $S_{\pm\hat{k},s}=\Big\{ \gamma_{\pm\hat{k},s}^r:=[1,\pm \hat{k}]*[e^{\pi i/(3m)},(-1-\hat{i}-\hat{j}+\hat{k})/2]^{3r+s}\Big\}_{0\leq r<m}$\\
\end{itemize}
Any pair of sets with differing index $s$ are clearly disjoint, so to see that these sets are all pairwise disjoint it is enough to observe that
$\gamma_{\pm1,1}^0\neq \gamma_{\pm\hat{i},1}^0\neq \gamma_{\pm\hat{j},1}^0\neq \gamma_{\pm\hat{k},1}^0.$
Since $|S_{\alpha,s}|=m$, for all $\alpha$ and $s$, and $|\mathfrak{I}^3_m|=24m$, we see that 
the disjoint union of these sets constitutes all of $\mathfrak{I}^3_m$.
As $r$ ranges from $0$ to $m-1$, the eigenvalues of the elements in each of these sets can be found to be as follows.
\begin{itemize}
\item
Eigenvalues for $S_{\pm1,s}$: $\pm e^{\frac{\pi i(3r+s)}{3m}}\begin{cases} \{1,1\}\phantom{=}&s=0\\
\{e^{\frac{2\pi i}{3}},e^{\frac{-2\pi i}{3}}\}\phantom{=}&s=1,2.
\end{cases}$

\item Eigenvalues for $S_{\pm \hat{i},s}$: $\pm e^{\frac{\pi i(3r+s)}{3m}}\begin{cases} \{i,-i\}\phantom{=}&s=0\\
\{e^{\frac{\pi i}{3}},e^{\frac{-\pi i}{3}}\}\phantom{=}&s=1\\
\{e^{\frac{2\pi i}{3}},e^{\frac{-2\pi i}{3}}\}\phantom{=}&s=2.
\end{cases}$

\item Eigenvalues for $S_{\pm \hat{j},s}$: $\pm e^{\frac{\pi i(3r+s)}{3m}}\begin{cases} \{i,-i\}\phantom{=}&s=0\\
\{e^{\frac{\pi i}{3}},e^{\frac{-\pi i}{3}}\}\phantom{=}&s=1\\
\{e^{\frac{2\pi i}{3}},e^{\frac{-2\pi i}{3}}\}\phantom{=}&s=2.
\end{cases}$

\item Eigenvalues for $S_{\pm\hat{k},s}$: $\pm e^{\frac{\pi i(3r+s)}{3m}}\begin{cases}\{i,-i\}\phantom{=}&s=0\\
\{e^{\frac{2\pi i}{3}},e^{\frac{-2\pi i}{3}}\}\phantom{=}&s=1,2.
\end{cases}$

\end{itemize}
Using \eqref{index_identity}, the characters are found to be as follows.
\begin{align*}
\bullet\phantom{i} \chi_{\rho}(\gamma^r_{\pm 1,0})&=2m-1\\
\bullet \phantom{i}\chi_{\rho}(\gamma^r_{\pm \hat{i},0})&=\chi_{\rho}(\gamma^r_{\pm \hat{j},0})=\chi_{\rho}(\gamma^r_{\pm \hat{k},0})=1\\
\bullet\phantom{i} \chi_{\rho}(\gamma^r_{\pm 1,1})&=\chi_{\rho}(\gamma^r_{\pm \hat{k},1})=e^{2\pi i/3}\Big[\sin\Big(2(m-1)\frac{2\pi}{3}\Big)\cot\Big(\frac{2\pi}{3}\Big)+\cos\Big(2(m-1)\frac{2\pi}{3}\Big)\Big]\\
&=-e^{2\pi i/3}\phantom{=}\text{since $m\equiv 0$ mod $3$}\\
\bullet\phantom{i} \chi_{\rho}(\gamma^r_{\pm \hat{i},1})&=\chi_{\rho}(\gamma^r_{\pm \hat{j},1})=e^{2\pi i/3}\Big[\sin\Big(2(m-1)\frac{\pi}{3}\Big)\cot\Big(\frac{\pi}{3}\Big)+\cos\Big(2(m-1)\frac{\pi}{3}\Big)\Big]\\
&=-e^{2\pi i/3}\phantom{=}\text{since $m\equiv 0$ mod $3$}\\
\bullet\phantom{i} \chi_{\rho}(\gamma^r_{\pm 1,1})&=\chi_{\rho}(\gamma^r_{\pm \hat{k},1})=\chi_{\rho}(\gamma^r_{\pm \hat{i},1})=\chi_{\rho}(\gamma^r_{\pm \hat{j},1})\\
&=e^{4\pi i/3}\Big[\sin\Big(2(m-1)\frac{2\pi}{3}\Big)\cot\Big(\frac{2\pi}{3}\Big)+\cos\Big(2(m-1)\frac{2\pi}{3}\Big)\Big]\\
&=-e^{4\pi i/3}\phantom{=}\text{since $m\equiv 0$ mod $3$}.
\end{align*}
Finally, we find that
\begin{align*}
\dim(H^{sfk}_{\mathfrak{I}^3_m})&=\frac{1}{12m}\Big[2m\cdot(2m-1)+6m-8m\Big(e^{2\pi i/3}+e^{4\pi i/3}\Big)\Big]\\
&=\frac{m}{3}+1=2b_{\mathfrak{I}^3_m}-2.
\end{align*}
\end{proof}

\subsection{Deformations of complex structure}
\label{DCS}
In this subsection, we give the proof of Theorem~\ref{t2}, and discuss the 
formula \eqref{dimform}.
\begin{proof}[Proof of Theorem \ref{t2}]
From Proposition \ref{sfk_deformation_proposition}, 
for any finite non-cyclic subgroup $\Gamma \subset {\rm{U}}(2)$ containing no 
complex reflections, we see that $(\mathcal{O}(-2m), g_{LB})$ 
always has $H^{sfk}_{\Gamma'}$ of positive dimension. As in \cite{HondaOn, Honda_2014}, 
using equivariant deformation theory of the twistor space of LeBrun 
metrics, the LeBrun metrics consequently have small deformations of complex 
structure admitting scalar-flat K\"ahler ALE metrics, which are 
invariant under the group $\Gamma'$. Choose any such deformed metric, 
and call it~$\tilde{g}$. The quotient of 
$(\mathcal{O}(-2m), \tilde{g})$ by $\Gamma'$ will then be a complex ALE 
orbifold with the same types of singular points as before. 
Consequently, the entire construction in
the proof of Theorem~\ref{t1} can be carried out, using one of these deformations 
as the base metric, to construct scalar-flat K\"ahler ALE metrics
with respect to a small deformation of complex structure
(it is important to note that the proof of Theorem~\ref{SFK_gluing_theorem}
did not assume the existence of holomorphic 
coordinates at infinity on $X_2$). 
Note, the resulting space is diffeomorphic to the minimal 
resolution, but is no longer a minimal resolution,  
since the central rational curve is no longer holomorphic. 

Next, notice that each of the corresponding deformations of complex structure has a 
K\"ahler cone of real dimension $k_{\Gamma}$. If one of these K\"ahler classes
admits a scalar-flat K\"ahler ALE metric near the metric constructed in 
Theorem~\ref{t1}, then it is locally unique; this follows from 
invertibility of the linearized operator shown in the proof 
of Theorem~\ref{t1}. We also use the following: we know there is at least 
one scalar-flat K\"ahler ALE metric with respect to the deformed complex structure, 
so invertibility of the linearized operator as shown in the proof 
of Theorem~\ref{t1} implies that there is an open set in the
K\"ahler cone of K\"ahler classes admitting scalar-flat 
K\"ahler ALE metrics. This is completely analogous to the 
situation considered in \cite{LeBrun_Simanca}; the details are
similar and are omitted.  

The formula \eqref{mdimform} follows since the deformations arisings from $H^{sfk}_{\Gamma'}$ are transverse to the K\"ahler cone, which holds because Honda's deformations of the LeBrun negative mass metrics on $\mathcal{O}(-2m)$ are traceless and do not include scalings of the metrics (the K\"ahler cone is $1$-dimensional in this case) \cite{HondaOn}.
In other words, the parameter space for scalar-flat K\"ahler deformations
of a LeBrun metric on $\mathcal{O}(-n)$ (which is $(n-1)$-dimensional) 
is naturally isomorphic to the parameter space of the Kuranishi family of complex
structures of the Hirzebruch surface $\mathbb{F}_n$, and therefore they are not relevant 
to deformations of a scalar-flat K\"ahler metric caused by moving a K\"ahler class.

\end{proof}

We next briefly discuss the dimension formula \eqref{dimform}. The following argument 
is known, but we give an outline of the proof as it is not easily found by a non-algebraic geometer.
Given a minimal resolution $\tilde{X}$ of $\CC^2 / \Gamma$ 
with exceptional divisor $E= \cup_{i = 1}^k E_i$, with rational curves $E_i$, 
we have the exact sequence 
\begin{align}
0 \rightarrow Der_E ( \tilde{X}) \rightarrow \Theta_{\tilde{X}} 
\rightarrow \oplus_{i = 1}^k \mathcal{O}_{E_i} (E_i) \rightarrow 0,
\end{align}
where $Der_E ( \tilde{X})$ is sheaf of vector fields on $\tilde{X}$ 
dual to $\Omega^1_{\tilde{X}}(\log(E))$, see \cite{Kawamata}. 
We next examine the following portion of 
the associated long exact sequence in cohomology 
\begin{align}
H^1( \tilde{X}, Der_E ( \tilde{X}))
\rightarrow  H^1( \tilde{X},  \Theta_{\tilde{X}}) 
\rightarrow  \oplus_{i = 1}^k H^1 (  \mathcal{O}_{E_i} (E_i)) 
\rightarrow H^2 ( Der_E ( \tilde{X})).
\end{align}
By Serre duality, 
\begin{align}
\begin{split}
\dim( H^1 (  \mathcal{O}_{E_i} (E_i)))  &= \dim( H^0 (\mathbb{P}^1, K \otimes N_{E_i}^*) ) \\
&= \dim (H^0 (\mathcal{O}(e_i -2)) = e_i -1,
\end{split}
\end{align} 
where $E_i \cdot E_i = - e_i$. 
Since $\dim_{\CC}(\tilde{X}) =2$, by Siu's vanishing theorem \cite{Siu}, 
\begin{align}
H^2 ( Der_E ( \tilde{X})) = 0.
\end{align}
Finally, using results from \cite{Brieskorn, Laufer1973, Wahl1975} (see also 
\cite{Behnke_Knorrer}), there
are no nontrivial deformations of $\tilde{X}$ preserving the entire tree
of rational curves, so we have
\begin{align}
H^1( \tilde{X}, Der_E ( \tilde{X})) =0.
\end{align}
Combining the above, this implies that 
\begin{align}
\dim (H^1( \tilde{X},  \Theta_{\tilde{X}}))  = \sum_{i = 1}^k (e_i - 1).
\end{align}

\section{Extremal K\"ahler metrics on rational surfaces}
\label{ca_compactifications}

We next discuss complex analytic compactifications of the Brieskorn 
minimal resolutions. For this, recall the canonical Bochner-K\"ahler metrics on weighted projective spaces, as well as the notion of conformal compactification discussed in Section~\ref{BK_WPS}.  Also, recall that $(\CP^2_{(1,1,n)},g_{BK})$ and $(\mathcal{O}(-n),g_{LB})$ are related via K\"ahler conformal compactification/conformal blow-up \cite{Joyce1991,DabkowskiLock}.

\subsection{Proof of Proposition \ref{cac1intro}}
We consider the weighted projective space $\CP^2_{(1,1,2m)}$ and 
have the following analogue of Theorem~\ref{quotient_theorem}.  
The group $\U(2)$ acts as holomorphic automorphisms. 
For each non-cyclic finite subgroup $\Gamma\subset{\rm U}(2)$ containing no 
complex reflections, the quotient $(\CP^2_{(1,1,2m)},g_{BK})/\Gamma'$ has 
four singularities: one at the point $[0,0,1]$ 
with orbifold group $\Gamma$, and three on the $2$-sphere $\Sigma = [z_0,z_1,0]$ (which, once resolved, 
has self-intersection number $b_{\Gamma}' > 0$ with respect to 
the complex orientation on the weighted projective space) with cyclic orbifold groups orientation reversed conjugate to those given in Theorem \ref{quotient_theorem} for the corresponding $\Gamma$. (Recall that the canonical Bochner-K\"ahler metrics and LeBrun negative mass metrics are K\"ahler with respect to orientation reversed complex structures.)  
Consequently, these orbifold singularities are
of type $L(\beta_i - \alpha_i,\beta_i)$ for $i=1,2,3$.
It is not difficult to see that $\{ \CP^2_{(1,1,2m)} \setminus \Sigma\} / \Gamma'$ is 
biholomorphic to $\CC^2 / \Gamma$.
Let $b^i_j$ be the Hirzebruch-Jung string associated 
to $L(\beta_i - \alpha_i, \beta_i)$ for $i = 1, 2, 3$, and $j = 1 \dots \ell_i$.
Then the compactification is found by adding the 
tree of rational curves at infinity in Figure~\ref{cmpfig}. 
\begin{figure}[h]
\setlength{\unitlength}{2cm}
\begin{picture}(7,3.1)(.5,0)
\linethickness{.3mm}
\put(1,0){\line(1,0){1}}
\put(1,0){\circle*{.1}}
\put(.75,-.25){$-b_{\ell_1}^1$}
\put(2,0){\circle*{.1}}
\put(1.75, -.25){$-b_{\ell_1-1}^1$}
\multiput(2,0)(0.1,0){10}
{\line(1,0){0.05}}
\put(3,0){\circle*{.1}}
\put(2.75, -.25){$-b_{\ell}^1$}
\put(3,0){\line(1,0){1}}
\put(4,0){\circle*{.1}}
\put(3.75, -.25){$+b_{\Gamma}'$}
\put(4.75, -.25){$-b_1^2$}
\put(4,0){\line(1,0){1}}
\put(5,0){\circle*{.1}}
\put(5,0){\line(1,0){1}}
\put(6,0){\circle*{.1}}
\put(5.75,-.25){$-b_2^2$}
\put(7,0){\circle*{.1}}
\put(6.75, -.25){$-b_{\ell_2}^2$}
\multiput(6,0)(0.1,0){10}
{\line(1,0){0.05}}
\put(4,0){\line(0,1){1}}
\put(4,1){\circle*{.1}}
\put(3.55, .75){$-b^3_1$}
\put(4,1){\line(0,1){1}}
\put(4,2){\circle*{.1}}
\put(3.55, 1.75){$-b^3_2$}
\multiput(4,2)(0,0.1){10}
{\line(0,1){0.05}}
\put(4,3){\circle*{.1}}
\put(3.55, 2.75){$-b^3_{\ell_3}$}
\end{picture}
\vspace{5mm}
\caption{The compactification divisor.}\label{cmpfig}
\end{figure}
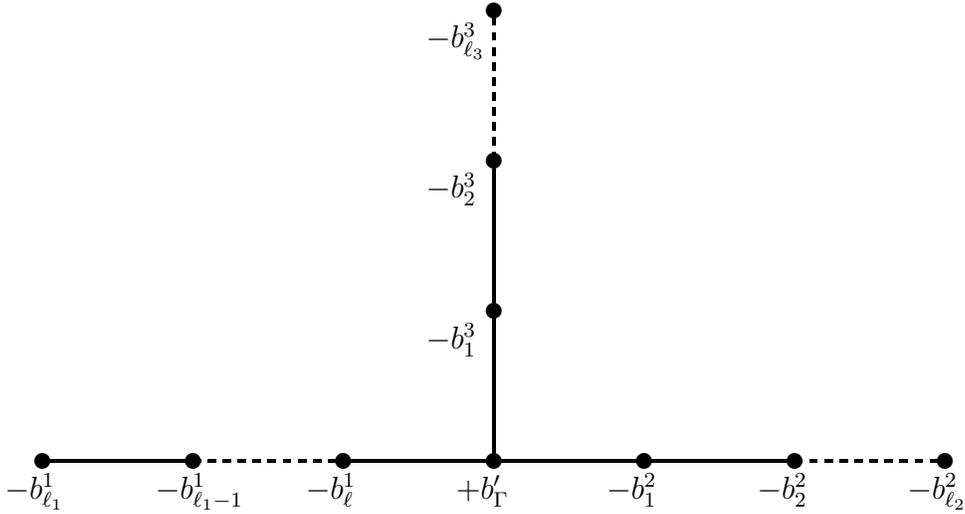

Note that by \cite[Lemma 4.1]{LeBrunMaskit}, a
scalar-flat K\"ahler ALE metric always has a complex analytic compactification 
which is a rational surface. This also follows directly from the above construction since
the compactification is birational to the quotient $\CP^2_{(1,1,2m)}/\Gamma'$, and any finite quotient of a rational surface is rational.
In particular it is diffeomorphic 
to either $S^2 \times S^2$ or $\CP^2 \# \mathfrak{k} \overline{\CP}^2$ for some 
$\mathfrak{k} \geq 0$, and this finishes the proof. 

\subsection{Proof of Theorem \ref{cacintro}}
We begin by noting that the complex analytic compactification of $\mathcal{O}(-n)$ is a 
Hirzebruch surface $\mathbb{F}_n$, obtained by adding the $[+n]$ curve at infinity.
With a minor modification of the above process, we start the process  
with one of Calabi's extremal K\"ahler metrics $g_C$ on the 
Hirzebruch surface $\mathbb{F}_{2m}$ \cite{Calabi1}. This has an isometric 
${\rm{U}}(2)$ action. Arguing as in Theorem \ref{quotient_theorem}, it is not
hard to see that the quotient by the subgroup $\Gamma$ 
produces $6$ singular points. There are $3$ on the quotient of the $[-2m]$ curve 
of type $L(\alpha_i,\beta_i)$ and $3$ on the quotient of the $[2m]$ curve of type 
$L(\beta_i - \alpha_i, \beta_i)$, for $i = 1,2,3$. Note that there is a 
moduli of Calabi's metrics, and the area of the $[2m]$ and $[-2m]$ curves 
can be chosen independently to be any two positive real numbers. 

The existence result then follows from a recent gluing theorem for extremal K\"ahler metrics
due to Arezzo-Lena-Mazzieri \cite{ALM_resolution}.  It is interesting to remark, however, that we do not actually need the full generality of this theorem.  In our case, using the results of \cite{MCC}, we simply note that
the quotient of  $(\mathbb{F}_{2m}, g_C)$ by $\Gamma$ 
has identity component of the isometry group exactly $S^1$, so the Lie algebra 
of Hamiltonian biholomorphisms is $1$-dimensional.  It is therefore trivial to overcome the obstructions, and it follows, as in Arrezo-Pacard-Singer \cite{APSinger}, that we may obtain extremal K\"ahler metrics on the resolution
by attaching Calderbank-Singer metrics at each singular point.
Furthermore, the metrics obtained will admit a holomorphic isometric
$S^1$-action. We note that the Calderbank-Singer metrics come in moduli, 
and the area of each rational curve can be chosen arbitrarily. 
If any of these spaces happens to be Ricci-flat, then the only 
change is that the scaling of this space must be chosen 
differently, which accounts for the factor of $\epsilon^4$ in 
the definition of $f$ and $f'$ above. 

 Finally, since any Calabi metric $g_C$ on $\mathbb{F}_{2m}$ for $m \geq 1$ 
does not have constant scalar curvature, and the extremal metrics we obtain  
are small perturbations of the Calabi metric away from the 
singularities, these extemal metrics do not have constant 
scalar curvature either (for sufficiently small gluing parameter $\varepsilon$).

\begin{remark} {\em
In the above proof, instead of starting with the 
Hirzebruch surface, we could have started with a Bochner-K\"ahler
metric on $\CP^2_{(1,1,2m)}$ (see below), taken the quotient by $\Gamma'$, and 
then used one of the scalar-flat K\"ahler metrics from Theorem~\ref{t1}
to resolve one of the resulting singularities. However, doing this would make 
the $[-b]$ curve have small area. So starting with the 
Calabi metric allows us to obtain existence for a larger
collection of K\"ahler classes. This is of course related
to the fact that the Calabi metrics on $\mathbb{F}_n$
limit to the Bochner-K\"ahler metric on $\CP^2_{(1,1,n)}$ as the 
area of the $[-n]$ curve shrinks to zero \cite{Gauduchon}. 
}
\end{remark}


\section{Hyperk\"ahler metrics}
\label{hyperkahler}

We will now show that
the spaces obtained by our construction for $\Gamma\subset{\rm SU}(2)$ are in fact hyperk\"ahler (and are therefore ALE of order $4$, see \cite{BKN, ct}). 
Note that it follows from \cite{Streets, AV12}
that all of the spaces obtained in the proof of Theorem~\ref{t1}, 
are necessarily ALE of order at least $2$.  Since these spaces are scalar-flat K\"ahler, hence anti-self-dual, we begin with the following Lemma.
\begin{lemma} 
\label{sfasd} 
If $(X,g)$ is a scalar-flat anti-self-dual ALE space with $H^1(X;\mathbb{R})=0$, 
then
\begin{align*}
b_2^-(X)  \geq 2 - \frac{2}{|\Gamma|} - 3 \eta (S^3/\Gamma)
\end{align*}
with equality if and only if $g$ is Ricci-flat and $b_2^+(X)=0$.
\end{lemma}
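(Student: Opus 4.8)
The plan is to play off against each other the Gauss--Bonnet--Chern formula and the Atiyah--Patodi--Singer signature theorem, in the versions appropriate to ALE $4$-manifolds, after imposing the two curvature hypotheses that the scalar curvature and the self-dual Weyl tensor vanish.

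I would first record the two curvature identities for an ALE $4$-manifold $(X^4,g)$ with group $\Gamma$ at infinity. The ALE decay makes $|\mathrm{Rm}|^2$ and $|\mathrm{Ric}_0|^2$ integrable (so all curvature integrals below converge) and makes the truncating geodesic spheres $S^3_R/\Gamma$ contribute only their flat limiting values as $R\to\infty$; one gets the Gauss--Bonnet--Chern formula with orbifold defect
\begin{align*}
\chi(X)=\frac{1}{8\pi^2}\int_X\Big(|W_+|^2+|W_-|^2+\tfrac{R_g^2}{24}-\tfrac{|\mathrm{Ric}_0|^2}{2}\Big)\,dV+\frac{1}{|\Gamma|},
\end{align*}
where $\mathrm{Ric}_0$ is the trace-free Ricci tensor, and the signature formula
\begin{align*}
\tau(X)=\frac{1}{12\pi^2}\int_X\big(|W_+|^2-|W_-|^2\big)\,dV+\eta(S^3/\Gamma),
\end{align*}
where $\eta(S^3/\Gamma)$ is the eta invariant of the odd signature operator on $S^3/\Gamma$, taken with the orientation carried by the end (the sign here is pinned down by this orientation; a convenient calibration is $\eta(S^3/\mathbb{Z}_p)=(p-1)(p-2)/(3p)$ for the diagonal action, so that the ADE hyperk\"ahler resolutions will saturate the bound). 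One also uses that in dimension three $\eta$ is scale invariant, so that $\eta$ of the truncating hypersurface converges to $\eta$ of the round $S^3/\Gamma$.

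Next I would form the combination $2\chi(X)+3\tau(X)$. On the curvature side the $|W_-|^2$ contributions cancel identically (so no hypothesis on $W_-$ is needed), and imposing $R_g\equiv0$ (scalar-flat) and $W_+\equiv0$ (anti-self-dual) leaves
\begin{align*}
2\chi(X)+3\tau(X)=-\frac{1}{8\pi^2}\int_X|\mathrm{Ric}_0|^2\,dV+\frac{2}{|\Gamma|}+3\,\eta(S^3/\Gamma).
\end{align*}
On the topological side, $H^1(X;\mathbb{R})=0$ forces $b_1=0$; an ALE space moreover has $b_0=1$, $b_4=0$, and $b_3=0$ (the exact sequence relating compactly supported and ordinary cohomology shows $H^1_c(X)\hookrightarrow H^1(X)=0$, while $H^1_c(X)\cong H_3(X)$ by Lefschetz duality). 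Hence $\chi(X)=1+b_2^+(X)+b_2^-(X)$ and $\tau(X)=b_2^+(X)-b_2^-(X)$, so $2\chi(X)+3\tau(X)=2+5\,b_2^+(X)-b_2^-(X)$. Equating and solving for $b_2^-$ gives
\begin{align*}
b_2^-(X)=2-\frac{2}{|\Gamma|}-3\,\eta(S^3/\Gamma)+5\,b_2^+(X)+\frac{1}{8\pi^2}\int_X|\mathrm{Ric}_0|^2\,dV.
\end{align*}
Since $b_2^+(X)\ge0$ and the integral is nonnegative, the stated inequality follows, with equality precisely when $b_2^+(X)=0$ and $\mathrm{Ric}_0\equiv0$; the latter, together with $R_g\equiv0$, is exactly the condition that $g$ be Ricci-flat, which gives the equality clause of the Lemma.

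The step that will require the most care is fixing the precise form of the two ALE index formulas — the value $1/|\Gamma|$ of the Gauss--Bonnet defect, and especially the sign and normalization of the $\eta$-term relative to the orientation of the ALE end, together with the check that a conical (rather than cylindrical) asymptotic geometry produces exactly these boundary contributions. This is by now standard and underlies the usual topological bookkeeping for ALE gravitational instantons; it can also be calibrated against known examples — the LeBrun metrics on $\mathcal{O}(-p)$, for which $b_2^-=1$ and equality holds only at $p=2$, and the Kronheimer hyperk\"ahler metrics on the $A_{n-1}$ resolutions, for which equality holds, consistently with those metrics being Ricci-flat with $b_2^+=0$.
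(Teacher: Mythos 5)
Your proof is correct and follows essentially the same route as the paper's: combine the ALE Chern--Gauss--Bonnet and signature formulas in the combination $2\chi+3\tau$, impose $R_g\equiv 0$ and $W^+\equiv 0$, and compare with the topological identity $2\chi+3\tau=2+5b_2^+-b_2^-$ coming from $H^1(X;\RR)=0$. The only slip is the parenthetical calibration $\eta(S^3/\mathbb{Z}_p)=(p-1)(p-2)/(3p)$: in the orientation convention your formulas require (the one in which the $A_{p-1}$ multi-Eguchi-Hanson spaces saturate the bound, e.g.\ $\eta=-2/9$ for $p=3$), this eta invariant is $-(p-1)(p-2)/(3p)$; since this is only an aside, it does not affect the argument.
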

\begin{proof}
Since $(X,g)$ is an ALE space with $H^1(X;\mathbb{R})=0$, it is clear that
\begin{align}
\tau_{top}(X)=b_2^+-b_2^-\phantom{=}\text{and}\phantom{=}\chi_{top}(X)=1+b_2^++b_2^-.
\end{align}
Now, consider the ALE versions of the signature and Chern-Gauss-Bonnet theorems, which for an ALE space $(M,g)$ with group at infinity $\Gamma\subset {\rm SO}(4)$, are given as follows.
\begin{align}
\label{ALE_signature_euler}
\tau_{top}(M)=\tau_{orb}(M)+\eta(S^3/\Gamma)\phantom{=}\text{and}\phantom{=}\chi_{top}(M)=\chi_{orb}(M)+\frac{1}{|\Gamma|},
\end{align}
where $\tau_{orb}(M)$ is the orbifold signature defined by
\begin{align}
\tau_{orb}(M)=\frac{1}{12\pi^2}\int_M\Big(|W^+_g|^2-|W^-_g|^2\Big)dV_g,
\end{align}
$\eta(S^3/\Gamma)$ is the eta-invariant 
and $\chi_{orb}(M)$ is the orbifold Euler characteristic defined by
\begin{align}
\label{chi_orb}
\chi_{orb}(M)=\frac{1}{8\pi^2}\int_M\Big(|W_g|^2-\frac{|E_g|^2}{2}+\frac{R^2_g}{24}\Big)dV_g.
\end{align}
Then, since $(X,g)$ is anti-self-dual, observe that
\begin{align}
3\tau_{top}(X)+2\chi_{top}(X)=5b_2^+-b_2^-+2=-\frac{1}{8\pi^2}\int_X|E_g|^2dV_g+\frac{2}{|\Gamma|}+3\eta(S^3/\Gamma).
\end{align}
Therefore
\begin{align}
b_2^-=\frac{1}{8\pi^2}\int_X|E_g|^2dV_g-\frac{2}{|\Gamma|}-3\eta(S^3/\Gamma)+2+5b_2^+,
\end{align}
so we find that
\begin{align}
b_2^-(X)  \geq 2 - \frac{2}{|\Gamma|} - 3 \eta (S^3/\Gamma),
\end{align}
with equality if and only if $E_g\equiv 0$, hence $Ric_g\equiv 0$ since $R_g\equiv 0$, and $b_2^+(X)=~0$.
\end{proof}

\begin{proposition}
\label{Kronheimer_sfk_gluing}
For $\Gamma$ a binary polyhedral group, the construction of Theorem \ref{t1} yields a hyperk\"ahler ALE space which admits a holomorphic isometric circle action.
\end{proposition}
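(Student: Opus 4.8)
The plan is to obtain the hyperk\"ahler property from the rigidity in Lemma~\ref{sfasd}, so that, apart from one classical eta-invariant identity, nothing is needed beyond Theorem~\ref{t1}. Let $\Gamma\subset{\rm SU}(2)$ be a binary polyhedral group; then $m=1$ in the notation of Section~\ref{quotients_g_LB}, and the construction of Theorem~\ref{t1} carried out in Section~\ref{t1_proof} is built from the Eguchi-Hanson metric $(\mathcal{O}(-2),g_{LB})$ and, by Remark~\ref{toric_GH}, from toric multi-Eguchi-Hanson metrics, both of which are hyperk\"ahler ALE manifolds --- a fact consistent with, though not needed for, what follows. What that construction supplies is a scalar-flat K\"ahler --- hence anti-self-dual --- ALE metric $g$ on the minimal resolution $\tilde{X}$ of $\CC^2/\Gamma$ admitting a holomorphic isometric circle action. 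The circle action is therefore already in hand, and it remains only to show that $g$ is hyperk\"ahler.

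First I would record the topology of $\tilde{X}$. Since $\Gamma\subset{\rm SU}(2)$, the minimal resolution is the Du Val resolution of an ADE singularity: $\tilde{X}$ deformation retracts onto the exceptional divisor, a simply connected tree of rational curves, so $\tilde{X}$ is simply connected and $H^1(\tilde{X};\RR)=0$; moreover the intersection form on $H_2(\tilde{X};\RR)$ is the negative of a Cartan matrix of ADE type, hence negative definite, giving $b_2^+(\tilde{X})=0$ and $b_2^-(\tilde{X})=k_\Gamma$. Applying Lemma~\ref{sfasd} to $(\tilde{X},g)$ then yields
\begin{align*}
k_\Gamma \;=\; b_2^-(\tilde{X}) \;\geq\; 2 - \frac{2}{|\Gamma|} - 3\,\eta(S^3/\Gamma),
\end{align*}
with equality forcing $g$ to be Ricci-flat (and $b_2^+=0$, which we already know).

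The heart of the matter is to verify that this inequality is in fact an equality, i.e.\ that $k_\Gamma = 2 - 2/|\Gamma| - 3\,\eta(S^3/\Gamma)$ for every binary polyhedral $\Gamma$. This is precisely the classical eta-invariant computation for ADE spherical space forms underlying the theory of ALE gravitational instantons; alternatively it follows at once by applying the equality case of Lemma~\ref{sfasd} to Kronheimer's hyperk\"ahler ALE metric on the same $\tilde{X}$, which is Ricci-flat with $b_2^+=0$. I expect this purely arithmetic verification to be the only real content of the argument, everything else being bookkeeping. Granting it, $g$ is Ricci-flat. Finally, a Ricci-flat K\"ahler metric on a complex surface has holonomy in ${\rm SU}(2)={\rm Sp}(1)$ --- on the simply connected $\tilde{X}$ the flat canonical bundle is holomorphically trivial, producing a parallel holomorphic $2$-form that, together with the K\"ahler form, furnishes the hyperk\"ahler triple --- so $g$ is hyperk\"ahler. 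Combined with the circle action from Theorem~\ref{t1}, this completes the proof.
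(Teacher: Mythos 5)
Your proposal is correct and takes essentially the same route as the paper: apply Lemma~\ref{sfasd} to the scalar-flat K\"ahler (hence anti-self-dual) ALE metric produced by Theorem~\ref{t1}, note that equality holds when $\Gamma$ is binary polyhedral, and conclude that the metric is Ricci-flat and therefore hyperk\"ahler, the circle action coming for free from Theorem~\ref{t1}. The only cosmetic difference is how the identity $k_\Gamma = 2 - 2/|\Gamma| - 3\,\eta(S^3/\Gamma)$ is justified: you invoke the classical eta-invariant computation (or, equivalently, the equality case of Lemma~\ref{sfasd} applied to Kronheimer's metric on the same resolution), while the paper simply inserts the values of $\eta(S^3/\Gamma)$ from Nakajima together with $b_2^-$.
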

\begin{proof}
The proof of Proposition \ref{Kronheimer_sfk_gluing} follows easily from Lemma \ref{sfasd} below as follows. The spaces obtained in Section \ref{t1_proof} are scalar-flat K\"ahler, hence anti-self-dual, and simply connected.  From inserting
the respective $\eta(S^3/\Gamma)$, given in \cite{Nakajima}, and $b_2^-$ into the inequality in Lemma \ref{sfasd}, we find that these spaces are Ricci-flat, and therefore hyperk\"ahler, if and only if $\Gamma\subset {\rm SU}(2)$.
\end{proof}

\begin{remark}
{\em
It is interesting to note that the only ALE spaces needed for in this construction of hyperk\"ahler metrics are the Gibbons-Hawking multi-Eguchi-Hanson metrics.
}
\end{remark}

\begin{remark}{\em
For the weighted projective space $\CP^2_{(r,q,q+r)}$, by a result of Derdzinski \cite{Derdzinski}, the conformal metric $R^{-2}g_{BK}$ is Einstein. A simple calculation shows 
that it is Ricci-flat. This turns out to be the conformal compactification 
of a multi-Eguchi-Hanson metric with 2 points,  one of multiplicity 
$q$ and the other of multiplicity $r$. 
Again, these are K\"ahler with respect to reverse-oriented complex
structures.}
\end{remark}

\subsection{Anti-self-dual gluing}
 We first discuss an ``inverse'' to the process of conformal compactification 
described above.  Given a compact Riemannian orbifold $(\widehat{X},\widehat{g})$ with nonnegative Yamabe invariant, and letting $G_p$ denote the Green's function of the conformal Laplacian associated to a point $p$, the space $(\widehat{X}\setminus\{p\},G_p^2\widehat{g})$ is a complete noncompact scalar-flat anti-self-dual ALE orbifold.  A coordinate system at infinity arises from using inverted normal coordinates in the metric $\widehat{g}$ around $p$.  We refer to this as a {\textit{conformal blow-up}}.

 In Proposition \ref{Kronheimer_sfk_gluing}, we obtained hyperk\"ahler ALE metrics with groups at infinity the binary polyhedral groups (the ``DE'' in ``ADE'').  For these cases, the singularities on the $\CP^1$ at the origin of Theorem \ref{quotient_theorem} are all of type $L(-1,\beta_i)$, and we used the scalar-flat K\"ahler gluing theorem of Section~\ref{SFK_gluing} to glue on the appropriate multi-Eguchi-Hanson metrics (the ``A'' in ``ADE'') to the quotients of $(\mathcal{O}(-2),g_{LB})$.  However, this construction 
only produces {\textit{some}} hyperk\"ahler metrics. 
We will next show that we can obtain examples for {\textit{all}}  small 
deformations of complex structure, and therefore an open set in the moduli space, by using the following gluing result for (anti-)self-dual orbifolds.
\begin{theorem}[\cite{DonaldsonFriedman,Floer,LeBrunSinger,KovalevSinger,AcheViaclovsky2}]
\label{SD_gluing}
Let $(X_1,[g_1])$ and $(X_2,[g_2])$ be self-dual conformal structures on compact $4$-dimensional orbifolds with $H^2_{SD}$ of the respective self-dual deformation complexes vanishing.  Let $p_1\in X_1$ and $p_2\in X_2$ be orbifold points with the respective orbifold groups $\Gamma_1$ and
$\Gamma_2\subset{\rm SO}(4)$ orientation reversed conjugate in the sense that there is an orientation-reversing intertwining map between these groups.  Then, the orbifold connect sum $X_1\# X_2$, taken at these points, admits self-dual conformal structures.
\end{theorem}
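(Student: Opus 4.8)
The plan is to prove this by an analytic gluing construction in the spirit of Kovalev--Singer and Ache--Viaclovsky, adapted to the orbifold category; I will also indicate the parallel twistor-theoretic argument of Donaldson--Friedman. First I would fix self-dual metric representatives $g_1\in[g_1]$ and $g_2\in[g_2]$ and use the conformal blow-up construction (replacing a punctured neighborhood of $p_i$ by the Green's-function metric end, which is asymptotically flat and modeled on $(\RR^4\setminus B)/\Gamma_i$) to exhibit each $X_i\setminus\{p_i\}$ as carrying an ALE end with group $\Gamma_i$. The hypothesis that $\Gamma_1$ and $\Gamma_2$ are orientation-reversed conjugate is exactly what is needed to identify these two model ends by an orientation-reversing intertwining, so one can excise small balls and glue $X_1\setminus\{p_1\}$ to $X_2\setminus\{p_2\}$ across a neck of diameter $\sim\varepsilon$, producing a family of conformal structures $[g_\varepsilon]$ on the orbifold connect sum $X_1\# X_2$ which are self-dual off the neck and satisfy $\|W^+_{g_\varepsilon}\|=O(\varepsilon^2)$ in a suitable weighted H\"older norm.

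The second step is to analyze the linearized self-dual operator $D_{g_\varepsilon}$, the second map in the self-dual deformation complex, which sends a conformal deformation to the induced infinitesimal change of the self-dual Weyl curvature, on weighted H\"older spaces over $X_1\# X_2$. I would build a parametrix for $D_{g_\varepsilon}$ by patching together (i) a right inverse on $(X_1\setminus\{p_1\},[g_1])$, (ii) a right inverse on $(X_2\setminus\{p_2\},[g_2])$, and (iii) an explicit right inverse for the model self-dual operator on flat $(\RR^4\setminus\{0\})/\Gamma_i$, which has no weighted kernel or cokernel for a generic weight. Here the vanishing hypotheses $H^2_{SD}(X_1,[g_1])=H^2_{SD}(X_2,[g_2])=0$ enter: they guarantee surjectivity of the pieces (i) and (ii) in the relevant weighted spaces, and a Mayer--Vietoris/cutoff argument then upgrades the parametrix to an honest right inverse $Q_\varepsilon$ of $D_{g_\varepsilon}$ with norm bounded uniformly in $\varepsilon$.

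With the uniform right inverse in hand, the third step is routine: write the equation $W^+(g_\varepsilon+h)=0$ as $D_{g_\varepsilon}h = -W^+(g_\varepsilon)-N_{g_\varepsilon}(h)$ with $N_{g_\varepsilon}$ the quadratic-and-higher remainder, and solve the fixed-point problem $h=-Q_\varepsilon\bigl(W^+(g_\varepsilon)+N_{g_\varepsilon}(h)\bigr)$ by the contraction mapping principle; smallness of $\|W^+(g_\varepsilon)\|=O(\varepsilon^2)$ against the uniform bound on $Q_\varepsilon$ closes the argument for $\varepsilon$ sufficiently small, and $g_\varepsilon+h$ is the desired self-dual orbifold metric on $X_1\# X_2$. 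Alternatively, one can form the singular twistor space $Z_1\cup_Q Z_2$ by gluing the orbifold twistor spaces of $X_1$ and $X_2$ along the twistor lines over $p_1$ and $p_2$; the obstruction to smoothing is carried by an $H^2$ of the relevant normal sheaf which the Penrose correspondence identifies with $H^2_{SD}$ of the summands, so the vanishing hypothesis produces a smoothing $Z_t$ that still carries a real structure and a four-parameter family of twistor lines, hence defines a self-dual conformal structure on $X_1\# X_2$.

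The main obstacle is the second step: obtaining a right inverse for $D_{g_\varepsilon}$ whose norm does not blow up as the neck collapses. This requires a careful choice of weight so that the model neck operator is invertible and the decay and growth rates match across the three regions, and it requires ruling out small eigenvalues of $D_{g_\varepsilon}^{\ast}D_{g_\varepsilon}$ in the limit $\varepsilon\to 0$ --- which is precisely where $H^2_{SD}=0$ is used. The orientation-reversed conjugacy of $\Gamma_1$ and $\Gamma_2$ is used both to make the approximate solution exist at all and to guarantee that the glued orientation is consistent.
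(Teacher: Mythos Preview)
The paper does not give its own proof of this theorem: it is quoted from the cited references \cite{DonaldsonFriedman,Floer,LeBrunSinger,KovalevSinger,AcheViaclovsky2} and used as a black box in Section~\ref{hyperkahler}. Your proposal is a faithful outline of what those references actually do --- the analytic gluing argument you describe (pre-gluing, uniform right inverse for the linearized self-dual operator built from the pieces using $H^2_{SD}=0$, then contraction mapping) is exactly the Floer/LeBrun--Singer/Kovalev--Singer approach, with the orbifold adaptation carried out in \cite{AcheViaclovsky2}, and the twistor-space smoothing you sketch at the end is the original Donaldson--Friedman argument. So there is nothing to compare: your sketch is correct and matches the literature the paper invokes.
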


We will replace the scalar-flat K\"ahler gluing result in Theorem \ref{SFK_gluing} with 
Theorem~\ref{SD_gluing}.
For any binary polyhedral group $\Gamma'\subset {\rm SU}(2)$, consider the compact 
self-dual K\"ahler orbifold $(\CP^2_{(1,1,2)},g_{BK})/\Gamma'$, as discussed above, which has four 
orbifold points: one with orbifold group $\Gamma$ at the point of compactification, 
and three on the $2$-sphere at infinity with cyclic orbifold groups $L(1,\beta_i)$ for $i=1,2,3$.
Recall that these are just the K\"ahler conformal compactifications of the 
corresponding quotients of the Eguchi-Hanson metric.  
LeBrun-Maskit proved that $H^2_{SD}$ 
of the compactification of any scalar-flat K\"ahler ALE metric vanishes
 \cite[Theorem 4.2]{LeBrunMaskit}.  Therefore, to each orbifold point with 
group $L(1,\beta_i)$ on the $2$-sphere at infinity of $(\CP^2_{(1,1,2m)},g_{BK})/\Gamma'$, 
Theorem~\ref{SD_gluing} can be used to attach the conformal 
compactification of the multi-Eguchi-Hanson metric with group $L(-1,\beta_i)$ at 
infinity, and obtain a self-dual orbifold with a single orbifold point of type 
$\Gamma$, and positive Yamabe invariant.  Since the Yamabe invariant is positive, 
we can take the conformal blow-up at the orbifold point to obtain a scalar-flat 
anti-self-dual ALE metric on the minimal resolution of $\CC^2/\Gamma$.  
Once again, notice that the only building blocks here were the 
multi-Eguchi-Hanson metrics.

Finally, for each binary polyhedral group $\Gamma$, we examine these scalar-flat anti-self-dual ALE metrics using Lemma~\ref{sfasd}.  As in the proof of Proposition \ref{Kronheimer_sfk_gluing}, from inserting $\eta(S^3/\Gamma)$ 
given in \cite{Nakajima} and $b_2^-$ into the inequality of Lemma \ref{sfasd}, it is clear that these metrics are necessarily Ricci-flat, and are therefore hyperk\"ahler ALE metrics.

\bibliography{Smorgasbord_references}

\def\cprime{$'$}
\providecommand{\bysame}{\leavevmode\hbox to3em{\hrulefill}\thinspace}
\providecommand{\MR}{\relax\ifhmode\unskip\space\fi MR }
\providecommand{\MRhref}[2]{%
  \href{http://www.ams.org/mathscinet-getitem?mr=#1}{#2}
}
\providecommand{\href}[2]{#2}
\begin{thebibliography}{ALM15b}

\bibitem[ALM15a]{ALM_kummer}
Claudio Arezzo, Riccardo Lena, and Lorenzo Mazzieri, \emph{On the {K}ummer
  construction for {K}csc metrics}, arXiv.org:1507.05105, 2015.

\bibitem[ALM15b]{ALM_resolution}
\bysame, \emph{On the resolution of extremal and constant scalar curvature
  {K}\"ahler orbifolds}, Int. Math. Res. Not. IMRN (2015), no.~RNV346, 1--38.

\bibitem[AP06]{API}
Claudio Arezzo and Frank Pacard, \emph{Blowing up and desingularizing constant
  scalar curvature {K}\"ahler manifolds}, Acta Math. \textbf{196} (2006),
  no.~2, 179--228.

\bibitem[AP09]{APII}
\bysame, \emph{Blowing up {K}\"ahler manifolds with constant scalar curvature.
  {II}}, Ann. of Math. (2) \textbf{170} (2009), no.~2, 685--738.

\bibitem[Apo90]{Apostol}
Tom~M. Apostol, \emph{Modular functions and {D}irichlet series in number
  theory}, second ed., Graduate Texts in Mathematics, vol.~41, Springer-Verlag,
  New York, 1990.

\bibitem[APS11]{APSinger}
Claudio Arezzo, Frank Pacard, and Michael Singer, \emph{Extremal metrics on
  blowups}, Duke Math. J. \textbf{157} (2011), no.~1, 1--51.

\bibitem[AR15]{Apostolov-Rollin}
Vestislav Apostolov and Yann Rollin, \emph{{ALE} scalar-flat {K}\"ahler metrics
  on non-compact weighted projective spaces}, arXiv.org:1510.02226, to appear
  in Math. Ann., 2015.

\bibitem[AV12]{AV12}
Antonio~G. Ache and Jeff~A. Viaclovsky, \emph{Obstruction-flat asymptotically
  locally {E}uclidean metrics}, Geom. Funct. Anal. \textbf{22} (2012), no.~4,
  832--877.

\bibitem[AV15]{AcheViaclovsky2}
\bysame, \emph{Asymptotics of the self-dual deformation complex}, J. Geom.
  Anal. \textbf{25} (2015), no.~2, 951--1000.

\bibitem[BK87]{Behnke_Knorrer}
Kurt Behnke and Horst Kn{\"o}rrer, \emph{On infinitesimal deformations of
  rational surface singularities}, Compositio Math. \textbf{61} (1987), no.~1,
  103--127.

\bibitem[BKN89]{BKN}
Shigetoshi Bando, Atsushi Kasue, and Hiraku Nakajima, \emph{On a construction
  of coordinates at infinity on manifolds with fast curvature decay and maximal
  volume growth}, Invent. Math. \textbf{97} (1989), no.~2, 313--349.

\bibitem[Boy86]{Boyer}
Charles~P. Boyer, \emph{Conformal duality and compact complex surfaces}, Math.
  Ann. \textbf{274} (1986), no.~3, 517--526.

\bibitem[BR95]{Behnke-Riemenschneider}
Kurt Behnke and Oswald Riemenschneider, \emph{Quotient surface singularities
  and their deformations}, Singularity theory ({T}rieste, 1991), World Sci.
  Publ., River Edge, NJ, 1995, pp.~1--54.

\bibitem[BR15]{BiquardRollin}
Olivier Biquard and Yann Rollin, \emph{Smoothing singular extremal {K}\"ahler
  surfaces and minimal {L}agrangians}, Advances in Math. \textbf{285} (2015),
  980--1024.

\bibitem[Bri68]{Brieskorn}
Egbert Brieskorn, \emph{Rationale {S}ingularit\"aten komplexer {F}l\"achen},
  Invent. Math. \textbf{4} (1967/1968), 336--358.

\bibitem[Bry01]{Bryant}
Robert~L. Bryant, \emph{Bochner-{K}\"ahler metrics}, J. Amer. Math. Soc.
  \textbf{14} (2001), no.~3, 623--715.

\bibitem[Bur86]{Burns}
Daniel Burns, \emph{Twistors and harmonic maps}, Talk in Charlotte, N.C.,
  October 1986.

\bibitem[Cal82]{Calabi1}
Eugenio Calabi, \emph{Extremal {K}\"ahler metrics}, Seminar on {D}ifferential
  {G}eometry, Ann. of Math. Stud., vol. 102, Princeton Univ. Press, Princeton,
  N.J., 1982, pp.~259--290.

\bibitem[Cal85]{Calabi2}
\bysame, \emph{Extremal {K}\"ahler metrics. {II}}, Differential geometry and
  complex analysis, Springer, Berlin, 1985, pp.~95--114.

\bibitem[CH14]{ConlonHeinIII}
Ronan~J. Conlon and Hans-Joachim Hein, \emph{Asymptotically conical
  {C}alabi-{Y}au manifolds, {III}}, arXiv.org:1405.7140, 2014.

\bibitem[CLW08]{ChenLeBrunWeber}
Xiuxiong Chen, Claude Lebrun, and Brian Weber, \emph{On conformally {K}\"ahler,
  {E}instein manifolds}, J. Amer. Math. Soc. \textbf{21} (2008), no.~4,
  1137--1168.

\bibitem[Cox40]{Coxeter_1940}
H.~S.~M. Coxeter, \emph{The binary polyhedral groups, and other generalizations
  of the quaternion group}, Duke Math. J. \textbf{7} (1940), 367--379.

\bibitem[Cox91]{Coxeter}
\bysame, \emph{Regular complex polytopes}, 2nd ed., Cambridge University Press,
  Cambridge, 1991.

\bibitem[Cro61]{Crowe}
D.~W. Crowe, \emph{The groups of regular complex polygons}, Canad. J. Math.
  \textbf{13} (1961), 149--156.

\bibitem[CS04]{CalderbankSinger}
David M.~J. Calderbank and Michael~A. Singer, \emph{Einstein metrics and
  complex singularities}, Invent. Math. \textbf{156} (2004), no.~2, 405--443.

\bibitem[CT94]{ct}
Jeff Cheeger and Gang Tian, \emph{On the cone structure at infinity of {R}icci
  flat manifolds with {E}uclidean volume growth and quadratic curvature decay},
  Invent. Math. \textbf{118} (1994), no.~3, 493--571.

\bibitem[Der83]{Derdzinski}
Andrzej Derdzi{\'n}ski, \emph{Self-dual {K}\"ahler manifolds and {E}instein
  manifolds of dimension four}, Compositio Math. \textbf{49} (1983), no.~3,
  405--433.

\bibitem[DF89]{DonaldsonFriedman}
S.~Donaldson and R.~Friedman, \emph{Connected sums of self-dual manifolds and
  deformations of singular spaces}, Nonlinearity \textbf{2} (1989), no.~2,
  197--239.

\bibitem[DG06]{DavidGauduchon}
Liana David and Paul Gauduchon, \emph{The {B}ochner-flat geometry of weighted
  projective spaces}, Perspectives in {R}iemannian geometry, CRM Proc. Lecture
  Notes, vol.~40, Amer. Math. Soc., Providence, RI, 2006, pp.~109--156.

\bibitem[DL14]{DabkowskiLock}
Michael~G. Dabkowski and Michael~T. Lock, \emph{On {K}\"ahler conformal
  compactifications of ${U}(n)$-invariant {ALE} spaces}, to appear in Ann.
  Global Anal. Geom., 2014.

\bibitem[DV64]{DuVal}
Patrick Du~Val, \emph{Homographies, quaternions and rotations}, Oxford
  Mathematical Monographs, Clarendon Press, Oxford, 1964.

\bibitem[EH79]{EguchiHanson}
Tohru Eguchi and Andrew~J. Hanson, \emph{Self-dual solutions to {E}uclidean
  gravity}, Ann. Physics \textbf{120} (1979), no.~1, 82--106.

\bibitem[Flo91]{Floer}
Andreas Floer, \emph{Self-dual conformal structures on {$l{\bf C}{\rm P}\sp
  2$}}, J. Differential Geom. \textbf{33} (1991), no.~2, 551--573.

\bibitem[FP04]{FalbelPaupert}
Elisha Falbel and Julien Paupert, \emph{Fundamental domains for finite
  subgroups in {$U(2)$} and configurations of {L}agrangians}, Geom. Dedicata
  \textbf{109} (2004), 221--238.

\bibitem[Gau09]{Gauduchon}
Paul Gauduchon, \emph{Hirzebruch surfaces and weighted projective planes},
  Riemannian topology and geometric structures on manifolds, Progr. Math., vol.
  271, Birkh\"auser Boston, Boston, MA, 2009, pp.~25--48.

\bibitem[GH78]{GibbonsHawking}
G.~W. Gibbons and S.~W. Hawking, \emph{Gravitational multi-instantons}, Physics
  Letters B \textbf{78} (1978), no.~4, 430--432.

\bibitem[Hir53]{Hirzebruch1953}
Friedrich Hirzebruch, \emph{\"{U}ber vierdimensionale {R}iemannsche {F}l\"achen
  mehrdeutiger analytischer {F}unktionen von zwei komplexen
  {V}er\"anderlichen}, Math. Ann. \textbf{126} (1953), 1--22.

\bibitem[Hon13]{HondaOn}
Nobuhiro Honda, \emph{Deformation of {L}e{B}run's {ALE} metrics with negative
  mass}, Comm. Math. Phys. \textbf{322} (2013), no.~1, 127--148.

\bibitem[Hon14]{Honda_2014}
\bysame, \emph{Scalar flat {K}\"ahler metrics on affine bundles over
  {$\mathbb{CP}^1$}}, SIGMA Symmetry Integrability Geom. Methods Appl.
  \textbf{10} (2014), Paper 046, 25.

\bibitem[HP78]{HawkingPope}
S.~W. Hawking and C.~N. Pope, \emph{Symmetry breaking by instantons in
  supergravity}, Nuclear Phys. B \textbf{146} (1978), no.~2, 381--392.

\bibitem[Joy91]{Joyce1991}
Dominic Joyce, \emph{The hypercomplex quotient and the quaternionic quotient},
  Math. Ann. \textbf{290} (1991), no.~2, 323--340.

\bibitem[Joy95]{Joyce1995}
\bysame, \emph{Explicit construction of self-dual {$4$}-manifolds}, Duke Math.
  J. \textbf{77} (1995), no.~3, 519--552.

\bibitem[Kaw78]{Kawamata}
Yujiro Kawamata, \emph{On deformations of compactifiable complex manifolds},
  Math. Ann. \textbf{235} (1978), no.~3, 247--265.

\bibitem[Kro89a]{Kronheimer}
P.~B. Kronheimer, \emph{The construction of {A}{L}{E} spaces as
  hyper-{K}\"ahler quotients}, J. Differential Geom. \textbf{29} (1989), no.~3,
  665--683.

\bibitem[Kro89b]{Kronheimer2}
\bysame, \emph{A {T}orelli-type theorem for gravitational instantons}, J.
  Differential Geom. \textbf{29} (1989), no.~3, 685--697.

\bibitem[KS01]{KovalevSinger}
A.~Kovalev and M.~Singer, \emph{Gluing theorems for complete anti-self-dual
  spaces}, Geom. Funct. Anal. \textbf{11} (2001), no.~6, 1229--1281.

\bibitem[Lam03]{Lam}
T.~Y. Lam, \emph{Hamilton's quaternions}, Handbook of algebra, {V}ol. 3, Handb.
  Algebr., vol.~3, Elsevier/North-Holland, Amsterdam, 2003, pp.~429--454.

\bibitem[Lau73]{Laufer1973}
Henry~B. Laufer, \emph{Taut two-dimensional singularities}, Math. Ann.
  \textbf{205} (1973), 131--164.

\bibitem[LeB88]{LeBrunnegative}
Claude LeBrun, \emph{Counter-examples to the generalized positive action
  conjecture}, Comm. Math. Phys. \textbf{118} (1988), no.~4, 591--596.

\bibitem[Li14]{ChiLi}
Chi Li, \emph{On sharp rates and analytic compactifications of asymptotically
  conical {K}\"ahler metrics}, arXiv.org:1405.2433, 2014.

\bibitem[LM08]{LeBrunMaskit}
Claude LeBrun and Bernard Maskit, \emph{On optimal 4-dimensional metrics}, J.
  Geom. Anal. \textbf{18} (2008), no.~2, 537--564.

\bibitem[LS94a]{LeBrun_Simanca}
C.~LeBrun and S.~R. Simanca, \emph{Extremal {K}\"ahler metrics and complex
  deformation theory}, Geom. Funct. Anal. \textbf{4} (1994), no.~3, 298--336.

\bibitem[LS94b]{LeBrunSinger}
Claude LeBrun and Michael Singer, \emph{A {K}ummer-type construction of
  self-dual {$4$}-manifolds}, Math. Ann. \textbf{300} (1994), no.~1, 165--180.

\bibitem[McC02]{MCC}
Darryl McCullough, \emph{Isometries of elliptic 3-manifolds}, J. London Math.
  Soc. (2) \textbf{65} (2002), no.~1, 167--182.

\bibitem[McK80]{McKay}
John McKay, \emph{Graphs, singularities, and finite groups}, The {S}anta {C}ruz
  {C}onference on {F}inite {G}roups ({U}niv. {C}alifornia, {S}anta {C}ruz,
  {C}alif., 1979), Proc. Sympos. Pure Math., vol.~37, Amer. Math. Soc.,
  Providence, R.I., 1980, pp.~183--186.

\bibitem[Nak90]{Nakajima}
Hiraku Nakajima, \emph{Self-duality of {A}{L}{E} {R}icci-flat $4$-manifolds and
  positive mass theorem}, Recent topics in differential and analytic geometry,
  Academic Press, Boston, MA, 1990, pp.~385--396.

\bibitem[OR70]{OrlikRaymond}
Peter Orlik and Frank Raymond, \emph{Actions of the torus on {$4$}-manifolds.
  {I}}, Trans. Amer. Math. Soc. \textbf{152} (1970), 531--559.

\bibitem[RS05]{RollinSinger}
Yann Rollin and Michael Singer, \emph{Non-minimal scalar-flat {K}\"ahler
  surfaces and parabolic stability}, Invent. Math. \textbf{162} (2005), no.~2,
  235--270.

\bibitem[R{\c{S}}15]{Suvaina}
R.~R{\u{a}}sdeaconu and I.~{\c{S}}uvaina, \emph{A{LE} {R}icci-flat {K}\"ahler
  surfaces and weighted projective spaces}, Ann. Global Anal. Geom. \textbf{47}
  (2015), no.~2, 117--134.

\bibitem[Sco83]{Scott}
Peter Scott, \emph{The geometries of {$3$}-manifolds}, Bull. London Math. Soc.
  \textbf{15} (1983), no.~5, 401--487.

\bibitem[Sei33]{Seifert}
H.~Seifert, \emph{Topologie {D}reidimensionaler {G}efaserter {R}\"aume}, Acta
  Math. \textbf{60} (1933), no.~1, 147--238.

\bibitem[Siu69]{Siu}
Yum-tong Siu, \emph{Analytic sheaf cohomology groups of dimension {$n$} of
  {$n$}-dimensional noncompact complex manifolds}, Pacific J. Math. \textbf{28}
  (1969), 407--411.

\bibitem[Ste08]{Stekolshchik}
R.~Stekolshchik, \emph{Notes on {C}oxeter transformations and the {M}c{K}ay
  correspondence}, Springer Monographs in Mathematics, Springer-Verlag, Berlin,
  2008.

\bibitem[Str10]{Streets}
Jeffrey Streets, \emph{Asymptotic curvature decay and removal of singularities
  of {B}ach-flat metrics}, Trans. Amer. Math. Soc. \textbf{362} (2010), no.~3,
  1301--1324.

\bibitem[Sz{\'e}12]{GaborI}
G{\'a}bor Sz{\'e}kelyhidi, \emph{On blowing up extremal {K}\"ahler manifolds},
  Duke Math. J. \textbf{161} (2012), no.~8, 1411--1453.

\bibitem[Sz{\'e}15]{GaborII}
\bysame, \emph{Blowing up extremal {K}\"ahler manifolds {II}}, Invent. Math.
  \textbf{200} (2015), no.~3, 925--977.

\bibitem[TS31]{Threlfall-Seifert_1}
W.~Threlfall and H.~Seifert, \emph{Topologische {U}ntersuchung der
  {D}iskontinuit\"atsbereiche endlicher {B}ewegungsgruppen des
  dreidimensionalen sph\"arischen {R}aumes}, Math. Ann. \textbf{104} (1931),
  no.~1, 1--70.

\bibitem[TS33]{Threlfall-Seifert_2}
\bysame, \emph{Topologische {U}ntersuchung der {D}iskontinuit\"atsbereiche
  endlicher {B}ewegungsgruppen des dreidimensionalen sph\"arischen {R}aumes
  ({S}chlu\ss)}, Math. Ann. \textbf{107} (1933), no.~1, 543--586.

\bibitem[TV05]{TV2}
Gang Tian and Jeff Viaclovsky, \emph{Moduli spaces of critical {R}iemannian
  metrics in dimension four}, Adv. Math. \textbf{196} (2005), no.~2, 346--372.

\bibitem[Via10]{ViaclovskyFourier}
Jeff Viaclovsky, \emph{Monopole metrics and the orbifold {Y}amabe problem},
  Annales de L'Institut Fourier \textbf{60} (2010), no.~7, 2503--2543.

\bibitem[Wah75]{Wahl1975}
Jonathan~M. Wahl, \emph{Vanishing theorems for resolutions of surface
  singularities}, Invent. Math. \textbf{31} (1975), no.~1, 17--41.

\bibitem[Wri11]{Wright}
Dominic Wright, \emph{Compact anti-self-dual orbifolds with torus actions},
  Selecta Math. (N.S.) \textbf{17} (2011), no.~2, 223--280.

\end{thebibliography}
\end{document}